\documentclass[12pt,reqno]{amsart}
\usepackage[margin=1in,footskip=0.25in]{geometry}
\usepackage{amsmath,amssymb,amsfonts,hyperref,graphicx,tikz,float}
\hypersetup{hidelinks}

\usepackage{enumitem}
\makeatletter
\@namedef{subjclassname@2020}{2020 Mathematics Subject Classification}
\makeatother

\newtheorem{theorem}{Theorem}[section]
\newtheorem{lemma}[theorem]{Lemma}

\newtheorem{definition}[theorem]{Definition}

\newtheorem{corollary}[theorem]{Corollary}

\newtheorem{remark}[theorem]{Remark}

\newtheorem{observation}[theorem]{Observation}

\theoremstyle{definition}
\def\A{\mathcal{A}}

\def \h{\mathcal{H}}
\def \G{\mathcal{G}}
\def \g{\mathcal{G}}
\def \T{\mathcal{T}}

\def\x{\mathbf{x}}%\boldsymbol

\begin{document}

\title[Perturbation of the largest matching root]
{\text{\small Perturbation of the largest matching root of hypergraphs}}

%\date{\today}

\keywords{Matching polynomial, largest matching root, hypergraph, shifting operation}

\author{Jiang-Chao Wan $^1$, Yi Wang $^2$}
\address{1. School of Mathematics and Statistics, Hefei University, Hefei 230601, Anhui, China \\
	2. School of Mathematical Sciences, Anhui University, Hefei 230601, Anhui, China}
\email{wanjc@stu.ahu.edu.cn, wangy@ahu.edu.cn}
\thanks{{\it Corresponding author.} Yi Wang}
\thanks{{\it Funding.} Supported by the National Natural Science Foundation of China (No. 12331012, 12571360)}

\maketitle

\begin{abstract}
The largest matching root of a $k$-graph is the largest real root of its matching polynomial,
which is equal to the maximum modulus of all the zeros of the matching polynomial.
In this paper, we investigate the perturbation of the largest matching root of $k$-graphs.
We determine all $k$-graphs
whose largest matching root attains the maximum among all $k$-cacti and linear $k$-cacti with a given number of cycles and edges,
where a $k$-cactus is a $k$-graph in which every two distinct cycles have at most one vertex in common.
 To achieve this, we prove that the celebrated shifting operation of $k$-graphs, introduced by Erd\H{o}s, Ko and Rado,
does not decrease the largest matching root.
This result extends a classical result by
Csikv\'ari (Electron. J. Combin. {\bf 18} (2011) $\#$P182)
stating that
the Kelmans transformation does not decrease the largest matching root of graphs.
\end{abstract}

\section{Introduction}\label{section1}
The problem of determining
the graphs for which the largest matching root achieves the maximum
in a given family of graphs
can be traced back to the celebrated paper by Lov\'asz and Pelik\'an~\cite{LovaszPe} in 1973.
They~\cite{LovaszPe} %$,  Lov\'asz and Pelik\'an
 determined the first two largest spectral radii of trees with a given number of vertices.
Since then, many researchers studied the problem of determining
the graphs for which the largest matching root achieves the maximum in a given family of graphs, see, e.g.~\cite{Liu,ZhangCY,ZhangLY}.
In this paper, we extend the problem to uniform hypergraphs.
Before stating the main results of this paper,
we first introduce the necessary background on the matching polynomials of graphs and uniform hypergraphs.

\subsection{Background}
A {\it $k$-uniform hypergraph} (or {\it $k$-graph} for short) $\h=(V(\h), E(\h))$ consists of a vertex set $V(\h)$ and an edge set $E(\h)$,
where each edge $e\in E(\h)$ is a $k$-element subset of $V(\h)$.
Clearly, $2$-graph are the ordinary graph.
A {\em matching} in $\h$ is a set of vertex-disjoint edges, and an $r$-matching is a matching having $r$ edges.
To study the spectral radius of the adjacency tensor of $k$-trees, Su et al.~\cite{Suejc} introduced the {\em matching polynomial} of a $k$-graph $\h$ as follows:
\begin{equation}\label{definitionmp}
\mu(\h,x)=\sum_{r\geq 0}(-1)^rp(\h,r) x^{|V(\h)|-kr},
\end{equation}
where $p(\h,r)$ denotes the number of $r$-matchings of $\h$.
%, with the convention that $p(\h,0)=1$.
The case of $k=2$ in~\eqref{definitionmp} is the classical matching polynomial of a graph introduced by Heilmann and Lieb~\cite{Heilmann1,Heilmann}.
We refer the reader to the textbook~\cite{Godsil} for more history on the matching polynomial theory.

Denote by $\lambda(\h)$ the maximum modulus of all the zeros of the matching polynomial of a $k$-graph $\h$.
The Heilmann--Lieb theorem says that the zeros of the matching polynomial of a graph are real and symmetrically distributed about the origin.
Thus, $\lambda(G)$ is equal to the largest root of $\mu(G, x)$ for a graph $G$.
However, the case of $k\geq 3$ seems to be a little more complex since the matching polynomial of a $k$-graph with $k\geq 3$ must contain a non-real zero \cite{WanWF}.
Zhang et al.~\cite{Zhang} proved that for a $k$-tree $\T$ the largest root of $\mu(\T,x)$ is equal to the spectral radius of the adjacency tensor of $\T$.
Based on this fact and some properties of the matching polynomial,
Su et al.~\cite{Suejc}
determined the first $\lfloor d/2\rfloor+1$ largest spectral radii of the adjacency tensor of $k$-trees with size $m$ and diameter $d$.

Recently, the authors \cite{WanWF} established a hypergraph Heilmann--Lieb theorem.
In particular, they showed that for a $k$-graph $\h$, $\lambda(\h)$ is a simple root of $\mu(\h,x)$,
and hence that $\lambda(\h)$ is the largest root of $\mu(\h,x)$ (see \cite[Theorem 1.2]{WanWF} for details).
We simply call $\lambda(\h)$ the {\it largest matching root} of $\h$.
A natural question is to determine the $k$-graphs for which the largest matching root achieves the maximum in a given family of $k$-graphs.

\subsection{Main results}\label{Mainsubsection}

Let $\mathcal{P} = (v_0, e_1, v_1, \dots, e_\ell, v_\ell)$
be an alternating sequence of vertices and edges in $\h$ such that $e_1, \ldots, e_\ell$ are pairwise distinct and $v_{i-1}, v_i\in e_i$ for all $i =1, \ldots, \ell$.
The sequence $\mathcal{P}$ is called a {\it path} in $\h$
if the vertices $v_0, v_1, \ldots, v_\ell$ are pairwise distinct,
and it is called a {\it cycle} in $\h$ if $v_1, \ldots, v_\ell$ are pairwise distinct, $\ell\geq 2$, and $v_0= v_\ell$.
Denote by $V(\mathcal{P})$ the set $\cup_{i=1}^\ell e_i$.
A connected $k$-graph $\h$ is called a {\it $k$-cactus} if every two distinct cycles of $\h$ have at most one vertex in common,
that is, $|V(\mathcal{C}_1) \cap V(\mathcal{C}_2)|\leq 1$ for  every two distinct cycles $\mathcal{C}_1$ and $\mathcal{C}_2$ of $\h$.
For integers $k\geq 3$ {\rm(}resp. $k\geq 2${\rm)}, $m \geq 1$ and $t\geq 0$,
we denote by $\mathfrak{Ca}_k(m,t)$ {\rm(}resp. $\mathfrak{LCa}_k(m,t)${\rm)} the set of all {\rm(}resp. linear{\rm)} $k$-cacti with $m$ edges and $t$ cycles.
Here, a $k$-graph $\h$ is said to be {\it linear} if any two distinct edges of $\h$ intersect at most one vertex.

For a given family $\mathfrak{F}$ of $k$-graphs,
a $k$-graph $\h\in\mathfrak{F}$ is called {\it maximizing} in $\mathfrak{F}$ if $\lambda(\h)$ attains the maximum among all $k$-graphs in $\mathfrak{F}$.
The aim of this paper is to determine all maximizing $k$-graphs in $\mathfrak{Ca}_k(m,t)$ and $\mathfrak{LCa}_k(m,t)$,
respectively.

\begin{figure}[H]
	\begin{tikzpicture}[scale=0.68]
	\begin{scope}[shift={(-16cm,0)}]
	%%%%%%%%%%%%%%%%%%%%%%%%%%%%%%%%%%%%%%%%%%%%%%%%%%%%%% %%%%%%%%%%%%%%%%%%%%%%%%%%%%%%%%%%%%%%%%%%%%%%%%%%%%%%
	
	\filldraw[fill=black,draw=black] (12, 0)  circle [radius=0.1] ;

	\filldraw[fill=black,draw=black] (10 , 2.5)  circle [radius=0.07];
	
	\filldraw[fill=black,draw=black] (14, 2.5)  circle [radius=0.07];

	%%%%%%%%%%%%
	\draw[thick] (11.1,-1.23) ellipse[x radius=0.2,y radius=2,rotate = -36];

	\draw[thick] (11.6,-1.5) ellipse[x radius=0.2,y radius=2,rotate = -16];
	
	\draw[thick] (12.9,-1.23) ellipse[x radius=0.2,y radius=2,rotate = 36];

	%%%%%%%%%%%
	\filldraw[fill=black,draw=black] (11.3,     -1)  circle [radius=0.07];
	\filldraw[fill=black,draw=black] (11.75,     -1)  circle [radius=0.07];
	\filldraw[fill=black,draw=black] (12.7,     -1)  circle [radius=0.07];

	%%%%%%%%%%%
	\filldraw[fill=black,draw=black] (10.35,     -2.3)  circle [radius=0.07];
	\filldraw[fill=black,draw=black] (11.35,     -2.4)  circle [radius=0.07];
	\filldraw[fill=black,draw=black] (13.65,     -2.3)  circle [radius=0.07];

	%%%%%%%%%%%%%%%%%%%%
	\filldraw[fill=black,draw=black] (11,  -1.4)  circle [radius=0.03];
	\filldraw[fill=black,draw=black] (10.85,  -1.6)  circle [radius=0.03];
	\filldraw[fill=black,draw=black] (10.7,  -1.8)  circle [radius=0.03];

	%%%%%%%%%%%%%%%%%%%%
	\filldraw[fill=black,draw=black] (11.6,  -1.5)  circle [radius=0.03];
	\filldraw[fill=black,draw=black] (11.55,  -1.7)  circle [radius=0.03];
	\filldraw[fill=black,draw=black] (11.5,  -1.9)  circle [radius=0.03];

	%%%%%%%%%%%%%%%%%%%%
	\filldraw[fill=black,draw=black] (13,  -1.4)  circle [radius=0.03];
	\filldraw[fill=black,draw=black] (13.15,  -1.6)  circle [radius=0.03];
	\filldraw[fill=black,draw=black] (13.3,  -1.8)  circle [radius=0.03];

	%%%%%%%%%%%%%%%%%%%%
	\filldraw[fill=black,draw=black] (12,  -2.6)  circle [radius=0.03];
	\filldraw[fill=black,draw=black] (12.55,  -2.6)  circle [radius=0.03];
	\filldraw[fill=black,draw=black] (13.1,  -2.6)  circle [radius=0.03];

	\draw[<->,thick , dotted] (9.6,  -3.3) to [bend right] (14.4,-3.1);
	\node at (12,-4.5) {$m-2t$ edges};
	\node at (12,-5.2) {$\h^{(k)}_{m,t}$};
	
	\draw[<->,thick , dotted] (9.6,   3 ) to [bend left] (14.4, 3 );
	\node at (12,4.1) {$t$ cycles of length $2$};			
	
	\draw[thick,draw=red] (12.2,-0.2) to[out=60,in=-90] (12, 1.3) to[out=90,in=110](9.8, 2.5) to[out=-70,in=110] (11.5, 1.3) to[out=-70,in=-140] (12.2,-0.2);
	
	\draw[thick,draw=blue] (12.2, 0) to[out=-60,in=-70] (10, 1 )  to[out=110,in=-120](9.9, 2.7) to[out=60,in=120] (10.5, 1.3) to[out=-60 ,in=120] (12.2,0) ;
	
	\draw[thick,draw=green] (12 ,-0.2) to [out=-180,in=-130 ] (12.4, 1.5)to [out=50,in= 80 ] (14.2,2.4) to [out=-100,in=70 ] (13, 1.5) to [out=-110,in= 50 ] (12 ,-0.2);

	\draw[thick,draw=pink] (11.9,-0.2) to [out=10,in=-100] (14 , 1.5) to[out=80 ,in=30] (13.9,2.7)to [out=-150,in=90] (13.5 , 1.5)to [out=-90,in=170] (11.9,-0.2);

	\filldraw[fill=black,draw=black] (11 , 0.5)  circle [radius=0.07];
	\filldraw[fill=black,draw=black] (10.2, 1.2)  circle [radius=0.03];
	\filldraw[fill=black,draw=black] (10.1, 1.6)  circle [radius=0.03];
	\filldraw[fill=black,draw=black] (10.5, 0.9)  circle [radius=0.03];

	\filldraw[fill=black,draw=black] (11.8, 1)  circle [radius=0.07];
	
	\filldraw[fill=black,draw=black] (10.9, 2.1)  circle [radius=0.03];
	\filldraw[fill=black,draw=black] (11.35, 1.85)  circle [radius=0.03];
	\filldraw[fill=black,draw=black] (11.65, 1.5)  circle [radius=0.03];

	\filldraw[fill=black,draw=black] (12.5, 1)  circle [radius=0.07];

	\filldraw[fill=black,draw=black] (13.35, 2.05)  circle [radius=0.03];
	\filldraw[fill=black,draw=black] (13, 1.85)  circle [radius=0.03];
	\filldraw[fill=black,draw=black] (12.75, 1.5)  circle [radius=0.03];

	\filldraw[fill=black,draw=black] (13.2, 0.6)  circle [radius=0.07];

	\filldraw[fill=black,draw=black] (13.8, 1.5)  circle [radius=0.03];
	\filldraw[fill=black,draw=black] (13.7, 1.2)  circle [radius=0.03];
	\filldraw[fill=black,draw=black] (13.55, 0.95)  circle [radius=0.03];
	
	\end{scope}

	%%%%%%%%%%%%%%%%%%%%%%%%%%%%%%%%%%%%%%%%%%%%%%%%% %%%%%%%%%%%%%%%%%%%%%%%%%%%%%%%%%%%%%%%%%%%%%%%%%
	\begin{scope}%[shift={(6cm,0)}]
	
	\draw[thick] (4.6,1 ) ellipse[x radius=0.2,y radius=1.7,rotate = -30];
	
	\draw[thick] (3.4 ,1 ) ellipse[x radius=0.2,y radius=1.7,rotate = 30];

	%%%%%%%%%%%%%%%%%%%%%%%%%%%%%%%%%%%%%%%%%%%%%%%%%%%%%%%%%%%%%
	%%%%%%%%%%%%%%%%%%%%%%%%%%%%%%%%%%%%%%%%%%%%%%%%%%%%%%%%%%%%%
	
	\draw[thick] (3 ,0.45) ellipse[x radius=0.2,y radius=1.7,rotate = 66];
	\draw[thick] (5 ,0.45) ellipse[x radius=0.2,y radius=1.7,rotate = -66];

	\filldraw[fill=black,draw=black] (3.3, 0.3)  circle [radius=0.07];
	\filldraw[fill=black,draw=black] (4.7, 0.3)  circle [radius=0.07];

	\filldraw[fill=black,draw=black] (2, 0.9)  circle [radius=0.07];
	\filldraw[fill=black,draw=black] (6, 0.9)  circle [radius=0.07];
	
	\filldraw[fill=black,draw=black] (2.4,   0.7)  circle [radius=0.03];
	\filldraw[fill=black,draw=black] (5.6,   0.7)  circle [radius=0.03];

	\filldraw[fill=black,draw=black] (2.65,   0.6)  circle [radius=0.03];
	\filldraw[fill=black,draw=black] (5.35,   0.6)  circle [radius=0.03];

	\filldraw[fill=black,draw=black] (2.9,   0.5)  circle [radius=0.03];
	\filldraw[fill=black,draw=black] (5.1,   0.5)  circle [radius=0.03];
	
	%%%%%%%%%%%%%%%%%%%%%%%%%%%%%%%%%%%%%%%%%%%%%%%%%%%%%%%%%

	\draw[thick] (2.4,1.4) ellipse[x radius=0.2,y radius=1.4,rotate = -38];
	
	\draw[thick] (5.6,1.4) ellipse[x radius=0.2,y radius=1.4,rotate = 38];

	\filldraw[fill=black,draw=black] (2.42,   1.45)  circle [radius=0.03];
	\filldraw[fill=black,draw=black] (5.58,   1.45)  circle [radius=0.03];

	\filldraw[fill=black,draw=black] (2.55,   1.6)  circle [radius=0.03];
	\filldraw[fill=black,draw=black] (5.45,   1.6)  circle [radius=0.03];

	\filldraw[fill=black,draw=black] (2.3,   1.3)  circle [radius=0.03];
	\filldraw[fill=black,draw=black] (5.7,   1.3)  circle [radius=0.03];
	%%%%%%%%%%%%%%%%%%%%%%%%%%%%%%%%%%%%%%%%%%%%%%%%%%%%%%%%
	
	\filldraw[fill=black,draw=black] (2.84, 2)  circle [radius=0.07];
	
	\filldraw[fill=black,draw=black] (5.16, 2)  circle [radius=0.07];
	
	\filldraw[fill=black,draw=black] (3.6, 0.7)  circle [radius=0.07];
	
	\filldraw[fill=black,draw=black] (4.4, 0.7)  circle [radius=0.07];

	%%%%%%%%%%%%%%%%%%%%
	\filldraw[fill=black,draw=black] (3.4,   1)  circle [radius=0.03];
	\filldraw[fill=black,draw=black] (4.6,   1)  circle [radius=0.03];
	
	\filldraw[fill=black,draw=black] (3.28,   1.2)  circle [radius=0.03];
	\filldraw[fill=black,draw=black] (3.16,   1.4)  circle [radius=0.03];

	\filldraw[fill=black,draw=black] (4.73,   1.2)  circle [radius=0.03];
	\filldraw[fill=black,draw=black] (4.84,   1.4)  circle [radius=0.03];

	%%%%%%%%%%%%
	\filldraw[fill=black,draw=black](4, 0) circle [radius=0.1];

	%%%%%%%%%%%%%%%%%%%
	\filldraw[fill=black,draw=black] (3.6,   2)  circle [radius=0.03];
	\filldraw[fill=black,draw=black] (4,   2)  circle [radius=0.03];
	\filldraw[fill=black,draw=black] (4.4,  2)  circle [radius=0.03];
	
	%%%%%%%%%%%%%%%%%%%%%%%%%%%%%%%%%%%%%%%%%%%%%%%%%%%%%%%%%
	\draw[thick] (3.1,-1.23) ellipse[x radius=0.2,y radius=2,rotate = -36];
	
	\draw[thick] (3.6,-1.5) ellipse[x radius=0.2,y radius=2,rotate = -16];
	
	\draw[thick] (4.9,-1.23) ellipse[x radius=0.2,y radius=2,rotate = 36];

	%%%%%%%%%%%%
	\filldraw[fill=black,draw=black] (3.3,     -1)  circle [radius=0.07];
	\filldraw[fill=black,draw=black] (3.75,     -1)  circle [radius=0.07];
	\filldraw[fill=black,draw=black] (4.7,     -1)  circle [radius=0.07];

	%%%%%%%%%%%
	\filldraw[fill=black,draw=black] (2.35,     -2.3)  circle [radius=0.07];
	\filldraw[fill=black,draw=black] (3.35,     -2.4)  circle [radius=0.07];
	\filldraw[fill=black,draw=black] (5.65,     -2.3)  circle [radius=0.07];

	%%%%%%%%%%%%%%%%%%%
	\filldraw[fill=black,draw=black] (3,  -1.4)  circle [radius=0.03];
	\filldraw[fill=black,draw=black] (2.85,  -1.6)  circle [radius=0.03];
	\filldraw[fill=black,draw=black] (2.7,  -1.8)  circle [radius=0.03];

	%%%%%%%%%%%%%%%%%%%%%%%%%%%%%%%%
	\filldraw[fill=black,draw=black] (3.6,  -1.5)  circle [radius=0.03];
	\filldraw[fill=black,draw=black] (3.55,  -1.7)  circle [radius=0.03];
	\filldraw[fill=black,draw=black] (3.5,  -1.9)  circle [radius=0.03];

	%%%%%%%%%%%%%%%%%%%%di san bian xu xian%%%%%%%%%%%%%%%%%%%%%%%%%%%%%%%%
	\filldraw[fill=black,draw=black] (5,-1.4)circle[radius=0.03];
	\filldraw[fill=black,draw=black] (5.15,-1.6)circle[radius=0.03];
	\filldraw[fill=black,draw=black] (5.3,-1.8)circle[radius=0.03];

	\filldraw[fill=black,draw=black] (4,  -2.6)  circle [radius=0.03];
	\filldraw[fill=black,draw=black] (4.55,  -2.6)  circle [radius=0.03];
	\filldraw[fill=black,draw=black] (5.1,  -2.6)  circle [radius=0.03];

	\draw[<->,thick,dotted](1.6,-3.3) to [bend right](6.4,-3.1);
	\node at (4,-4.5) {$m-3t$ edges};
	\node at (4,-5.1) {$\mathcal{\mathcal{L}}^{(k)}_{m,t}$};

	\draw[<->,thick, dotted](1.6,2.3) to [bend left](6.4,2.3);
	\node at (4, 3.4) {$t$ cycles of length $3$};
	\end{scope}
	%%%%%%%%%%%% shang liang dian %%%%%%%%%%%%
	%\filldraw[fill=black,draw=black] (2 , 2.5)  circle [radius=0.07];
	%\filldraw[fill=black,draw=black] (6, 2.5)  circle [radius=0.07];
	\end{tikzpicture}
	\caption{The structures of $\h^{(k)}_{m,t}$ and $\mathcal{\mathcal{L}}^{(k)}_{m,t}$.}\label{figure3}
\end{figure}

We are now ready to state the main result of this paper.

\begin{theorem}\label{MainresutCactus}
Let $k\geq 3$ {\rm(}resp. $k\geq 2${\rm)}, $m\geq 1$, and $t\geq 0$ be integers.
Let $\h^{(k)}_{m,t}$
{\rm(}resp. $\mathcal{L}^{(k)}_{m,t}${\rm)} be the $k$-graph illustrated in Figure~\ref{figure3}.
Then the following statements hold.
\begin{enumerate}[label={\rm (\alph*)}]% (\roman*)[label=\arabic*)]
		
\item For $(m,t)\neq (3,1)$, $\h^{(k)}_{m,t}$ is the unique maximizing $k$-graph in $\mathfrak{Ca}_k(m,t)$.

\item For $(m,t)=(3,1)$, $\h^{(k)}_{3,1}$ and $\mathcal{L}^{(k)}_{3,1}$ are all maximizing $k$-graphs in $\mathfrak{Ca}_k(m,t)$.

\item The $k$-graph $\mathcal{L}^{(k)}_{m,t}$ is the unique maximizing $k$-graph in $\mathfrak{LCa}_k(m,t)$.
		
\end{enumerate}
\end{theorem}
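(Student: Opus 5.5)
We will compare $k$-graphs through their matching polynomials, using the standard toolkit: the vertex recursion $\mu(\h,x)=x\,\mu(\h-v,x)-\sum_{e\ni v}\mu(\h-e^{*},x)$, where $\h-e^{*}$ deletes all vertices of $e$, and the edge recursion $\mu(\h,x)=\mu(\h-e,x)-\mu(\h-V(e),x)$. The basic comparison tool is: if $\mu(\g_1,x)\ge \mu(\g_2,x)$ for all $x\ge \lambda(\g_1)$, then $\lambda(\g_1)\le\lambda(\g_2)$. Together with the hypergraph Heilmann--Lieb theorem of \cite{WanWF}, this gives that $\lambda$ is strictly monotone under taking proper subgraphs (spanning or not) of connected $k$-graphs.

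The central step is to prove that the Erd\H{o}s--Ko--Rado shifting $S_{u\to v}$ does not decrease $\lambda$. Recall that this operation replaces each edge $e\ni u$, $e\not\ni v$ by $(e\setminus\{u\})\cup\{v\}$ whenever that set is not already an edge. Moreover, we will show $\lambda$ strictly increases unless the shift is trivial up to isomorphism.

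To prove this, we will expand $\mu(\h,x)-\mu(S_{u\to v}\h,x)$ by applying the vertex recursion at $u$ and at $v$. The matchings avoiding both $u$ and $v$ are identical in the two graphs. Matchings that use exactly one of $u,v$ are paired bijectively. What remains is a difference of the form
$$\sum_{e\ni u,\,f\ni v}\bigl(\mu(\h-e^{*}-f^{*},x)-\mu(\h'-e'^{*}-f'^{*},x)\bigr),$$
which we expect to rewrite as a nonnegative combination of products of matching polynomials of subgraphs. We then compare these to $\mu$ of the shifted graph using the interlacing-type inequality $\mu(\g-S,x)\,\mu(\g,x)^{-1}$ increasing in $x$ beyond $\lambda(\g)$. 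This is the analogue of Csikv\'ari's Kelmans argument.

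This identity is the main obstacle. Csikv\'ari's proof uses $k=2$, where the neighbourhoods are sets of vertices. Here the edges containing $u$ carry $(k-1)$-sets, and the shifted edges may collide with existing edges containing $v$. I would first try to handle this by partitioning the edges at $u$ into ``shiftable'' ones and ``blocked'' ones. Blocked edges have a twin at $v$ and contribute symmetrically, so cancellation should reduce the problem to the shiftable edges.

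Given the shifting lemma, the extremal results follow from a reduction argument.

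\textbf{(a) and (b).} Take a maximizer $\h\in\mathfrak{Ca}_k(m,t)$. We repeatedly shift toward a single vertex $w$, checking that each shift preserves connectivity, the cactus property, and the number of cycles. Where a shift would merge cycles, we instead use a ``cycle-moving'' variant built from the same inequality. Eventually every edge contains $w$, and cycles become pairs of edges sharing two vertices. This yields $\h^{(k)}_{m,t}$ (cycles of length $2$ at $w$ plus pendant edges). The one exception arises when an alternative local structure gives the same polynomial; for $(m,t)=(3,1)$ we compute $\mu(\h^{(k)}_{3,1})=\mu(\mathcal{L}^{(k)}_{3,1})$ directly, since both have no $2$-matchings.

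\textbf{(c).} In the linear case, length-$2$ cycles are forbidden. The same shifting scheme therefore ends with all edges at $w$ plus $t$ triangles, each having one edge off $w$, which is $\mathcal{L}^{(k)}_{m,t}$.

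Uniqueness comes from strictness. Every non-extremal cactus admits a shift that changes the isomorphism type, and that shift strictly increases $\lambda$.
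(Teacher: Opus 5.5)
\textbf{There are genuine gaps.} Your uniqueness mechanism rests on the claim that a shift strictly increases $\lambda$ unless it is trivial up to isomorphism. For $k\ge 3$ this is false, and your own $(3,1)$ computation refutes it. Take the linear triangle $\mathcal{L}^{(k)}_{3,1}=(v_0,e_1,v_1,e_2,v_2,e_3,v_0)$ and shift $v_1$ to $v_0$. This replaces $e_2$ by $(e_2\setminus\{v_1\})\cup\{v_0\}$, which forms a $2$-cycle with $e_3$, while $e_1$ becomes a pendant edge at $v_0$. The result is exactly $\h^{(k)}_{3,1}$: not isomorphic to $\mathcal{L}^{(k)}_{3,1}$, yet with the same polynomial $x^{3k-3}-3x^{2k-3}$. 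The remark after Theorem~\ref{KelmTranshypergraph} gives further examples.

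What can be proved is strictness under a checkable hypothesis: some $e\in E_\h(u)\setminus E_\h(v)$ with $\mathbb{S}_{uv}(\h)-e$ a proper subgraph of $\h-e$. A convenient case is disjoint $e\ni u$, $f\ni v$ with $\mathbb{S}_{uv}(f)\notin E_\h(u)$ (Corollary~\ref{KelmTransHGCor}). Every reduction step must then be certified individually. This is exactly where $m\ge 4$ enters: a triangle collapses to a $2$-cycle with strict gain only because a fourth edge $f$ meets a corner $v_0$ and is disjoint from $e_2$. Your sketch never explains why triangles survive only when $m=3$.

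The inequality itself is also only a hoped-for identity. The paper proves it by induction on $|E(\h)|$ using the edge recursion at an edge $e\in E_\h(u)\setminus E_\h(v)$. Such an edge is fixed by the shift, so $\mathbb{S}_{uv}(\h\setminus e)=\mathbb{S}_{uv}(\h)\setminus e$ and induction applies. Meanwhile $\mathbb{S}_{uv}(\h)-e$ is a spanning subgraph of $\h-e$. A separate lemma places $\lambda(\mathbb{S}_{uv}(\h))$ above both $\lambda(\mathbb{S}_{uv}(\h)\setminus e)$ and $\lambda(\h-e)$, so both differences are nonnegative on the required range. The interlacing-type monotonicity you invoke comes from real-rootedness, which is unavailable for $k\ge 3$ since these polynomials have non-real zeros. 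Even for graphs, $\mu(\g-S,x)/\mu(\g,x)$ decreases rather than increases beyond $\lambda(\g)$.

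Second, your reduction does not stay inside the families being optimized over. Gathering several cycle-bearing blocks that hang off a tree onto one vertex is not a shift, because shifting $v$ to $u$ also drags along the tree edges at $v$. Your ``cycle-moving variant built from the same inequality'' is never specified. The paper needs a genuinely different tool here, Lemma~\ref{treejointgraphslemma}, proved via $k$-walk-trees (so that $\lambda$ is the spectral radius of a tree) and the Li--Shao--Qi Perron-vector edge-moving theorem.

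More seriously, for (c) with $k\ge 3$ shifting destroys linearity: the computation above turns a linear triangle into a non-linear $2$-cycle. So ``the same shifting scheme'' leaves $\mathfrak{LCa}_k(m,t)$ precisely when you need to bring triangles and pendant stars to a common corner. The paper avoids this by using only linearity-preserving moves. With them it shows that in a linear maximizer every cycle is a triangle whose non-corner vertices have degree one, and every pendant tree is a star centred at a cycle vertex. Hence every edge contains at least $k-2$ vertices of degree one, i.e.\ $\h=G^{(k)}$ for a graph cactus $G$. Then $\lambda(G^{(k)})=\lambda(G)^{2/k}$ (Lemma~\ref{largestmrootpowergraph}) reduces (c) to the case $k=2$. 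There the shift is the Kelmans transformation, under which triangles stay triangles.
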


%The following result is an immediate consequence of Theorem~\ref{MainresutCactus}.

%\begin{corollary}\label{unicyclicMaxM}
%Let $k\geq 3$ {\rm(}resp.~$k\geq 2${\rm)} be an integer.
%The $k$-graph $\h^{(k)}_{m,1}$
%{\rm(}resp. $\mathcal{L}^{(k)}_{m,1}${\rm)}
% is the unique $k$-graph
%whose largest matching root attains the maximum
%among all unicyclic
% {\rm(}resp. linear{\rm)}
%$k$-graphs with $m$ edges.
%\end{corollary}

To achieve Theorem~\ref{MainresutCactus}, we prove that the celebrated shifting operation of $k$-graphs, introduced by Erd\H{o}s, Ko and Rado~\cite{Erdos},
does not decrease the largest matching root.

%The shifting operation on $k$-graphs
\subsection{Main tool: the shifting operation}
The Kelmans transformation is an important tool in algebraic graph theory,
which was introduced by Kelmans~\cite{Kelmans} in 1981 to study the extremal problems related to the synthesis of reliable networks.
Denote by $E_\h(v)$ the set of all edges of the $k$-graph $\h$ containing $v$.
Given a graph $G$ with two vertices $u$ and $v$, the {\it Kelmans transformation} of $G$ from $v$ to $u$
is the operation that removes all edges between $v$ and $N_G(v)\setminus ( N_G(u) \cup \{u\})$ and adds all edges between $u$ and $N_G(v)\setminus ( N_G(u) \cup \{u\})$.
See Figure~\ref{KelmansTfig} for an example.

\begin{figure}[H]
\centering
\begin{tikzpicture}[scale=0.81]
		
%%%%%%%%%%%% xia san bian %%%%%%%%%%%%
\draw[thick] (4,0) ellipse[x radius=3 ,y radius=0.8];
		
\draw[thick] (3.3,0) ellipse[x radius=1.5 ,y radius=0.4];
		
\draw[thick] (4.7,0) ellipse[x radius=1.5 ,y radius=0.4];
		
%%%%%%%%%%%% He xin dian %%%%%%%%%%%%
		
\node at (2.8,2.2) {$u$};
		
\node at (5.2,2.2) {$v$};
		
\filldraw[fill=black,draw=black](2.2,0)circle[radius=0.05];
		
\filldraw[fill=black,draw=black](2.8,0)circle[radius=0.05];
		
\draw[ thick] (2.8, 2) -- (2.2, 0);
\draw[ thick] (2.8, 2) -- (2.8, 0);	
		
\filldraw[fill=black,draw=black](4.2,0)circle[radius=0.05];
		
\filldraw[fill=black,draw=black](3.8,0)circle[radius=0.05];
		
\draw[thick] (2.8, 2) -- (4.2, 0);
\draw[thick] (2.8, 2) -- (3.8, 0);
		
\draw[thick] (5.2, 2) -- (4.2, 0);
\draw[thick] (5.2, 2) -- (3.8, 0);

\draw[thick, draw=red] (5.2, 2) -- (5.1, 0);
\draw[thick, draw=red] (5.2, 2) -- (5.5, 0);
\draw[thick, draw=red] (5.2, 2) -- (5.9, 0);
		
\filldraw[fill=black,draw=black](5.1,0)circle[radius=0.05];
		
\filldraw[fill=black,draw=black](5.5,0)circle[radius=0.05];
		
\filldraw[fill=black,draw=black](5.9,0)circle[radius=0.05];

\filldraw[fill=black,draw=black](2.8,2)circle[radius=0.05];
\filldraw[fill=black,draw=black](5.2,2)circle[radius=0.05];
		
\draw[<->,thick , dotted] (2.8, 2) to (5.2, 2);
		
		%%%%%%%%%%%%%%%%%%%%%%%%%%%%%%%%%%%%%%%%%%%%%%%%%%%%%%%%%%%%%%%%%%%%%%%		%%%%%%%%%%%%%%%%%%%%%%%%%%%%%%%%%%%%%%%%%%%%%%%%%%%%%%%%%%%%%%%%%%%%%%%

\draw[thick] (12,0) ellipse[x radius=3 ,y radius=0.8];
		
\draw[thick] (11.3,0) ellipse[x radius=1.5 ,y radius=0.4];
		
\draw[thick] (12.7,0) ellipse[x radius=1.5 ,y radius=0.4];
		
%%%%%%%%%%%% He xin dian %%%%%%%%%%%%

\node at (10.8,2.2) {$u$};
		
\node at (13.2,2.2) {$v$};
		
\filldraw[fill=black,draw=black](10.2,0)circle[radius=0.05];
		
\filldraw[fill=black,draw=black](10.8,0)circle[radius=0.05];
		
\draw[thick] (10.8,2) -- (10.2, 0);
\draw[thick] (10.8,2) -- (10.8, 0);

\filldraw[fill=black,draw=black](12.2,0)circle[radius=0.05];
		
\filldraw[fill=black,draw=black](11.8,0)circle[radius=0.05];
	
\draw[thick] (10.8, 2) -- (12.2, 0);
\draw[thick] (10.8, 2) -- (11.8, 0);
		
\draw[thick] (13.2, 2) -- (12.2, 0);
\draw[thick] (13.2, 2) -- (11.8, 0);

\draw[thick, draw=blue] (10.8, 2) -- (13.1, 0);
\draw[thick, draw=blue] (10.8, 2) -- (13.5, 0);
\draw[thick, draw=blue] (10.8, 2) -- (13.9, 0);
		
\filldraw[fill=black,draw=black](13.1,0)circle[radius=0.05];
		
\filldraw[fill=black,draw=black](13.5,0)circle[radius=0.05];
		
\filldraw[fill=black,draw=black](13.9,0)circle[radius=0.05];

\filldraw[fill=black,draw=black](10.8,2)circle[radius=0.05];
\filldraw[fill=black,draw=black](13.2,2)circle[radius=0.05];
		
\draw[<->,thick , dotted] (10.8, 2) to (13.2, 2);
		
\draw[->,thick ] (7,1)to (9,1);
		
\node at (4, -1.3) {$G$};
		
\node at (12, -1.3) {$H$};
		
\end{tikzpicture}
\caption{An example for Kelmans transformation from $v$ to $u$ in $G$.}\label{KelmansTfig}
\end{figure}

In recent years, the Kelmans transformation has found several applications, such as in the theory of reliable networks~\cite{Brown,Satyanarayana},
spectral graph theory~\cite{CsikvariDM}, and
graph polynomial theory~\cite{Csikvari}.
Moreover, Csikv\'ari~\cite{Csikvari} proved that
the Kelmans transformation does not decrease the largest matching root, and Chen, Li, and Lian~\cite{Chen} subsequently determined the case of equality.
We summarize these two results as the following theorem and refer the reader to the original articles~\cite{Csikvari,Chen} for more details.

\begin{theorem}[\cite{Csikvari,Chen}]\label{KelmTransgraph}
Let $G$ be a connected graph with the largest matching root $\lambda(G)$.
If $\widehat{G}$ is the graph obtained from $G$ by a Kelmans transformation,
then $\lambda(G)\leq \lambda(\widehat{G})$,
with equality if and only if $G$ is isomorphic to $\widehat{G}$.
 \end{theorem}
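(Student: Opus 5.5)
We would prove Theorem~\ref{KelmTransgraph} by comparing the two matching polynomials directly. Fix the two vertices $u,v$ and set $H_0=G-\{u,v\}=\widehat G-\{u,v\}$. Write $C=N_G(u)\cap N_G(v)$, $A=N_G(v)\setminus(N_G(u)\cup\{u\})$ and $B=N_G(u)\setminus(N_G(v)\cup\{v\})$. Then in $G$ the vertex $u$ sees $B\cup C$ and $v$ sees $A\cup C$. In $\widehat G$ the vertex $u$ sees $A\cup B\cup C$ and $v$ sees only $C$. The edge $uv$, if present, is common to both graphs.

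\textbf{Step 1: a difference formula.} Classify matchings of $G$ and of $\widehat G$ by how they meet $\{u,v\}$.
\begin{itemize}
\item Matchings avoiding both $u$ and $v$, or containing $uv$, are the same in the two graphs.
\item Matchings covering exactly one of $u,v$ contribute $\sum_w\mu(H_0-w,x)$, with a common power of $x$ and sign. Here $w$ runs over the multiset $(B\cup C)\uplus(A\cup C)$ in $G$ and over $(A\cup B\cup C)\uplus C$ in $\widehat G$. These multisets coincide, so these terms agree.
\item Matchings covering both $u$ and $v$ by two different edges $ub',va'$ contribute $\mu(H_0-a'-b',x)$ with sign $(-1)^2=+1$. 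The admissible pairs are $b'\in B\cup C$, $a'\in A\cup C$ in $G$, and $b'\in A\cup B\cup C$, $a'\in C$ in $\widehat G$. After cancelling common pairs, exactly the pairs with $b\in B$, $a\in A$ survive, on the side of $G$.
\end{itemize}
Hence
\[
\mu(G,x)-\mu(\widehat G,x)=\sum_{a\in A,\;b\in B}\mu(H_0-a-b,x).
\]

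\textbf{Step 2: the inequality.} If $A=\emptyset$ or $B=\emptyset$, then $\widehat G\cong G$: either nothing moves, or the transformation just swaps the roles of $u$ and $v$. So assume both are nonempty.

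Each $H_0-a-b$ is a proper subgraph of $\widehat G$. By the classical monotonicity of the largest matching root under taking subgraphs (Godsil), $\lambda(H_0-a-b)\le\lambda(\widehat G)$. Since each $\mu(H_0-a-b,x)$ is monic, it is positive for $x>\lambda(\widehat G)$.

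Now $\widehat G$ is connected, because $G$ is and every moved neighbour is still attached via $u$, which remains joined to $v$ through a path. For a connected graph the inequality is strict for proper subgraphs (Heilmann--Lieb together with the path-tree argument). Hence each term is also positive at $x=\lambda(\widehat G)$.

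Therefore $\mu(G,x)>\mu(\widehat G,x)\ge 0$ for all $x\ge\lambda(\widehat G)$. So $\mu(G,x)$ has no root in $[\lambda(\widehat G),\infty)$, which gives $\lambda(G)<\lambda(\widehat G)$. This proves the inequality, with equality only when $A$ or $B$ is empty, that is, only when $\widehat G\cong G$.

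\textbf{Main obstacle.} The delicate point is the strictness in the equality case. We need $\lambda(H_0-a-b)<\lambda(\widehat G)$ strictly, which relies on connectivity of $\widehat G$ and on strict monotonicity of $\lambda$ under deleting vertices of a connected graph. We would establish this via Godsil's path-tree: the path tree of a subgraph is a proper subforest of the path tree of the connected graph, and Perron--Frobenius for the connected tree gives the strict inequality.
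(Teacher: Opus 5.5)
Your argument is correct except for one false claim, and your strictness step relies on it: $\widehat G$ need not be connected. If $uv\notin E(G)$ and $C=\emptyset$, then $v$ loses all of its neighbours. For instance, for the path $u\,b\,a\,v$ the transformation yields a triangle on $\{u,a,b\}$ together with the isolated vertex $v$.

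The inequality you need still holds. Every neighbour of $v$ in $G$ other than $u$ lies in $A\cup C\subseteq N_{\widehat G}(u)$, and no edge avoiding $v$ is touched. So any $G$-path through $v$ can be rerouted through $u$, and $\widehat G-v$ is always connected. Each $H_0-a-b$ is a proper subgraph of $\widehat G-v$, since it misses $u$. Hence $\lambda(H_0-a-b)<\lambda(\widehat G-v)\le\lambda(\widehat G)$. This is the same case split the paper makes in the proof of Lemma~\ref{Kelmanstprelma}: $\mathbb{S}_{uv}(\h)$ is either connected or consists of $v$ and one component. With this repair your proof is complete, including the equality case. The difference formula, the multiset cancellation, and the isomorphism when $A$ or $B$ is empty are all correct.

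Your route differs from the paper's. The paper does not prove Theorem~\ref{KelmTransgraph} itself; it recovers it as the case $k=2$ of Theorem~\ref{KelmTranshypergraph}, by induction on $|E(\h)|$. It expands $\mu(\h)$ and $\mu(\mathbb{S}_{uv}(\h))$ along a common edge $e\in E_\h(u)\setminus E_\h(v)$. It then applies the induction hypothesis to $\h\setminus e$ and the spanning-subgraph comparison to $\mathbb{S}_{uv}(\h)-e\subseteq\h-e$, and uses Lemma~\ref{Kelmanstprelma} to locate the relevant roots. Strictness needs an extra proper-subgraph hypothesis, which the remark after that theorem verifies for $k=2$.

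Your exact identity (Csikv\'ari's) avoids both the induction and the ordering $\preccurlyeq$, and gives the equality case at once. It is also sharper. Replace neighbours by the links $e\setminus\{u\}$ and $f\setminus\{v\}$, and let $A,B$ be the corresponding families of links. The same bookkeeping then holds verbatim for $k$-graphs, and only disjoint link pairs from $B\times A$ survive.

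This pinpoints what the inductive step needs. The identity $\mathbb{S}_{uv}(\h\setminus e)=\mathbb{S}_{uv}(\h)\setminus e$ used there holds only if $(e\setminus\{u\})\cup\{v\}\notin E(\h)$. For graphs this means $e=ub$ with $b\in B$, which is available whenever $G\not\simeq\widehat G$. The strictness hypothesis of Theorem~\ref{KelmTranshypergraph} omits this condition, and it is needed. Take the $3$-graph with edges $\{1,3,4\},\{2,3,4\},\{2,5,6\},\{1,5,7\}$, with $(u,v)=(1,2)$ and $e=\{1,3,4\}$. That hypothesis is met, yet $\h$ and $\mathbb{S}_{12}(\h)$ both have matching polynomial $x^7-4x^4+2x$.
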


Using Theorem~\ref{KelmTransgraph},
Chen, Li, and Lian~\cite{Chen} solved a conjecture on the maximum skew-spectral radius of odd-cycle graphs,
and Liu et al.~\cite{Liu} determined the unicyclic graphs with the four largest matching roots.

In 1961,
Erd\H{o}s, Ko and Rado~\cite{Erdos} invented the shifting operation to prove the celebrated Erd\H{o}s--Ko--Rado theorem.
The shifting operation has become a powerful
technique in extremal combinatorics, and we refer the reader to the survey~\cite{Frankl} by Frankl for more details and applications.

%In their celebrated article\footnote{They proved their result in 1938, but did not publish it until 1961.}\,\cite{Erdos},

\begin{definition}[\cite{Erdos}]
Let $\h$ be a $k$-graph with two vertices $u$ and $v$.
For an edge $e\in E(\h)$, its $(u,v)$-shift $\mathbb{S}_{uv}(e)$ is defined by
$$\mathbb{S}_{uv}(e)=\left\{
\begin{array}{ll}
(e\setminus\{v\})\cup\{u\}, & v\in e, u\notin e \text{ and } (e\setminus\{v\})\cup\{u\}\notin E(\h); \\
e, & \hbox{otherwise.}
\end{array}
\right.
$$
The $(u,v)$-shift $\mathbb{S}_{uv}(\h)$ of $\h$ is the $k$-graph
$$\big(V(\h),\left\{\mathbb{S}_{uv}(e):  e\in E(\h) \right\}\big).$$
\end{definition}

We note that for $k=2$, the shifting operation coincides with the Kelmans transformation.
The following result extends Theorem~\ref{KelmTransgraph} from graphs to $k$-graphs, which will be employed
to prove Theorem~\ref{MainresutCactus}.
For an edge  $e\in E(\h)$, we write $\h-e$ for the $k$-graph with $V(\h-e)=V(\h)\setminus e$ and $E(\h-e)=\{f\in E(\h):f \subseteq V(\h)\setminus e\}$.

\begin{theorem}\label{KelmTranshypergraph}
Let $\mathcal{H}$ be a connected $k$-graph with two vertices $u$ and $v$.
Then
$$%\begin{equation}\label{KelmTranshypergraphineq}
\lambda(\mathcal{H})\leq \lambda(\mathbb{S}_{uv}(\h)).
$$%\end{equation}
Moreover, if $\mathbb{S}_{uv}(\h)$ is not isomorphic to $\mathcal{H}$ and there exists an edge $e\in E_\h(u)\setminus E_\h(v)$ such that $\mathbb{S}_{uv}(\h)-e$ is a proper subgraph of $\h-e$, then
$\lambda(\mathcal{H})< \lambda(\mathbb{S}_{uv}(\h)).$
\end{theorem}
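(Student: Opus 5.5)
We would follow Csikv\'ari's approach for graphs and compare the matching polynomials through their vertex recursions. Write $\widehat{\h}=\mathbb{S}_{uv}(\h)$ and $\mu(\cdot)=\mu(\cdot,x)$. The basic recurrence for $k$-graphs is
$$\mu(\h)=x\,\mu(\h-w)-\sum_{e\in E_\h(w)}\mu(\h-e),$$
where $\h-w$ deletes the vertex $w$. It follows directly from the definition by splitting the matchings according to the edge covering $w$.

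\textbf{Step 1: split the edges.} Classify the edges of $\h$ into four types:
\begin{itemize}
\item[(i)] edges containing neither $u$ nor $v$;
\item[(ii)] edges containing both $u$ and $v$;
\item[(iii)] edges $f\cup\{u\}$ and $f\cup\{v\}$ that come in pairs, for the same $(k-1)$-set $f$;
\item[(iv)] unpaired edges containing $u$ only, or $v$ only.
\end{itemize}
The shift fixes types (i)--(iii) and moves the unpaired $v$-edges to $u$.

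\textbf{Step 2: expand and compare.} Expand $\mu(\h)$ and $\mu(\widehat{\h})$ at both $u$ and $v$. Matchings that avoid $u$ and $v$, or that use an edge of type (ii), contribute identically to both sides. Let $A$ be the set of unpaired $u$-edges and $B$ the set of unpaired $v$-edges, and let $\mathcal{G}$ be the common hypergraph obtained by deleting $u$, $v$ and the edges through them. The difference becomes a sum over pairs $(a,b)\in A\times B$ with $a\cap b\subseteq\{u,v\}$, together with mixed terms involving paired edges. After cancellation we expect
$$\mu(\h)-\mu(\widehat{\h})=\sum_{(e,f)}\mu(\mathcal{G}-e'-f'),$$
where the sum runs over disjoint pairs consisting of an unpaired $u$-edge and an unpaired $v$-edge, and $e',f'$ denote the $(k-1)$-sets remaining after removing $u$ and $v$. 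The reason is that in $\widehat{\h}$ both edges become attached to $u$, so they can no longer be used together. This generalizes Csikv\'ari's identity $\mu(G)-\mu(\widehat G)=\sum\mu(G-\dots)$, and the paired edges of type (iii) should cancel exactly. To verify the identity in general, we would show that it holds coefficientwise: count matchings in $\h$ and in $\widehat{\h}$ by their restriction to $\mathcal{G}$.

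\textbf{Step 3: conclude the inequality.} Each term on the right is the matching polynomial of a subgraph of $\widehat{\h}$, obtained by deleting vertices. By the hypergraph Heilmann--Lieb theorem of \cite{WanWF}, each such term has largest real root $\le\lambda(\widehat{\h})$ and is positive for $x>\lambda(\widehat{\h})$; for this we need interlacing for subgraphs, namely $\lambda(\h')\le\lambda(\h)$ when $\h'\subseteq\h$. Hence $\mu(\h)\ge\mu(\widehat{\h})>0$ for all $x>\lambda(\widehat{\h})$. Since $\lambda(\h)$ is the largest real root of $\mu(\h)$, this gives $\lambda(\h)\le\lambda(\widehat{\h})$.

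\textbf{Step 4: strictness.} At $x=\lambda(\widehat{\h})$ we need $\mu(\h)>0$, that is, some term on the right must be strictly positive there. Here we use the hypothesis: there is an edge $e$ through $u$ but not $v$ for which $\widehat{\h}-e$ is a proper subgraph of $\h-e$. We expand instead at $u$ via $e$ and reduce to strict monotonicity of $\lambda$ under proper subgraphs of connected pieces. This strict monotonicity is available from the simple-root and Perron-type results in \cite{WanWF}, or can be proved via the path-tree comparison. Combined with connectedness of $\h$, it should yield $\mu(\h,\lambda(\widehat{\h}))>0$.

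\textbf{Main obstacle.} The hard part is Step 2, getting the exact difference identity for $k\ge3$. Edges of types (ii) and (iii) interact in ways that have no analogue for $k=2$, and the mixed terms must cancel cleanly rather than leave signed remainders. If a clean identity fails, we would fall back on the weaker statement $\mu(\h)-\mu(\widehat{\h})=\sum\pm\mu(\cdot)$ and bound the negative terms using the ratio inequality $\mu(\h'-w)/\mu(\h')$ decreasing in $x$ beyond the largest root.
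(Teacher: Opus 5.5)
Your Steps 1--3 are correct and prove $\lambda(\mathcal{H})\le\lambda(\mathbb{S}_{uv}(\mathcal{H}))$ by a route different from the paper's. Step 4 cannot be completed, because the strict part of the statement is false as written.

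\textbf{Steps 1--3.} The identity you only ``expect'' in Step 2 does hold. Write $P_u,P_v$ for the paired edges and $B'$ for the shifted copies of $B$. Matchings avoiding $u,v$ or using an edge through both agree on the two sides. The one-edge terms also agree, since a shifted edge keeps its $(k-1)$-set outside $\{u,v\}$. Among two-edge terms, $\mathcal{H}$ has the extra classes $A\times B$ and $P_u\times B$, while $\widehat{\mathcal{H}}$ has the extra class $B'\times P_v$. The map $(S\cup\{u\},T\cup\{v\})\mapsto(T\cup\{u\},S\cup\{v\})$ matches $P_u\times B$ bijectively with $B'\times P_v$, so these cancel and
\[
\mu(\mathcal{H},x)-\mu(\widehat{\mathcal{H}},x)=\sum_{\substack{a\in A,\ b\in B\\ a\cap b=\emptyset}}\mu\bigl(\mathcal{H}-(a\cup b),x\bigr).
\]
Each $\mathcal{H}-(a\cup b)$ uses only edges untouched by the shift, so it is a subgraph of $\widehat{\mathcal{H}}$, and Step 3 goes through.

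\textbf{Comparison with the paper.} The paper inducts on $|E(\mathcal{H})|$. It deletes one edge $e\in E_{\mathcal{H}}(u)\setminus E_{\mathcal{H}}(v)$ and splits the difference into $\mu(\mathcal{H}\setminus e)-\mu(\mathbb{S}_{uv}(\mathcal{H})\setminus e)$ and $\mu(\mathbb{S}_{uv}(\mathcal{H})-e)-\mu(\mathcal{H}-e)$. It controls these by induction and by a spanning-subgraph comparison. Your identity avoids induction and gives exact information. For connected $\mathcal{H}$, the inequality is strict if some unpaired $u$-edge misses some unpaired $v$-edge, and otherwise $\mu(\mathcal{H},x)=\mu(\widehat{\mathcal{H}},x)$. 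This also explains the example in the paper's Remark (b).

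\textbf{The gap in Step 4.} Expanding at $u$ via $e$ cannot produce a strictly positive term in general. The hypothesis that $\widehat{\mathcal{H}}-e$ is a proper subgraph of $\mathcal{H}-e$ says exactly that some $b\in B$ misses $e$.
\begin{itemize}
\item If $e$ is unpaired, i.e.\ $(e\setminus\{u\})\cup\{v\}\notin E(\mathcal{H})$, then $(e,b)$ is a term of your sum. Here $\mathcal{H}-(e\cup b)$ is a proper subgraph of the component of $\widehat{\mathcal{H}}$ containing $u$, which is all of $\widehat{\mathcal{H}}$ except possibly an isolated $v$. So this term is positive at $x=\lambda(\widehat{\mathcal{H}})$, and strictness follows.
\item If $e$ is paired, nothing forces a disjoint pair in $A\times B$.
\end{itemize}
Concretely, take $k=3$, $E(\mathcal{H})=\{\{1,2,3\},\{1,2,4\},\{1,3,5\},\{2,6,7\},\{3,6,7\}\}$, $u=2$, $v=3$, and $e=\{2,6,7\}$. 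Then:
\begin{itemize}
\item The shift only replaces $\{1,3,5\}$ by $\{1,2,5\}$.
\item The degree sequences $(3,3,3,2,2,1,1)$ and $(4,3,2,2,2,1,1)$ differ, so the two hypergraphs are not isomorphic.
\item $\mathcal{H}-e$ contains $\{1,3,5\}$ while $\widehat{\mathcal{H}}-e$ has no edges, so the hypothesis holds.
\item Yet both hypergraphs have $\mu=x^7-5x^4+2x$. Here $A=\{\{1,2,4\}\}$ and $B=\{\{1,3,5\}\}$ meet at $1$.
\end{itemize}

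\textbf{Where the paper's proof breaks.} It fails at the same point. It asserts $\mathbb{S}_{uv}(\mathcal{H}\setminus e)=\mathbb{S}_{uv}(\mathcal{H})\setminus e$, which is false for paired $e$: deleting $e$ lets its partner $(e\setminus\{u\})\cup\{v\}$ shift onto $e$. In the example the two brackets are $-x$ and $+x$.

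\textbf{What is true.} The strict claim holds under the extra assumption that $e$ is unpaired, and your identity then finishes Step 4 at once. The paper's non-strict induction also needs $e$ chosen unpaired. Such a choice exists, since $A=\emptyset$ makes $u\leftrightarrow v$ an isomorphism $\widehat{\mathcal{H}}\simeq\mathcal{H}$.
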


\begin{remark}%\label{KelmTranshypergraphremark}
We make the following two remarks for Theorem~\ref{KelmTranshypergraph}.
\begin{enumerate}[label={\rm (\alph*)}]% (\roman*)[label=\arabic*)]
		
\item If $\h$ is not isomorphic to $\mathbb{S}_{uv}(\h)$, then there exist edges $e\in E_\h(u)\setminus E_\h(v)$ and $f\in E_\h(v)\setminus E_\h(u)$ satisfying $\mathbb{S}_{uv}(f)\notin E(\h)$.
    Using this fact, for $k=2$, one may check that $\mathbb{S}_{uv}(\h)-e$ is a proper subgraph of $\h-e$.
    By Theorem~\ref{KelmTranshypergraph}, we get
    $\lambda(\mathcal{H})< \lambda(\mathbb{S}_{uv}(\h)).$
    Hence, Theorem~\ref{KelmTranshypergraph} and  Theorem~\ref{KelmTransgraph} are identical when $k=2$.

\item Comparing  Theorem~\ref{KelmTranshypergraph} and  Theorem~\ref{KelmTransgraph}, for $k\geq 3$,
one may ask whether
$\lambda(\mathcal{H})=\lambda(\mathbb{S}_{uv}(\h))$
if and only if $\h$ is isomorphic to $\mathbb{S}_{uv}(\h)$.
%Unfortunately, t
The answer is `no' since we can construct infinitely many $k$-graphs $\h$ such that $\h$ is not isomorphic to $\mathbb{S}_{uv}(\h)$ but $\lambda(\mathcal{H})=\lambda(\mathbb{S}_{uv}(\h))$ as follows.
Let $\g$ be the $3$-graph with vertex set $\{1,2,3,4,5\}$ and edge set
\[
E(\g)=\{\{1,2,3\},\{1,2,4\},\{1,3,5\}\}.
\]
Then
\[
E(\mathbb{S}_{23}(\g))
=
\{\{1,2,3\},\{1,2,4\},\{1,2,5\}\}.
\]
Observe that $\g$ and $\mathbb{S}_{23}(\g)$ are not isomorphic, but they have the same matching polynomial
\[
\mu(\g,x)=\mu(\mathbb{S}_{23}(\g),x)=x^5-3x^2.
\]
So they have the same largest matching root.
Moreover, for $k\geq 3$, denote by $\g^{(k)}_m$ the $k$-graph obtained from $\g$ by adding $k-3$ new vertices to each edge of $\g$ and attaching $m$ pendant edges to vertex $1$.
One may find that $\g^{(k)}_m$ and $\mathbb{S}_{23}(\g^{(k)}_m)$ are not isomorphic, but they have the same matching polynomial.
%\begin{figure}[H]
%\centering
%\begin{tikzpicture}[scale=0.81, every %node/.style={font=\small}]

% 顶点坐标
%\coordinate (v1) at (0, 2);
%\coordinate (v2) at (-1.5, 0);
%\coordinate (v3) at (1.5, 0);
%\coordinate (v4) at (-0.8, -1.5);
%\coordinate (v5) at (0.8, -1.5);

% 超边 {1,2,3} ― 蓝色椭圆（放大）
%\draw[blue, thick] (0, 0.7) ellipse (2.1cm and 1.7cm);

% 超边 {1,2,4} ― 红色椭圆（放大，顺时针旋转15度）
%\draw[red, thick, rotate around={-15:(-0.5,0.2)}] (-0.5, 0.2) %ellipse (1.3cm and 2.3cm);

% 超边 {1,3,5} ― 绿色椭圆（放大，逆时针旋转15度）
%\draw[green!60!black, thick, rotate around={15:(0.5,0.2)}] %(0.5, 0.2) ellipse (1.3cm and 2.3cm);

% 顶点：黑色实心圆点 + 编号
%\foreach \i/\pos in {1/above, 2/left, 3/right, 4/below left, %5/below right} {
 %   \fill (v\i) circle (3pt);
 %   \node[\pos] at (v\i) {\i};
%}
%
%\end{tikzpicture}
%\caption{ $G$.}\label{KelmansTfig}
%\end{figure}

%\item If $\mathbb{S}_{uv}(\h)$ is not isomorphic to $\mathcal{H}$ and there exist edges $e\in E_\h(u)$ and $f\in E_\h(v)$ such that $e\cap f= \emptyset$ and $\mathbb{S}_{uv}(f)\notin E(\h)$, then $\mathbb{S}_{uv}(\h)-e$ is a proper subgraph of $\h-e$, and we get the following result by Theorem~\ref{KelmTranshypergraph}.
\end{enumerate}
\end{remark}

%Theorem~\ref{KelmTranshypergraph}
%By choice two special edge , we get the result

If $\mathbb{S}_{uv}(\h)$ is not isomorphic to $\mathcal{H}$ and there exist edges $e\in E_\h(u)$ and $f\in E_\h(v)$ such that $e\cap f= \emptyset$ and $\mathbb{S}_{uv}(f)\notin E_\h(u)$, then $\mathbb{S}_{uv}(\h)-e$ is a proper subgraph of $\h-e$.
Combining this fact and Theorem~\ref{KelmTranshypergraph},
we get the following result.
The advantage of this corollary is that it only
requires finding two special edges.

%The advantage of this corollary is that it
%we only need to find two special edges.
%other than check proper subgraph.

\begin{corollary}\label{KelmTransHGCor}
Let $\mathcal{H}$ be a connected $k$-graph with two vertices $u$ and $v$.
If $\mathbb{S}_{uv}(\h)$ is not isomorphic to $\mathcal{H}$
and there exist edges $e\in E_\h(u)$ and $f\in E_\h(v)$
%
%$e$ containing $u$ and $f$ containing $v$
%$e\in E_\h(u)\setminus E_\h(v)$ and $f\in E_\h(v)\setminus E_\h(u)$
such that $e\cap f= \emptyset$ and $\mathbb{S}_{uv}(f)\notin E_\h(u)$, then
$
\lambda(\mathcal{H})< \lambda(\mathbb{S}_{uv}(\h)).
$
\end{corollary}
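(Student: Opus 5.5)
The corollary will follow from the strict part of Theorem~\ref{KelmTranshypergraph}. It suffices to exhibit an edge in $E_\h(u)\setminus E_\h(v)$ for which $\mathbb{S}_{uv}(\h)-e$ is a proper subgraph of $\h-e$. The natural candidate is the given edge $e$. It lies in $E_\h(u)$, and since $e\cap f=\emptyset$ and $v\in f$, we have $v\notin e$; hence $e\in E_\h(u)\setminus E_\h(v)$.

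The first step is to check that $e$ itself survives the shift and that the vertex sets agree. Since $v\notin e$, the definition gives $\mathbb{S}_{uv}(e)=e$, so $e\in E(\mathbb{S}_{uv}(\h))$. Both $\h-e$ and $\mathbb{S}_{uv}(\h)-e$ therefore have vertex set $V(\h)\setminus e$, and $u$ is deleted from both, because $u\in e$.

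The second step is the inclusion $E(\mathbb{S}_{uv}(\h)-e)\subseteq E(\h-e)$. Take an edge $g'$ of $\mathbb{S}_{uv}(\h)$ with $g'\cap e=\emptyset$, and write $g'=\mathbb{S}_{uv}(g)$ for some $g\in E(\h)$. If $g'=g$, then $g'\in E(\h-e)$ directly. Otherwise $g'=(g\setminus\{v\})\cup\{u\}$, so $u\in g'$. This contradicts $g'\cap e=\emptyset$, because $u\in e$. Thus every edge of $\mathbb{S}_{uv}(\h)-e$ is an unshifted edge of $\h$ avoiding $e$.

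The third step is strictness, witnessed by $f$. Since $f\cap e=\emptyset$, we have $f\in E(\h-e)$. Suppose $f$ were an edge of $\mathbb{S}_{uv}(\h)-e$. By the second step it would then be the image of itself. I plan to show that the hypothesis $\mathbb{S}_{uv}(f)\notin E_\h(u)$ forces $f$ to be shifted, i.e.\ $\mathbb{S}_{uv}(f)\neq f$:
\begin{itemize}
\item $v\in f$ by assumption.
\item $u\notin f$, because $u\in e$ and $e\cap f=\emptyset$.
\item If $(f\setminus\{v\})\cup\{u\}$ were already an edge of $\h$, then $f$ would not move, so $\mathbb{S}_{uv}(f)=f$. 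Since $u\notin f$, we have $f\notin E_\h(u)$, so this case is not excluded by the hypothesis as literally written.
\end{itemize}
The hypothesis is intended to mean that the shifted set $(f\setminus\{v\})\cup\{u\}$ is not in $E_\h(u)$, equivalently not in $E(\h)$. I would make this reading explicit, as in the paragraph preceding the corollary, so that $f$ is genuinely shifted. Then $f\notin E(\mathbb{S}_{uv}(\h))$: it is not its own image, and no other edge $g\neq f$ maps to $f$, since any shifted image contains $u$ while $u\notin f$. Hence $\mathbb{S}_{uv}(\h)-e$ is a proper subgraph of $\h-e$.

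Finally, combining this with the assumption that $\mathbb{S}_{uv}(\h)$ is not isomorphic to $\h$, the strict case of Theorem~\ref{KelmTranshypergraph} gives $\lambda(\h)<\lambda(\mathbb{S}_{uv}(\h))$. The only delicate point is the third step, namely interpreting the hypothesis on $\mathbb{S}_{uv}(f)$ correctly. Everything else is a direct unwinding of the definition of the shift.
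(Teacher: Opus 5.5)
Your reduction is the same as the paper's argument. The paper also just observes that $\mathbb{S}_{uv}(\mathcal{H})-e$ is a proper subgraph of $\mathcal{H}-e$ and invokes the strict part of Theorem~\ref{KelmTranshypergraph}. Your three verification steps are correct. You are also right that the condition $\mathbb{S}_{uv}(f)\notin E_{\mathcal{H}}(u)$ is vacuous as written: since $u\notin f$, either $f$ stays and $\mathbb{S}_{uv}(f)=f\not\ni u$, or it moves to a set outside $E(\mathcal{H})$. So it must be read as $(f\setminus\{v\})\cup\{u\}\notin E(\mathcal{H})$.

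The gap is the final step. The strict part of Theorem~\ref{KelmTranshypergraph} is false, and so is the corollary, even under your reading. Take $k=3$ and $E(\mathcal{H})=\{\{1,3,4\},\{2,3,4\},\{2,5,6\},\{1,5,7\}\}$ on $\{1,\dots,7\}$, with $u=1$ and $v=2$.
The edge $\{2,3,4\}$ stays because $\{1,3,4\}\in E(\mathcal{H})$, while $\{2,5,6\}$ becomes $\{1,5,6\}$.
The $3$-graph $\mathcal{H}$ is connected, and $\mathbb{S}_{12}(\mathcal{H})\not\simeq\mathcal{H}$, since vertex $1$ gets degree $3$ whereas $\mathcal{H}$ has maximum degree $2$.
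The edges $e=\{1,3,4\}$ and $f=\{2,5,6\}$ satisfy every hypothesis, with $f$ genuinely moved.
As your steps predict, $\mathbb{S}_{12}(\mathcal{H})-e$ (no edges) is a proper subgraph of $\mathcal{H}-e$ (edge $\{2,5,6\}$).
But both $3$-graphs have four edges, exactly two disjoint pairs of edges, and no $3$-matching. Hence $\mu(\mathcal{H},x)=\mu(\mathbb{S}_{12}(\mathcal{H}),x)=x^7-4x^4+2x$, and $\lambda(\mathcal{H})=\lambda(\mathbb{S}_{12}(\mathcal{H}))=(2+\sqrt{2})^{1/3}$.

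The error is in the proof of Theorem~\ref{Kelmansttheorem11}, at the identity $\mathbb{S}_{uv}(\mathcal{H}\setminus e)=\mathbb{S}_{uv}(\mathcal{H})\setminus e$. This identity fails when $e^{\#}:=(e\setminus\{u\})\cup\{v\}\in E(\mathcal{H})$, because after deleting $e$ the edge $e^{\#}$ is shifted onto $e$. In the example $e^{\#}=\{2,3,4\}$, and $\mu(\mathcal{H}\setminus e,x)-\mu(\mathbb{S}_{12}(\mathcal{H})\setminus e,x)=-x$. So \eqref{detedheKteq22331} fails, and it exactly cancels the gain $\mu(\mathbb{S}_{12}(\mathcal{H})-e,x)-\mu(\mathcal{H}-e,x)=x$.

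To get a true statement, add the hypothesis $(e\setminus\{u\})\cup\{v\}\notin E(\mathcal{H})$. Then the identity holds and your argument goes through. The repaired argument also needs the non-strict inequality for $\mathcal{H}\setminus e$, and that inequality does survive. If every $e\in E_{\mathcal{H}}(u)\setminus E_{\mathcal{H}}(v)$ had $e^{\#}\in E(\mathcal{H})$, the transposition of $u$ and $v$ would map $\mathcal{H}$ onto $\mathbb{S}_{uv}(\mathcal{H})$. So whenever the shift is non-isomorphic, the induction can choose an $e$ with $e^{\#}\notin E(\mathcal{H})$. As stated, without this extra condition on $e$, the corollary cannot be proved.
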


%\subsection{The structure of the paper}

The rest of this paper is organized as follows.
In the next section, we give some basic properties on spectral radii and largest matching roots of
 $k$-graphs.
We prove Theorem~\ref{KelmTranshypergraph} in Section~\ref{sectionKT} and Theorem~\ref{MainresutCactus} in Section~\ref{sectioncatus}.
We conclude this paper in Section~\ref{Concluding}. % with some problems.

%\newpage

\section{Preliminaries}

In this section, we introduce some basic notation and lemmas on the spectral radii and largest matching roots of $k$-graphs.

\subsection{Basic notation}

Let $\h$ be a $k$-graph with a vertex $v$.
Denote by $E_\h(v)$ the set of all edges of $\h$ containing $v$.
The {\it degree} of $v$ is defined as $|E_\h(v)|$, denoted by $d_\h(v)$.
For a path or a cycle $\mathcal{X} = (v_0, e_1, v_1, \ldots, e_\ell, v_\ell)$, $\ell$ is called the {\it length} of $\mathcal{X}$, and we view $\mathcal{X}$ as a $k$-graph with vertex set $\cup_{i=1}^\ell e_i$ and edge set $\cup_{i=1}^\ell \{e_i\}$.
A $k$-graph $\h$ is  {\it connected} if  any pair of  vertices of $\h$ are connected by a path,
and is  a {\it $k$-uniform hypertree} (or simply {\it $k$-tree}) if $\h$ is both connected and acyclic.

Two $k$-graphs $\h_1$ and $\h_2$ are called {\it isomorphic}, denoted by $\h_1\simeq \h_2$,
if there exists a bijection $\theta:V(\h_1)\to  V(\h_2)$ such that $ \{v_1, \ldots, v_k\}\in E(\h_1)$ if and only if  $ \{\theta(v_1),  \ldots, \theta(v_k)\}\in E(\h_2)$.
We call such a bijection $\theta$ an {\it isomorphism} from $\h_1$ to $\h_2$.
The $k$-graph $\h_1$ is called a {\it subgraph} of $\h_2$
if $V(\h_1)\subseteq V(\h_2)$ and $E(\h_1)\subseteq E(\h_2)$.
If $\h_1$ is a subgraph of $\h_2$ and $\h_1\neq \h_2$,
then $\h_1$ is called a {\it proper subgraph} of $\h_2$.
If $\h_1$ is a subgraph of $\h_2$ and $V(\h_1)= V(\h_2)$,
then $\h_1$ is called a {\it spanning subgraph} of $\h_2$.
For an edge  $e\in E(\h)$, we write $\h\setminus e$ for the $k$-graph $(V(\h),E(\h)\setminus \{e\})$.
%We denote by $\h-e$ the $k$-graph obtained from $\h$ by deleting all vertices in $e$ and their incident edges, that is, $V(\h-e)=V(\h)\setminus e$ and $E(\h-e)=\{e\in E(\h):e \subseteq V(\h)\setminus e\}$.

\begin{lemma}[\cite{Suejc}]\label{basiclemma}
Let $\g$ and $\h$ be two $k$-graphs. Then the following statements hold.
\begin{enumerate}[label={\rm (\alph*)}]% (\roman*)[label=\arabic*)]
		
\item For the disjoint union $\g\oplus\h$ of $\g$ and $\h$,	
    $$\mu(\g\oplus \h, x)=\mu(\g, x)\mu(\h, x).$$
		
\item For every edge  $e\in E(\h)$,
	  $$\mu(\h, x)=\mu(\h\setminus e, x)-\mu(\h-e, x).$$
		
\end{enumerate}
\end{lemma}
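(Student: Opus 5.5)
The plan is to prove both parts of Lemma~\ref{basiclemma} directly from the definition~\eqref{definitionmp}, by a bijective count of $r$-matchings and then bookkeeping of signs and exponents. No earlier result is needed beyond the definition.

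For (a), I would write $V(\g\oplus\h)=V(\g)\sqcup V(\h)$ and $E(\g\oplus\h)=E(\g)\sqcup E(\h)$. A set of edges of $\g\oplus\h$ is a matching if and only if its parts in $E(\g)$ and in $E(\h)$ are matchings, since edges from different components never meet. Hence $r$-matchings of $\g\oplus\h$ are in bijection with pairs consisting of an $i$-matching of $\g$ and an $(r-i)$-matching of $\h$, $0\le i\le r$, and
$$p(\g\oplus\h,r)=\sum_{i=0}^{r}p(\g,i)\,p(\h,r-i).$$
Expanding the product $\mu(\g,x)\mu(\h,x)$, the term indexed by $(i,r-i)$ has sign $(-1)^i(-1)^{r-i}=(-1)^r$. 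Its exponent is $|V(\g)|-ki+|V(\h)|-k(r-i)=|V(\g\oplus\h)|-kr$. Collecting terms by $r$ gives exactly $\mu(\g\oplus\h,x)$.

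For (b), I would fix $e\in E(\h)$ and split the $r$-matchings of $\h$ according to whether they contain $e$. Those not containing $e$ are exactly the $r$-matchings of $\h\setminus e$, which has the same vertex set as $\h$. So they contribute $(-1)^r p(\h\setminus e,r)x^{|V(\h)|-kr}$, and summing over $r$ gives $\mu(\h\setminus e,x)$.

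An $r$-matching $M$ containing $e$ corresponds, via $M\mapsto M\setminus\{e\}$, bijectively to an $(r-1)$-matching of $\h-e$. Indeed, the other edges of $M$ avoid $e$, so they lie in $V(\h)\setminus e$; conversely, any matching of $\h-e$ together with $e$ is a matching of $\h$. Since $|V(\h-e)|=|V(\h)|-k$, the exponent satisfies
$$x^{|V(\h)|-kr}=x^{|V(\h-e)|-k(r-1)}.$$
The sign satisfies $(-1)^r=-(-1)^{r-1}$. Summing over $r\ge1$, these terms therefore contribute $-\mu(\h-e,x)$. This yields $\mu(\h,x)=\mu(\h\setminus e,x)-\mu(\h-e,x)$.

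There is no real obstacle. The only points needing care are that $\h\setminus e$ keeps all vertices while $\h-e$ removes the $k$ vertices of $e$, and that the convention $p(\cdot,0)=1$ makes the index shift in (b) consistent.
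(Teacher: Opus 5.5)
Your proof is correct. For (a) you split each matching of the disjoint union into its parts in the two components. For (b) you sort matchings by whether they contain $e$, and you handle the exponent shift $|V(\mathcal{H}-e)|=|V(\mathcal{H})|-k$ and the sign change $(-1)^r=-(-1)^{r-1}$ correctly.

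The paper does not prove this lemma itself; it quotes it from Su et al. Your direct counting argument from the definition of the matching polynomial is the standard proof of it.
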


\subsection{Spectral radius of $k$-graphs}

A real {\it tensor} % $ (also called \emph{hypermatrix})
$\A=(a_{i_{1}  \ldots i_{k}})$ of order $k$ and dimension $n$ is a multi-dimensional array with entries $a_{i_{1}\cdots i_{k}}\in \mathbb{R}$
for $i_{j}\in [n]:=\{1,\ldots,n\}$ and $j\in [k]$.
Let $\A=(a_{i_{1} \cdots i_{k}})$ be a tensor of order $k$ and dimension $n$.
For a vector $\x=(\x_1,\ldots,\x_n)^\top\in \mathbb{C}^{n}$, we write $\x^{[k]}=(\x_1^{k},\ldots,\x_n^{k})^\top$ and denote by $\A \x^{k-1}$ the vector in $\mathbb{C}^n$ whose $i$-th entry is defined as
\begin{align*}
(\A \x^{k-1})_i & =\sum_{i_{2},\ldots,i_{k}\in [n]}a_{ii_{2}\cdots i_{k}}\x_{i_{2}}\cdots \x_{i_k}.
\end{align*}
In 2005, Lim \cite{Lim} and Qi \cite{Qi} independently defined  the eigenvalues of tensors as follows.
For a complex number $\lambda$, if the system
$$\A \x^{k-1}=\lambda \x^{[k-1]},$$
has a solution $\x \in \mathbb{C}^{n}\setminus \{0\}$,
then $\lambda $ is said to be an \emph{eigenvalue} of $\A$ and $\x$ is called an \emph{eigenvector} of $\A$ associated with $\lambda$.

Let $\h$ be a $k$-graph with $V(\h)=\{v_1,\ldots,v_n\}$.
The {\it adjacency tensor} \cite{CD} of $\h$ is defined as $\mathcal{A}(\h)=(a_{i_{1}\cdots i_{k}})$,
a tensor of order $k$ and dimension $n$,
where
\[a_{i_{1}i_{2}\cdots i_{k}}=\left\{
 \begin{array}{ll}
\frac{1}{(k-1)!}, &  \mbox{if~} \{v_{i_{1}},\ldots,v_{i_{k}}\} \in E(\h);\\
  0, & \mbox{otherwise}.
  \end{array}\right.
\]
In this paper, the eigenvalues of a $k$-graph $\h$ always refer to those of its adjacency tensor.
The {\it spectral radius of $\h$} is defined as
\[
\rho(\h)=\max\{|\lambda|: \lambda \mbox{ is an eigenvalue of } \A(\h) \}.
\]
According to the Perron--Frobenius theorem of weakly irreducible
nonnegative tensors~\cite{Friedland,YY,YY3},
if $\h$ is connected, then there exists a unique positive eigenvector up to scaling corresponding to $\rho(\h)$, called the {\it Perron vector} of $\h$.

Li, Shao and Qi \cite{LiShaoQi} introduced the following edge-moving operation on $k$-graphs.
Let $\h$ be a $k$-graph with $u \in V(\h)$ and $e_1, \ldots, e_r \in E(\h)$ such that $u \notin e_i$ for all $i\in [r]$.
Suppose that $v_i \in e_i$
%($v_1, \ldots, v_r$ are not necessarily distinct)
and write $e'_i = (e_i \setminus \{v_i\}) \cup \{u\}$ for all $i\in [r]$.
Let $\h'$ be the $k$-graph with $V(\h')=V(\h)$ and $E(\h') = (E \setminus \{e_1, \ldots, e_r\}) \cup \{e'_1, \ldots, e'_r\}$.
We say that the $k$-graph $\h'$ is obtained from $\h$ by moving edges $e_1, \ldots, e_r$ from $v_1, \ldots, v_r$ to $u$.
Clearly, the shifting operation is a special case of the
edge-moving operation.

\begin{theorem}[\cite{LiShaoQi}] \label{moveedges}
Let $\h$ be a connected $k$-graph, and let $\h'$ be the  $k$-graph obtained from $\h$ by moving edges $e_1, \ldots, e_r$ from $v_1, \ldots, v_r$ to $u$.
Assume that $\h'$ contains no multiple edges.
If $\x$ is a Perron vector of $\h$ and $\x_u \geq \max_{1\le i \le r}\x_{v_i}$, then $\rho(\h') > \rho(\h)$.
\end{theorem}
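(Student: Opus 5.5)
The plan is the standard Rayleigh-quotient argument for nonnegative symmetric tensors. It proves $\rho(\h')\ge\rho(\h)$ by a direct comparison of the multilinear forms, and then gets strict inequality by reading the eigen-equation at the vertex $u$.

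\emph{Step 1 (variational form).} For a $k$-graph $\mathcal{G}$ and a nonnegative vector $\x$, expand the form:
\[
\x^\top\A(\mathcal{G})\x^{k-1}=\sum_{i}\x_i(\A(\mathcal{G})\x^{k-1})_i=k\sum_{e\in E(\mathcal{G})}\prod_{w\in e}\x_w .
\]
By the Perron--Frobenius theory for nonnegative symmetric tensors,
\[
\rho(\mathcal{G})=\max\{\x^\top\A(\mathcal{G})\x^{k-1}:\x\ge 0,\ \|\x\|_k=1\}.
\]
This requires no connectivity of $\mathcal{G}$. The hypothesis that $\h'$ has no multiple edges is used here: it makes $\h'$ an honest $k$-graph, so that this formula applies to $\h'$ with its edge set counted without repetition.

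\emph{Step 2 (weak inequality).} Let $\x$ be the Perron vector of $\h$, normalized so that $\|\x\|_k=1$. Since $\h$ is connected, $\x>0$. The edge sets of $\h$ and $\h'$ differ only by replacing each $e_i$ with $e_i'$, so
\[
\x^\top\A(\h')\x^{k-1}-\x^\top\A(\h)\x^{k-1}=k\sum_{i=1}^r\Big(\prod_{w\in e_i\setminus\{v_i\}}\x_w\Big)(\x_u-\x_{v_i})\ \ge 0 .
\]
Each term is nonnegative because $\x_u\ge\x_{v_i}$. Hence $\rho(\h')\ge \x^\top\A(\h')\x^{k-1}\ge\rho(\h)$.

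\emph{Step 3 (strictness, the only delicate point).} Suppose $\rho(\h')=\rho(\h)=:\rho$. Then all the inequalities in Step 2 are equalities, so $\x$ attains the maximum of the form of $\h'$ on the set $\{\x\ge 0,\ \|\x\|_k=1\}$. Because $\x$ is strictly positive, it is an interior point of the nonnegative orthant. The Lagrange multiplier condition therefore holds with no boundary terms, giving
\[
\A(\h')\x^{k-1}=\rho\,\x^{[k-1]}.
\]
Now compare the $u$-th coordinates. The vertex $u$ lies in no $e_i$ but in every $e_i'$, and all other edges incident to $u$ are common to $\h$ and $\h'$. Hence
\[
(\A(\h')\x^{k-1})_u=(\A(\h)\x^{k-1})_u+\sum_{i=1}^r\prod_{w\in e_i\setminus\{v_i\}}\x_w>(\A(\h)\x^{k-1})_u=\rho\,\x_u^{k-1}.
\]
The inequality is strict because $r\ge1$ and $\x>0$. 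This contradicts the eigen-equation for $\h'$ at $u$, so $\rho(\h')>\rho(\h)$.

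The main care needed is in Step 3: justifying that a positive maximizer of the constrained form is an eigenvector. The interior-point Lagrange argument handles this without invoking weak irreducibility of $\h'$, which may fail if $\h'$ is disconnected.
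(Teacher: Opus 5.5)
The paper does not prove this statement; it quotes it from \cite{LiShaoQi}. Your argument is correct and is essentially the standard proof of that result: you compare $\x^\top\A(\h')\x^{k-1}$ with $\x^\top\A(\h)\x^{k-1}=\rho(\h)$ using the variational characterization of $\rho$ for symmetric nonnegative tensors, and in the equality case you read the eigen-equation of $\h'$ at $u$. Your one variation is to show that a positive maximizer of the form is an eigenvector for $\rho(\h')$ by an interior Lagrange-multiplier argument (pairing with $\x$ shows the multiplier equals $\rho$), rather than citing Qi's characterization of maximizers; as you note, this needs no connectivity of $\h'$.
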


\subsection{The $k$-walk-tree}

To refute a conjecture by Kahn and Kim~\cite{KahnKim} regarding the random matchings of $k$-graphs,
Lee~\cite{Lee} introduced the concept of $k$-walk-tree which is a hypergraph analog of the path tree \cite{Godsil2}.

\begin{definition}\label{defwalktree}
Let $\h$ be a $k$-graph with a vertex $u\in V(\h)$, and let $\prec$ be a linear ordering on $V(\h).$
\begin{itemize}
\item Let $\mathcal{P}=(v_0,e_1,v_1, \ldots, e_\ell, v_\ell)$ be a path  in $\h$, where \(\ell\geq 1\).
     For each $i\in[\ell]$, let \(e_{i}=\{v_{i-1},u_{(i,1)},\ldots,u_{(i,k-2)},v_{i}\}\) and \(C_{i}=\{u_{(i,j)}:u_{(i,j)}\prec v_{i},j\in[k-2]\}\cup\{v_{i-1}\}\).
    The path $\mathcal{P}$ is called a {\em conflict-free walk} that starts from \(v_{0}\) and ends at \(v_{\ell}\) if for each \(2\leq i\leq\ell,\) the edge \(e_{i}\) is disjoint with the set \(\bigcup_{j=1}^{i-1}C_{j}\).
    In particular, a one-vertex path $(u)$ is considered as a conflict-free walk that starts from \(u\) and ends at \(u\).

\item The {\it $k$-walk-tree} $\T(\h,\prec,u)$ of $\h$ rooted at $u\in V(\h)$ with respect to $\prec$ is the $k$-graph whose vertex set is the set of conflict-free walks that start from \(u\).
    We join \(k\) conflict-free walks \(\mathcal{P}_{0},\mathcal{P}_{1},\ldots,\mathcal{P}_{k-1}\) as an edge of $\T(\h,\prec,u)$ if there is an edge \(e=\{u_{0},\ldots,u_{k-1}\}\in E(\h)\) such that \(\mathcal{P}_{0}\) is a conflict-free walk that starts from \(u\) and ends at \(u_{0}\), and for every \(i\in[k-1]\) the conflict-free walk \(\mathcal{P}_{i}\) ends at \(u_{i}\) and is obtained from \(\mathcal{P}_{0}\) by adding the edge \(e.\)
    In particular, we simply use $u$ to denote the vertex $(u)$ in $\T(\h,\prec,u)$.

\end{itemize}
\end{definition}

As observed in~\cite{Lee}, by definition, the {\it $k$-walk-tree} $\T(\h,\prec,u)$ is a $k$-tree, and it is isomorphic to $\h$ if $\h$ is a $k$-tree.

The following result implies that the largest matching root of a $k$-graph is equal to the spectral radius of its $k$-walk-tree.

\begin{theorem}[\cite{WanWF}]\label{largestrootdeflemma}
Let $\h$ be a connected $k$-graph with a linear ordering $\prec$ on $V(\h).$
Then $\lambda(\h)$ is a simple root of $\mu(\h,x)$ and equals $\rho(\T(\h,\prec,u))$.
\end{theorem}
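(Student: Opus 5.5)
The plan is to reduce everything to the case of $k$-trees, where the matching polynomial is tied to the adjacency tensor, and then transfer the information back to $\h$ through Lee's $k$-walk-tree. Write $\T=\T(\h,\prec,u)$ and $n=|V(\h)|$.

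\textbf{Step 1 (Godsil-type identity).} I would first establish the hypergraph analogue of Godsil's path-tree identity:
\[
\frac{\mu(\h,x)}{\mu(\h-u,x)}=\frac{\mu(\T,x)}{\mu(\T-u,x)},
\]
where $\h-u$ deletes the vertex $u$ together with its incident edges.
\begin{itemize}
\item \emph{Recursion.} Expanding $\mu(\h,x)$ along the edges through $u$ with Lemma~\ref{basiclemma}(b) gives $\mu(\h,x)=x\,\mu(\h-u,x)-\sum_{e\ni u}\mu(\h-e,x)$.
\item \emph{Induction.} Divide by $\mu(\h-u,x)$ and induct on $|V(\h)|$. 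The ordering $\prec$ is designed so that the conflict-free walks continuing from a vertex of $e$ reproduce exactly the walk-trees of the smaller graphs obtained by deleting the vertices of $C_i$. This lets the ratios telescope in the same way as in the graph case.
\item \emph{Divisibility.} The same induction should also give that $\mu(\h,x)$ divides $\mu(\T,x)$. The quotient is a product of matching polynomials of walk-trees of proper subgraphs (subforests of $\T$).
\end{itemize}
I expect this bookkeeping with the conflict sets $C_i$ to be the main obstacle. I would first try to follow Lee's proof of the walk-tree identity closely.

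\textbf{Step 2 (the tree case).} For a $k$-tree $\T$, I would use the result of Zhang et al.~\cite{Zhang}, together with the fact that the characteristic polynomial of a hypertree factors into powers of matching polynomials of subtrees. From these, every zero of $\mu(\T,x)$, and of $\mu(\mathcal{F},x)$ for any subforest $\mathcal{F}$, is an eigenvalue of $\A(\T)$ or of $\A(\mathcal{F})$. Hence all such zeros have modulus at most $\rho(\T)$, and $\rho(\T)$ is itself a root of $\mu(\T,x)$.

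I would then show that every proper subforest $\mathcal{F}$ satisfies $\rho(\mathcal{F})<\rho(\T)$, using Perron--Frobenius strict monotonicity for weakly irreducible tensors. Consequently, $\mu(\mathcal{F},x)>0$ for all real $x\ge\rho(\T)$: the polynomial is monic and has no real zero in that range.

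\textbf{Step 3 (simplicity via the derivative).} Counting pairs (vertex, $r$-matching avoiding that vertex) gives $\sum_v p(\h-v,r)=(n-kr)\,p(\h,r)$. Therefore
\[
\frac{d}{dx}\mu(\h,x)=\sum_{v}\mu(\h-v,x).
\]
Applying this to $\T$ at $x=\rho(\T)$, Step 2 makes every summand positive, so $\rho(\T)$ is a simple root of $\mu(\T,x)$.

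\textbf{Step 4 (transfer to $\h$).}
\begin{itemize}
\item \emph{Bound.} Since $\mu(\h)$ divides $\mu(\T)$, every zero of $\mu(\h,x)$ has modulus at most $\rho(\T)$.
\item \emph{Attained.} In the identity of Step 1 evaluated at $x=\rho(\T)$, the right-hand side vanishes: the numerator is $0$ and $\mu(\T-u,\rho(\T))>0$ by Step 2. Since $\mu(\h-u,x)$ is a polynomial and thus finite at $\rho(\T)$, we get $\mu(\h,\rho(\T))=0$.
\item \emph{Conclusion.} Hence $\lambda(\h)=\rho(\T)$. Simplicity in $\mu(\T)$, combined with the divisibility $\mu(\h)\mid\mu(\T)$, forces $\rho(\T)$ to be a simple root of $\mu(\h,x)$.
\end{itemize}
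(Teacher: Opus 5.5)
Your proposal is correct. The paper does not prove this statement; it quotes it from \cite{WanWF}, so there is no in-paper argument to compare against. What you give is a self-contained Godsil-style proof built from the tools the paper places around the statement: Lee's $k$-walk-tree, the hypertree spectral results of \cite{Zhang}, and strict Perron--Frobenius monotonicity for connected $k$-graphs. Each step checks out:
\begin{itemize}
\item The vertex recursion $\mu(\h,x)=x\mu(\h-u,x)-\sum_{e\ni u}\mu(\h-e,x)$ telescopes through the branches $\T(\h-\{u,w_1,\dots,w_{j-1}\},\prec,w_j)$ hanging below each edge $e\ni u$.
\item The double count $\sum_v p(\h-v,r)=(n-kr)p(\h,r)$ gives $\frac{d}{dx}\mu(\h,x)=\sum_v\mu(\h-v,x)$.
\item Cross-multiplying the ratio identity gives $\mu(\h,x)\mu(\T-u,x)=\mu(\T,x)\mu(\h-u,x)$. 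Together with $\mu(\T-u,\rho(\T))>0$, this forces $\mu(\h,\rho(\T))=0$.
\item The divisibility $\mu(\h,x)\mid\mu(\T,x)$ then gives both the modulus bound and the simplicity.
\end{itemize}

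\textbf{Choice of branch in the divisibility induction.} You wave this step through with ``the same induction should also give'', and in the hypergraph setting the branch must be chosen with care. For each component $K$ of $\h-u$, pick an edge $e\ni u$ meeting $K$. Let $w$ be the $\prec$-least vertex of $e\setminus\{u\}$ lying in $K$. Then the vertices of $e$ preceding $w$ lie outside $K$, so $K$ is still a component after deleting them. Hence the branch of $\T$ at $w$ is exactly $\T(K,\prec,w)$, and the induction hypothesis $\mu(K,x)\mid\mu(\T(K,\prec,w),x)$ applies. A non-minimal choice would instead give the walk-tree of $K$ with some vertices removed, whose matching polynomial need not be divisible by $\mu(K,x)$. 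Connectivity of $\h$ is what guarantees that every component of $\h-u$ meets some edge through $u$.

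\textbf{Direction of the hypertree result used in Step 2.} You need the direction of the hypertree results saying that every zero of $\mu(\mathcal{S},x)$, for a subtree $\mathcal{S}$, is an eigenvalue of $\A(\mathcal{S})$. The fact that the largest real root equals $\rho(\mathcal{S})$ is not enough, because for $k\ge 3$ there are non-real zeros whose moduli must also be controlled. You do state this direction; just make sure the citation you rely on actually contains it.
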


Let $\h,\h_1,\ldots,\h_s$ be pairwise vertex-disjoint $k$-graphs with $u_i\in V(\h)$ and $v_i\in V(\h_i)$ for all $i=1,\ldots,s$.
We denote by
$\h(u_1, \ldots,u_s)\ast(\h_1(v_1), \ldots,\h_s(v_s))$
the $k$-graph obtained from $\h$ by identifying $u_i$ with $v_i$ for each $i=1,\ldots,s$.
We briefly write $\h(u_1)\ast\h_1(v_1)$ when $s=1$.
Combining Theorems \ref{moveedges} and \ref{largestrootdeflemma}, we obtain the following result,
which suggests that
the largest matching root of a $k$-graph increases under a special edge-moving operation.

\begin{lemma}\label{treejointgraphslemma}
Let $\T,\h_1,\ldots,\h_s$ be pairwise vertex-disjoint connected $k$-graphs, where $s\geq 2$.
Let $u_1, \ldots,u_s$ be distinct vertices of $\T$ and $v_i\in V(\h_i)$ for all $i\in[s]$.
If $\T$ is a $k$-tree and $|E(\h_i)|\geq 1$ for all $i\in[s]$, then there exists $u\in\{u_1, \ldots, u_s\}$ such that
$$\lambda(\T(u_1, \ldots,u_s)\ast(\h_1(v_1), \ldots,\h_s(v_s)))
<
\lambda(\T(u, \ldots,u)\ast(\h_1(v_1), \ldots,\h_s(v_s))).$$
\end{lemma}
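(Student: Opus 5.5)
The plan is to reduce the statement to the tree case through the $k$-walk-tree, and then apply the edge-moving result, Theorem~\ref{moveedges}. Write $\g=\T(u_1,\ldots,u_s)\ast(\h_1(v_1),\ldots,\h_s(v_s))$. Fix a linear ordering $\prec$ on $V(\g)$, and for each $i$ let $\prec_i$ be its restriction to $V(\h_i)$. Set $\mathcal{W}_i=\T(\h_i,\prec_i,v_i)$, the $k$-walk-tree of $\h_i$ rooted at $v_i$. Each $\mathcal{W}_i$ is a $k$-tree with at least one edge, because $|E(\h_i)|\geq 1$ and $\h_i$ is connected, so $v_i$ lies in some edge. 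Define the $k$-trees
$$\mathcal{F}=\T(u_1,\ldots,u_s)\ast(\mathcal{W}_1(v_1),\ldots,\mathcal{W}_s(v_s))\quad\text{and}\quad \mathcal{F}_u=\T(u,\ldots,u)\ast(\mathcal{W}_1(v_1),\ldots,\mathcal{W}_s(v_s)).$$

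The first and main step is to show that $\lambda(\g)=\rho(\mathcal{F})$, and similarly that $\lambda(\g_u)=\rho(\mathcal{F}_u)$, where $\g_u=\T(u,\ldots,u)\ast(\h_1(v_1),\ldots,\h_s(v_s))$. By Theorem~\ref{largestrootdeflemma}, for any root $r\in V(\T)$ we have $\lambda(\g)=\rho(\T(\g,\prec,r))$. So it suffices to identify the walk-tree $\T(\g,\prec,r)$ with $\mathcal{F}$. Every cycle of $\g$ lies inside a single $\h_i$, since $\T$ is a tree and the $\h_i$ meet the rest of $\g$ only at $u_i=v_i$. Consequently a conflict-free walk from $r$ consists of a path in $\T$, possibly followed by a walk that enters some $\h_i$ at $v_i$ and never leaves it, because the only exit is through the already used vertex $v_i$.

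The edges of $\T$ and of $\h_i$ share only the vertex $v_i$. Hence the conflict-free condition for the portion of the walk inside $\h_i$ only involves the sets $C_j$ coming from edges of $\h_i$, together with $v_i$. Those conditions are exactly the conditions defining $\T(\h_i,\prec_i,v_i)$, so the walks continuing into $\h_i$ from $v_i$ form a copy of $\mathcal{W}_i$ glued at $u_i$. The tree part is reproduced isomorphically, because walk-trees of trees are isomorphic to the trees themselves. This gives $\T(\g,\prec,r)\simeq\mathcal{F}$. The same argument applied to $\g_u$ gives $\T(\g_u,\prec,r)\simeq\mathcal{F}_u$. I expect this bookkeeping with the ordering and the sets $C_j$ to be the only delicate point: one must check that the conflict sets of the $\T$-part never meet edges of $\h_i$ other than at $v_i$.

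The second step works entirely with trees. Let $\x$ be the Perron vector of the connected $k$-tree $\mathcal{F}$, and choose $u\in\{u_1,\ldots,u_s\}$ with $\x_u=\max_i \x_{u_i}$. For every $j$ with $u_j\neq u$, move all edges of $\mathcal{W}_j$ containing $u_j$ from $u_j$ to $u$. Since $s\geq 2$ and the $u_i$ are distinct, at least one such $j$ exists, and each $\mathcal{W}_j$ has an edge at $u_j$, so at least one edge is moved. The resulting $k$-graph is exactly $\mathcal{F}_u$.

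It contains no multiple edges: each moved edge consists of $u$ together with $k-1$ vertices of $\mathcal{W}_j$ other than $v_j$. These vertex sets are pairwise distinct within $\mathcal{W}_j$ and disjoint from every other piece of the graph. The hypothesis $\x_u\geq \x_{u_j}$ holds by the choice of $u$, so Theorem~\ref{moveedges} gives $\rho(\mathcal{F}_u)>\rho(\mathcal{F})$. Combining this with the first step yields $\lambda(\g)=\rho(\mathcal{F})<\rho(\mathcal{F}_u)=\lambda(\g_u)$, which is the claimed inequality.
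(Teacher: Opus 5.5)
Your proposal is correct and follows the paper's proof: identify the walk-tree of the glued $k$-graph with $\T$ carrying copies of the walk-trees $\T(\h_i,\prec,v_i)$ at the $u_i$, then apply Theorem~\ref{moveedges} using the Perron vector of that $k$-tree. In the first step, the fact to verify is that $v_i$ lies in no conflict set of the $\T$-part, which follows from acyclicity of $\T$. The only real difference is that you treat all $s$ attachments in one step, choosing $u$ to maximize $\x_{u_i}$ over all $i$, whereas the paper reduces to $s=2$ ``by recursiveness''; your version is slightly cleaner because it avoids having to justify that reduction.
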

\begin{proof}
By recursiveness, it suffices to prove the case of $s=2$.
Write $\h:=\T(u_1, u_2)\ast(\h_1(v_1), \h_2(v_2))$, and let $v\in V(\T)$.
We fix a linear order $\prec$ on $\h$.
As $\T$ is a $k$-tree, one may check that
$$\T(\h,\prec,v)\simeq \T(u_1, u_2)
\ast
(\T(\h_1,\prec,v_1)(v_1),\T(\h_2,\prec,v_2)(v_2))
=:\T_1,$$
Given a Perron vector $\x$ of $\T_1$, without loss of generality,
we may assume that $\x_{u_1}\geq \x_{u_2}$.
We choose $u=u_1$ which is the required vertex.
Write $\widetilde{\h}:=\T(u_1,u_1)\ast(\h_1(v_1),\h_2(v_2))$.
Then one may check that
$$\T(\widetilde{\h},\prec,v) \simeq \T(u_1, u_1)\ast (\T(\h_1,\prec,v_1)(v_1),\T(\h_2,\prec,v_2)(v_2))
=:\T_2.$$
Moreover, one may obtain $\T_2$ from $\T_1$ by moving every edge of $\T(\h_2,\prec,v_2)$ adjacent to $v_2$ from $v_2$ to $v_1$ in $\T_1$.
By Theorems~\ref{moveedges} and \ref{largestrootdeflemma}, we obtain
$$\lambda(\h)=\rho(\T_1)
< \rho(\T_2)=\lambda(\widetilde{\h}),$$
which completes the proof.
\end{proof}

\subsection{$k$-cacti}

Recall the definition of the $k$-cactus in Subsection~\ref{Mainsubsection}.
To facilitate understanding, we give the following observation.
\begin{observation}\label{observationcactus}%[\cite{Suejc}]\label{basiclemma}
Let $k\geq 3$ and let $\h$ be a $k$-cactus.
Then the following configurations are forbidden in $\h$.
\begin{enumerate} [label={\rm (\alph*)}]% (\roman*)[label=\arabic*)]
		
\item Two edges share at least three common vertices.
		
\item Three edges share at least two common vertices.

\item Two cycles share at least one common edge.

\end{enumerate}
\end{observation}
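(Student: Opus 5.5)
Each of the three configurations will be ruled out by exhibiting two distinct cycles that share at least two vertices, which contradicts the definition of a $k$-cactus. We take $k\ge 3$ throughout, so that every edge has at least three vertices.

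\emph{(a)} Suppose two distinct edges $e,f$ share three vertices $a,b,c$. Then $(a,e,b,f,a)$, $(b,e,c,f,b)$ and $(a,e,c,f,a)$ are cycles of length $2$, since $e\neq f$ and the listed intermediate vertices are distinct. As sequences, $(a,e,b,f,a)$ and $(b,e,c,f,b)$ are distinct cycles. Both have vertex set $e\cup f$, and $|e\cup f|\ge k\ge 3\ge 2$. Hence two distinct cycles meet in at least two vertices, a contradiction.

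\emph{(b)} Suppose three distinct edges $e,f,g$ all contain two vertices $a,b$. Then $\mathcal C_1=(a,e,b,f,a)$ and $\mathcal C_2=(a,e,b,g,a)$ are distinct cycles. Their vertex sets are $e\cup f$ and $e\cup g$, and both contain $e$. Since $|e|=k\ge 3$, they share at least three vertices, a contradiction.

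\emph{(c)} Suppose two distinct cycles share an edge $e$. By the definition, $V(\mathcal C_1)\cap V(\mathcal C_2)\supseteq e$. Since $|e|=k\ge 2$, this intersection has at least two vertices, again a contradiction.

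The only point needing care is the convention that a cycle is an alternating sequence, so cycles that differ as sequences count as distinct. In (a) the two cycles use the same edges, so we rely on this convention to know they are distinct. If one insists that distinct cycles have distinct edge sets, then (a) follows from (c) instead: any single $2$-cycle on $e,f$ is already forbidden when it shares an edge with another cycle. Under either reading we can also argue directly: the $2$-cycles $(a,e,b,f,a)$ and $(a,e,c,f,a)$ use different vertex pairs, so the three vertices $a,b,c$ give at least two distinct cycles.

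In every case (c) is immediate, (b) reduces to $|e|\ge 3$, and (a) reduces to counting cycles. We expect no real obstacle beyond fixing the notion of distinctness for cycles.
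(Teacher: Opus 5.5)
The paper states this as an observation without proof, so the only question is whether your argument is sound. Parts (b) and (c) are fine; (b) is in fact an instance of (c), since your two $2$-cycles share the edge $e$. The gap is in (a), exactly at the point you flagged, and your resolution is wrong under both conventions you name.

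\emph{The sequence convention.} It cannot be the intended one. Under it, a cycle and its reversal, or any rotation of it, are distinct cycles with the same vertex set of size at least $k$. Then no $k$-graph containing a cycle would be a $k$-cactus, and $\mathfrak{Ca}_k(m,t)$ would be empty for $t\ge 1$.

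\emph{The edge-set convention.} This reading is suggested by the paper viewing a cycle as the $k$-graph with edge set $\{e_1,\dots,e_\ell\}$. Under it, statement (a) is actually false. For $k\ge 4$, take the $k$-graph consisting only of two edges $e,f$ with $|e\cap f|\ge 3$. Every cycle in it has edge set $\{e,f\}$, so it has a single cycle and is a $k$-cactus. Hence your claim that (a) ``follows from (c)'' is a non sequitur, because there is no second cycle sharing an edge with it. Your ``direct'' argument also fails here, since $(a,e,b,f,a)$ and $(a,e,c,f,a)$ have the same edge set. So it does not work ``under either reading''.

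\emph{The correct convention.} What makes (a) true, while keeping the cactus notion non-degenerate, is to identify cycles up to cyclic rotation and reversal of the sequence. The set $\{v_1,\dots,v_\ell\}$ of turning vertices of a cycle is invariant under these operations. It is $\{a,b\}$ for $(a,e,b,f,a)$ and $\{a,c\}$ for $(a,e,c,f,a)$. So these are two distinct cycles whose common vertex set $e\cup f$ has at least $k\ge 3$ elements, which proves (a).

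You should state this convention explicitly and drop the other two readings, rather than claiming the argument is independent of the convention.
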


The following lemma gives more structural information for $k$-cacti.

\begin{lemma}\label{lemmacactus}%[\cite{Suejc}]\label{basiclemma}
Let $\h$ be a $k$-cactus with a cycle $\mathcal{C}$.
Then the following assertions hold.
\begin{enumerate} [label={\rm (\alph*)}]%

\item If the length of $\mathcal{C}$ is at least three, then it is linear.

 \item We can write  $\h$ as
\begin{equation}\label{decompcat}
\h=\mathcal{C}(v_1, \ldots,v_s)\ast(\h_1(v_1), \ldots,\h_s(v_s)),
\end{equation}
 where $v_1, \ldots,v_s\in V(\mathcal{C})$ and $\h_1,\ldots,\h_s$ are pairwise vertex-disjoint connected $k$-graphs.

 \end{enumerate}
\end{lemma}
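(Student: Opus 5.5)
For part (a), I will argue by contradiction. Suppose $\mathcal{C}=(v_0,e_1,v_1,\ldots,e_\ell,v_\ell)$ with $\ell\ge 3$ is not linear. Then two distinct edges $e_i,e_j$ of $\mathcal{C}$ share two vertices $a\neq b$. The idea is to build a second cycle from $e_i$ and $e_j$ alone. The sequence $(a,e_i,b,e_j,a)$ is a cycle of length $2$; call it $\mathcal{C}'$. Since $\mathcal{C}'\neq\mathcal{C}$ (the lengths differ), the cactus condition gives $|V(\mathcal{C})\cap V(\mathcal{C}')|\le 1$. But $V(\mathcal{C}')=e_i\cup e_j\subseteq V(\mathcal{C})$, so the intersection has at least $k\ge 2$ vertices, which is a contradiction. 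The only point to check is that this length-$2$ sequence really meets the definition of a cycle: it needs $\ell\ge2$, distinct edges, and distinct interior vertices $a,b$, and all three hold.

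For part (b), I will delete the edges of $\mathcal{C}$ and look at the components. Let $\h'=(V(\h),E(\h)\setminus E(\mathcal{C}))$. Each component of $\h'$ meets $V(\mathcal{C})$, because $\h$ is connected. The key claim is that each component $\mathcal{K}$ meets $V(\mathcal{C})$ in exactly one vertex.

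Suppose instead that $\mathcal{K}$ contains $x\ne y$ in $V(\mathcal{C})$. Choose a shortest path $\mathcal{P}$ in $\mathcal{K}$ between two distinct vertices of $V(\mathcal{C})$; by minimality, its internal vertices avoid $V(\mathcal{C})$. Then $x$ and $y$ lie on edges $e_i,e_j$ of $\mathcal{C}$, possibly with $i=j$. There are two cases.
\begin{itemize}
\item If $i=j$, then $\mathcal{P}$ together with $e_i$ forms a cycle.
\item If $i\neq j$, then $\mathcal{P}$ together with one arc of $\mathcal{C}$ from $e_i$ to $e_j$ forms a cycle.
\end{itemize}
In either case the new cycle $\mathcal{C}''$ shares an edge with $\mathcal{C}$, namely $e_i$, and $\mathcal{C}''\neq\mathcal{C}$ because it uses edges outside $E(\mathcal{C})$. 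So $|V(\mathcal{C}'')\cap V(\mathcal{C})|\ge k\ge2$, contradicting the cactus condition.

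This cycle construction is the main obstacle. The vertex sequence must have distinct interior vertices. To arrange this, I will take $\mathcal{P}$ with length at least $1$ and pick the connecting arc of $\mathcal{C}$ carefully, so that the shortcut vertices $x,y$ serve as the junction vertices. The degenerate subcase is when $\mathcal{P}$ is a single edge $f$ containing both $x$ and $y$. Then $(x,f,y,e_i,x)$ is already a $2$-cycle, and it intersects $\mathcal{C}$ in at least two vertices.

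Once the claim holds, label the distinct attachment vertices $v_1,\ldots,v_s$. Let $\h_i$ be the union of the components attached at $v_i$, which is connected through $v_i$; if a vertex has no attachment, take $\h_i$ to be the trivial graph $\{v_i\}$ or omit it. The $\h_i$ are pairwise vertex-disjoint, since each meets $V(\mathcal{C})$ only in its own $v_i$. This yields the decomposition \eqref{decompcat}.
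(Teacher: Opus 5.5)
Your proof is correct and takes essentially the paper's route. Part (a) is the same length-$2$ cycle argument. In part (b), the paper also obtains the contradiction by closing an extra connection between two vertices of $\mathcal{C}$ into a cycle that shares an edge with $\mathcal{C}$; its cases (1) and (2) correspond to your shortest path having length $1$ or longer, though it works vertex by vertex rather than through the components of $\h$ with $E(\mathcal{C})$ removed.

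One small imprecision: when $x$ and $y$ lie on different edges of $\mathcal{C}$, your $2$-cycle $(x,f,y,e_i,x)$ does not exist, and the shared edge need not be $e_i$. This does not matter, since concatenating $\mathcal{P}$ with any path inside $\mathcal{C}$ from $y$ to $x$ still yields a cycle distinct from $\mathcal{C}$ that shares at least one edge with it.
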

\begin{proof}
If $\mathcal{C}$ is not linear, then it contains two edges $e_1$ and $e_2$ such that $|e_1\cap e_2|\geq 2$.
Let $\{u,v\}\subseteq e_1\cap e_2$.
Since  the length of $\mathcal{C}$ is at least three,
$\mathcal{C}$ and the cycle $(u,e_1,v,e_2,u)$ are distinct and they share two common edges $e_1$ and $e_2$.
This contradicts Observation~\ref{observationcactus}(c), and (a) follows.

We next prove (b).
If $\mathcal{X} = (v_0, e_1, v_1, \ldots, e_\ell, v_\ell)$ is a path, then we call it a $v_0v_\ell$-path.
Choose an arbitrary vertex $v\in V(\mathcal{C})$
and assume that $\h$ cannot be written as $\h_1(v)\ast\h_2(v)$, where $\h_1$ contains $\mathcal{C}$ and $E_{\h_1}(v)=E_{\mathcal{C}}(v)$.
By the arbitrariness of $v$ and Observation~\ref{observationcactus}(c), to get a contradiction, it suffices to show that there exist an edge $e\notin E(\mathcal{C})$ and a cycle $\widehat{\mathcal{C}}$ with $e\in \widehat{\mathcal{C}}$
such that $\widehat{\mathcal{C}}$ and $\mathcal{C}$ share at least one edge.
We show this by considering the following two cases.
(1) If there exist an edge $e\in E_\h(v)\setminus E(\mathcal{C})$ and a vertex $u\in V(\mathcal{C})\setminus \{v\}$ such that $u\in e$,
then $e$ and a $uv$-path $\mathcal{P}$ with $E(\mathcal{P})\subseteq E(\mathcal{C})$ form a required cycle $\widehat{\mathcal{C}}$.
(2) If for every edge $f\in E_\h(v)\setminus E(\mathcal{C})$, we have $u\notin V(\mathcal{C})$ for each $u\in f\setminus \{v\}$,
then one may check that there exist an edge $e\in E_\h(v)\setminus E(\mathcal{C})$,
a vertex $w\in e\setminus \{v\}$, a vertex $z\in V(\mathcal{C})\setminus \{v\}$,
and a $wz$-path $\mathcal{P}_{wz}$ such that $v\notin V(\mathcal{P}_{wz})$ and $e\notin E(\mathcal{P}_{wz})$.
In this case, the edge $e$, the path $\mathcal{P}_{wz}$, and a $zv$-path $\mathcal{P}_{zv}$ with $E(\mathcal{P}_{zv})\subseteq E(\mathcal{C})$ form a required cycle $\widehat{\mathcal{C}}$.
\end{proof}

%\newpage

\section{Proof of Theorem~\ref{KelmTranshypergraph}}
\label{sectionKT}

In~\cite{LovaszPe}, Lov\'asz and  Pelik\'an introduced the ordering on forests based on the matching polynomial.
In recent years, the ordering was generalized to graphs by Csikv\'ari~\cite{Csikvari} and Chen, Li, and Lian~\cite{Chen}, and to $k$-forests by Su, Kang, Li, and Shan~\cite{Suejc}.
Inspired by their works,
we apply the same principle to define an ordering for $k$-graphs via the matching polynomials, which will be used to prove Theorem~\ref{KelmTranshypergraph}.

\begin{definition}
Let $\g$ and $\h$ be two $k$-graphs.
We write $\g\prec\h$ if $\mu(\g, x)> \mu(\h, x)$ for every $x\geq \lambda(\h)$ and write $\g\preccurlyeq \h$ if $\mu(\g, x)\geq \mu(\h, x)$ for every $x\geq \lambda(\h)$.
\end{definition}

Note that the matching polynomial is monic,
we can easily verify the following lemma.

\begin{lemma}\label{succrelation}
Let $\g$ and $\h$ be two $k$-graphs.
If $\g\preccurlyeq\h$, then
$\lambda(\g)\leq\lambda(\h)$.
If $\g\prec\h$, then
$\lambda(\g)<\lambda(\h)$.

\end{lemma}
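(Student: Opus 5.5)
The plan is to argue directly from the definition of $\preccurlyeq$ and the fact, recalled from \cite{WanWF}, that $\lambda(\cdot)$ is the largest real root of the matching polynomial. Since $\mu(\g,x)$ and $\mu(\h,x)$ are both monic, each is positive for all real $x$ larger than its largest real root. We will use this sign information together with the inequality hypothesis on $[\lambda(\h),\infty)$.

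For the first assertion, suppose $\g\preccurlyeq\h$, so that $\mu(\g,x)\ge\mu(\h,x)$ for every $x\ge\lambda(\h)$. Because $\lambda(\h)$ is the largest real root of the monic polynomial $\mu(\h,x)$, we have $\mu(\h,x)>0$ for all $x>\lambda(\h)$. Hence $\mu(\g,x)\ge \mu(\h,x)>0$ for all $x>\lambda(\h)$. Thus $\mu(\g,x)$ has no real root in $(\lambda(\h),\infty)$. Since $\lambda(\g)$ is a real root of $\mu(\g,x)$ by \cite[Theorem 1.2]{WanWF} (see also Theorem~\ref{largestrootdeflemma}), it follows that $\lambda(\g)\le\lambda(\h)$.

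For the strict assertion, suppose $\g\prec\h$, so that $\mu(\g,x)>\mu(\h,x)$ for every $x\ge\lambda(\h)$. The previous paragraph already shows that $\mu(\g,x)>0$ for $x>\lambda(\h)$. At the endpoint, $\mu(\g,\lambda(\h))>\mu(\h,\lambda(\h))=0$. Therefore $\mu(\g,x)$ has no real root in $[\lambda(\h),\infty)$, which gives $\lambda(\g)<\lambda(\h)$.

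The only point requiring care is that $\lambda(\cdot)$ is defined as the maximum modulus of the zeros rather than as the largest real root. The argument above controls only real roots, so it must be combined with the identification of $\lambda$ as a real root of $\mu$, namely the largest real root, supplied by \cite{WanWF}. A degenerate case, in which some $\mu$ has no real root, cannot occur, since $\lambda(\h)$ is itself a root. Beyond this identification, the proof is a routine sign argument.
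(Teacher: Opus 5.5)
Your argument is correct and is exactly the verification the paper leaves to the reader; the paper only remarks that the lemma follows easily because matching polynomials are monic. You supply the details: $\mu(\h,x)>0$ on $(\lambda(\h),\infty)$, this positivity transfers to $\mu(\g,x)$, and in the case $\g\prec\h$ you get strictness at the endpoint $x=\lambda(\h)$. You are also right that the step needs the fact from \cite{WanWF} that $\lambda(\g)$ is itself a real root of $\mu(\g,x)$ (for disconnected $\g$ this follows from the connected case via Lemma~\ref{basiclemma}(a)), since the sign argument alone only controls real roots.
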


To obtain more properties of the ordering, we need the following lemma.

\begin{lemma}[Lemma 4.5~\cite{WanWF}]\label{subgraphlagroot}
%Let $\h$ be a connected $k$-graph.
If $\G$ is a subgraph of a connected $k$-graph $\h$, then
$\lambda(\G)\leq \lambda(\h)$,
with equality if and only if $\G=\h$.
\end{lemma}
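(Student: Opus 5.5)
By Theorem~\ref{largestrootdeflemma}, $\lambda(\h)$ equals the spectral radius of a $k$-walk-tree. I would therefore reduce the statement to monotonicity of the tensor spectral radius under taking subgraphs of $k$-trees. First, by Lemma~\ref{basiclemma}(a), $\mu(\G,x)$ is the product of the matching polynomials of the connected components of $\G$. Hence $\lambda(\G)=\max_{\G'}\lambda(\G')$, where $\G'$ runs over the components of $\G$. It therefore suffices to show $\lambda(\G')<\lambda(\h)$ for every component $\G'$ of a proper subgraph $\G$. For $\G=\h$ there is nothing to prove.

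Fix a linear ordering $\prec$ on $V(\h)$ and use its restriction to $V(\G')$. The key observation is the following. Every conflict-free walk in $\G'$ starting at $u\in V(\G')$ is also a conflict-free walk in $\h$ starting at $u$. This holds because the definition only involves the edges of the walk and the ordering $\prec$. Likewise, every edge of $\T(\G',\prec,u)$ is an edge of $\T(\h,\prec,u)$. So $\T(\G',\prec,u)$ is a subgraph of $\T(\h,\prec,u)$. The latter is a connected $k$-tree, since $\h$ is connected.

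To obtain strictness, I would choose the root $u$ cleverly. If $\h$ has no edges, then $\h$ is a single vertex and $\G=\h$. If $\G'$ is a single isolated vertex, then $\lambda(\G')=0<\lambda(\h)$ as soon as $\h$ has an edge. Otherwise, $\G'\neq\h$ and $\h$ is connected, so some edge $f\in E(\h)\setminus E(\G')$ meets $V(\G')$. This edge either lies inside $V(\G')$ but was deleted, or leaves $V(\G')$. Take $u\in f\cap V(\G')$. For any $w\in f\setminus\{u\}$, the one-edge walk $(u,f,w)$ is conflict-free in $\h$, because the condition is vacuous for $\ell=1$. But this walk is not a walk in $\G'$. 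Thus $\T(\G',\prec,u)$ is a proper subgraph of the connected $k$-tree $\T(\h,\prec,u)$.

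Finally, I would invoke strict monotonicity of the spectral radius. If $\mathcal{F}$ is a proper subgraph of a connected $k$-graph $\mathcal{K}$, then $\rho(\mathcal{F})<\rho(\mathcal{K})$. This follows from the Perron--Frobenius theorem for weakly irreducible nonnegative tensors, since $\A(\mathcal{F})\le\A(\mathcal{K})$ entrywise with strict inequality somewhere, and $\A(\mathcal{K})$ is weakly irreducible. Extra isolated vertices only add zero rows, so they do not affect $\rho$. The main point needing care is this strict monotonicity. I would prove it by the Collatz--Wielandt type argument: take a nonnegative eigenvector of $\A(\mathcal{F})$ (its Perron vector on the relevant component) and compare the Rayleigh-type quotient $\x^\top\A(\mathcal{K})\x^{k-1}$ against the positive Perron vector of $\mathcal{K}$. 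Combining these steps gives $\lambda(\G')=\rho(\T(\G',\prec,u))<\rho(\T(\h,\prec,u))=\lambda(\h)$, as required.
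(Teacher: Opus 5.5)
The paper gives no proof of this lemma; it is quoted from \cite{WanWF}, so there is no in-paper argument to compare yours with. Judged on its own, your proof is correct. It follows the same template the paper uses for Lemma~\ref{treejointgraphslemma}: pass to $k$-walk-trees via Theorem~\ref{largestrootdeflemma}, then compare tensor spectral radii by Perron--Frobenius. The individual steps are all sound:
\begin{itemize}
\item the reduction to components via Lemma~\ref{basiclemma}(a);
\item the observation that conflict-freeness depends only on the edges of the walk and the relative order of their vertices, so the restricted ordering makes $\T(\G',\prec,u)$ a subgraph of $\T(\h,\prec,u)$;
\item the existence of an edge $f\in E(\h)\setminus E(\G')$ meeting $V(\G')$, which follows from connectivity of $\h$ and $\G'\neq\h$;
\item the one-edge walk $(u,f,w)$ as a witness of properness.
\end{itemize}

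The one step you leave loose is strict monotonicity of $\rho$, where your sketch mixes a Rayleigh-quotient argument with a Collatz--Wielandt one. The Rayleigh-quotient route would also need that every nonnegative eigenvector of a connected $k$-graph for its spectral radius is positive. That is not part of the Perron--Frobenius statement quoted in the paper, which only asserts a unique positive eigenvector.

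A direct comparison avoids this. Let $\x>0$ be the Perron vector of $\mathcal{K}$, let $\mathbf{y}\geq 0$ be the Perron vector of the connected $k$-tree $\mathcal{F}$ padded by zeros, and put $s=\max_i y_i/x_i$. At a coordinate $i$ attaining $s$ we have $y_i>0$ and
\[
\rho(\mathcal{F})y_i^{k-1}=(\A(\mathcal{F})\mathbf{y}^{k-1})_i\le(\A(\mathcal{K})\mathbf{y}^{k-1})_i\le(\A(\mathcal{K})(s\x)^{k-1})_i=\rho(\mathcal{K})y_i^{k-1},
\]
so $\rho(\mathcal{F})\le\rho(\mathcal{K})$. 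Suppose equality held. Then the second inequality, together with $y_j\le sx_j$, forces $y_j=sx_j$ for every $j$ sharing an edge of $\mathcal{K}$ with $i$. The same chain then applies at each such $j$, so by connectivity $\mathbf{y}=s\x>0$. But then $(\A(\mathcal{K})-\A(\mathcal{F}))\mathbf{y}^{k-1}=0$, which is impossible because $\mathbf{y}>0$ and $\mathcal{K}$ has an edge not in $\mathcal{F}$.

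Finally, your argument derives the lemma from Theorem~\ref{largestrootdeflemma}. That is legitimate here, since the paper imports both results from \cite{WanWF}. It would only be circular if that theorem were itself proved using this lemma.
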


\begin{lemma}\label{spanningsucclemma}
If $\g$ is a proper spanning subgraph of a connected $k$-graph $\h$, then $\g\prec\h$.
\end{lemma}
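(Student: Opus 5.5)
We prove the lemma by induction on the number of edges in $E(\h)\setminus E(\g)$, using the deletion recurrence of Lemma~\ref{basiclemma}(b). Because $\g$ is a proper spanning subgraph of $\h$, we can pick a chain of spanning subgraphs
\[
\g=\h_0\subsetneq \h_1\subsetneq\cdots\subsetneq \h_r=\h ,
\]
where $r\geq 1$ and each $\h_{i}$ is obtained from $\h_{i+1}$ by deleting a single edge $e_{i+1}$.

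The core step is the case of one deleted edge. Here we show that $\mu(\h\setminus e,x)>\mu(\h,x)$ for every $x\geq\lambda(\h)$, where $e\in E(\h)$. Lemma~\ref{basiclemma}(b) gives
\[
\mu(\h\setminus e,x)-\mu(\h,x)=\mu(\h-e,x).
\]
So it suffices to show $\mu(\h-e,x)>0$ for all $x\geq\lambda(\h)$.

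\begin{itemize}
\item \emph{Empty case.} If $V(\h-e)=\emptyset$, then $\mu(\h-e,x)=1>0$. If $\h-e$ has vertices but no edges, then $\mu(\h-e,x)=x^{|V(\h-e)|}$, which is positive for $x\geq\lambda(\h)>0$. Note that $\lambda(\h)>0$ because $\h$ has an edge and is connected.
\item \emph{General case.} Write $\h-e$ as the disjoint union of its components $\mathcal{K}_1,\dots,\mathcal{K}_p$. By Lemma~\ref{basiclemma}(a), $\mu(\h-e,x)=\prod_j\mu(\mathcal{K}_j,x)$.
\item \emph{Each factor is positive.} Each $\mathcal{K}_j$ is a subgraph of the connected $k$-graph $\h$ and misses the edge $e$, so it is a proper subgraph. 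Lemma~\ref{subgraphlagroot} then gives $\lambda(\mathcal{K}_j)<\lambda(\h)$; an isolated vertex has $\lambda=0$, which is also fine. Since $\mu(\mathcal{K}_j,x)$ is monic and real, and $\lambda(\mathcal{K}_j)$ is the maximum modulus of its zeros, it has no real root above $\lambda(\mathcal{K}_j)$. Hence it is positive on $(\lambda(\mathcal{K}_j),\infty)$, and in particular on $[\lambda(\h),\infty)$.
\end{itemize}

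For the chain, we need the inequality $\mu(\h_{i},x)>\mu(\h_{i+1},x)$ on $[\lambda(\h),\infty)$ for every $i$, not just on $[\lambda(\h_{i+1}),\infty)$. The argument above needs only that $\h_{i+1}-e_{i+1}$ has all components with largest matching root below $\lambda(\h)$. Each component of $\h_{i+1}-e_{i+1}$ is a subgraph of $\h$ not containing $e_{i+1}\in E(\h)$, hence a proper subgraph of $\h$. Lemma~\ref{subgraphlagroot}, applied inside the connected graph $\h$, then gives strict inequality. The graphs $\h_i$ themselves need not be connected, so this is exactly the point to handle carefully: apply Lemma~\ref{subgraphlagroot} only to subgraphs of the connected $\h$, never to $\h_{i+1}$.

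Chaining the strict inequalities for $x\geq\lambda(\h)$ gives $\mu(\g,x)>\mu(\h,x)$, that is, $\g\prec\h$.
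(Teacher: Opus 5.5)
Your proof is correct and is essentially the paper's argument: chaining the single-edge identities $\mu(\h_i,x)-\mu(\h_{i+1},x)=\mu(\h_{i+1}-e_{i+1},x)$ is exactly the telescoping sum \eqref{properconnectedeq}, and each term is positive on $[\lambda(\h),\infty)$ because it is monic and is the matching polynomial of a proper subgraph of the connected $\h$. Splitting $\h_{i+1}-e_{i+1}$ into components is harmless but not needed, since Lemma~\ref{subgraphlagroot} already applies to arbitrary, possibly disconnected, subgraphs of $\h$.
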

\begin{proof}
By the hypothesis, there exist distinct edges $\{e_1,\dots,e_t\}$ such that $\g=\h\setminus \{e_1,\dots,e_t\}$,
where $t\geq 1$ and $\h\setminus \{e_1,\dots,e_t\}$ denotes the $k$-graph $(V(\h),E(\h)\setminus \{e_1,\dots,e_t\})$.
Write $e_0:= \emptyset$.
Repeatedly applying Lemma~\ref{basiclemma}(b), % $t$ times,
we obtain
\begin{align*}
\mu(\h, x)
&=\mu(\h\setminus e_1, x)-\mu(\h-e_1, x)\\
&=\mu(\h\setminus \{e_1,e_2\}, x)-\mu(\h\setminus e_1-e_2, x)    -\mu(\h- e_1, x)\\
&=\cdots\\
&=\mu(\h\setminus \{e_1,\dots,e_t\}, x)-\sum_{i=1}^t\mu(\h \setminus\{e_0,e_1,\dots,e_{i-1}\}-e_i, x),
\end{align*}
which implies that
\begin{equation}\label{properconnectedeq}
\mu(\g,x)-\mu(\h,x)
= \sum_{i=1}^t\mu(\h \setminus\{e_0,e_1,\dots,e_{i-1}\}-e_i,x).
\end{equation}
For every $i\in[t]$, since $\mu(\h \setminus\{e_0,e_1,\dots,e_{i-1}\}-e_i, x)$ is monic,
we get $\mu( \h \setminus\{e_0,e_1,\dots,e_{i-1}\}-e_i , x)>0$ whenever $x>\lambda(\h\setminus\{e_0,e_1,\dots,e_{i-1}\}-e_i )$.
Moreover, $\h \setminus\{e_0,e_1,\dots,e_{i-1}\}-e_i$ is a proper subgraph of $\h$, so $\lambda(\h)>\lambda(\h \setminus\{e_0,e_1,\dots,e_{i-1}\}-e_i)$ by Lemma~\ref{subgraphlagroot}.
Combining this and \eqref{properconnectedeq}, we conclude that $\mu(\g, x)-\mu(\h, x)>0$
for every $x\geq \lambda(\h)$, as desired.
\end{proof}

\begin{corollary}\label{spanningsucclemma22}
If $\g$ is a proper spanning subgraph of a $k$-graph $\h$,
then  $\mu(\g, x)> \mu(\h, x)$ for every $x> \lambda(\h)$.
%and hence that $\g\preccurlyeq \h$.
Moreover, if $\widehat{\g}$ is a spanning subgraph of $\h$, then $\widehat{\g}\preccurlyeq \h$.
\end{corollary}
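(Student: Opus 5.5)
The corollary drops the connectedness assumption of Lemma~\ref{spanningsucclemma}, so the plan is to reduce to connected components using Lemma~\ref{basiclemma}(a). Write $\h=\h^{(1)}\oplus\cdots\oplus\h^{(q)}$ with connected components $\h^{(j)}$. Then
\[
\mu(\h,x)=\prod_{j=1}^q\mu(\h^{(j)},x), \qquad \lambda(\h)=\max_j \lambda(\h^{(j)}),
\]
where the second identity holds because the root set of $\mu(\h,x)$ is the union of the root sets of its factors. Since each $\mu(\h^{(j)},x)$ is monic and its real roots are bounded by $\lambda(\h^{(j)})$, every factor is positive for $x>\lambda(\h)$; an isolated vertex gives the factor $x>0$.

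A spanning subgraph $\g$ of $\h$ has $V(\g)=V(\h)$, and each edge of $\g$ lies inside some component of $\h$. Hence $\g=\g^{(1)}\oplus\cdots\oplus\g^{(q)}$, where $\g^{(j)}$ is a spanning subgraph of $\h^{(j)}$, and $\mu(\g,x)=\prod_j\mu(\g^{(j)},x)$. Note that $\g^{(j)}$ need not be connected, but Lemma~\ref{spanningsucclemma} only requires $\h^{(j)}$ to be connected.

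For each $j$, either $\g^{(j)}=\h^{(j)}$, and then $\mu(\g^{(j)},x)=\mu(\h^{(j)},x)>0$ for $x>\lambda(\h)$; or $\g^{(j)}$ is a proper spanning subgraph of $\h^{(j)}$, and then Lemma~\ref{spanningsucclemma} gives $\mu(\g^{(j)},x)>\mu(\h^{(j)},x)$ for all $x\ge\lambda(\h^{(j)})$. In the second case this includes every $x>\lambda(\h)$, and since $\mu(\h^{(j)},x)>0$ there, $\mu(\g^{(j)},x)$ is positive too.

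Now compare products of positive numbers termwise: $\mu(\g,x)\ge\mu(\h,x)$ for all $x>\lambda(\h)$. If $\g$ is proper, at least one index falls into the second case, which makes the inequality strict. This proves the first assertion.

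For the second assertion, $\widehat{\g}\preccurlyeq\h$ requires $\mu(\widehat{\g},x)\ge\mu(\h,x)$ for every $x\ge\lambda(\h)$. If $\widehat{\g}=\h$ this is trivial. Otherwise the first part gives the strict inequality for $x>\lambda(\h)$, and continuity of polynomials extends $\ge$ to $x=\lambda(\h)$.

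The only point needing care is the strict positivity of the factors $\mu(\h^{(j)},x)$ for $x>\lambda(\h)$, since the termwise product comparison needs positive terms. This follows from monicity together with the definition of $\lambda$ as the maximum modulus of the roots, so there is no real obstacle.
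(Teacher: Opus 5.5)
Your proof is correct and follows essentially the same route as the paper: decompose $\h$ into connected components, apply Lemma~\ref{spanningsucclemma} to each component on which $\g$ is proper, and compare positive factors termwise using $\lambda(\h)=\max_j\lambda(\h^{(j)})$. The second assertion then follows by continuity at $x=\lambda(\h)$, and the only cosmetic difference is that you absorb the connected case as $q=1$ rather than treating it separately.
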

\begin{proof}
Assume that $\g$ is a proper spanning subgraph of a $k$-graph $\h$.
If $\h$ is connected, then the assertion follows from Lemma~\ref{spanningsucclemma}.
Suppose that $\h$ has $t\geq 2$ components $\h_1,\ldots,\h_t$.
Then $\g$ can be written as the disjoint union of
$\g_1,\ldots,\g_t$, where $\g_i$ is a spanning subgraph of $\h_i$ for every $1\leq i\leq t$.
Without loss of generality, one may assume that there exists an integer $1\leq j\leq t$ such that $\g_i$ is a proper spanning subgraph of $\h_i$ for every $1\leq i\leq j$ and $\g_i=\h _i$ for every $j< i\leq t$.
For every $1\leq i\leq j$,
we have $\g_i \prec \h_i$ by Lemma~\ref{spanningsucclemma},
which implies that $\mu(\g_i, x)> \mu(\h_i, x)>0$
when $x> \lambda(\h_i)$.
Also, for every $j< i\leq t$, we have $\mu(\g_i, x)=\mu(\h_i, x)>0$ when $x> \lambda(\h_i)$.
By Lemma~\ref{basiclemma}(a), we have $\lambda(\h)=\max_{1\leq i\leq t} \lambda(\h_i)$.
Hence, for every $x> \lambda(\h)=\max_{1\leq i\leq t} \lambda(\h_i)$, we have
$$\mu(\g, x)=\prod_{1\leq i\leq t}\mu(\g_i, x)> \prod_{1\leq i\leq t}\mu(\h_i, x) =\mu(\h, x),$$
as desired.
Using this, we deduce that $\mu(\g, x)\geq \mu(\h, x)$ for every $x\geq \lambda(\h)$, that is ${\g}\preccurlyeq \h$.
This implies that $\widehat{\g}\preccurlyeq \h$ if $\widehat{\g}$ is a spanning subgraph of $\h$.
\end{proof}

\begin{lemma}\label{Kelmanstprelma}
Let $\mathcal{H}$ be a $k$-graph with two vertices $u$ and $v$.
Then for every edge $e\in E_\h(u)\setminus E_\h(v)$, we have
$$\lambda(\mathbb{S}_{uv}(\h)) \geq \max\left \{\lambda(\mathbb{S}_{uv}(\h)\setminus e),\lambda(\h- e)\right \}.$$
Moreover, the inequality is strict when $\mathcal{H}$ is connected.
\end{lemma}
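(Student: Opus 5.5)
The plan is to bound both quantities directly by the subgraph monotonicity of Lemma~\ref{subgraphlagroot}. We use that $\lambda$ of a disjoint union is the maximum over its components (Lemma~\ref{basiclemma}(a)), that an isolated vertex has $\lambda=0$, and that $\lambda$ is invariant under isomorphism.

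\emph{First quantity.} Every edge containing $u$ is fixed by $\mathbb{S}_{uv}$. Since $u\in e$, we get $e\in E(\mathbb{S}_{uv}(\h))$, so $\mathbb{S}_{uv}(\h)\setminus e$ is a proper spanning subgraph of $\mathbb{S}_{uv}(\h)$. Applying Lemma~\ref{subgraphlagroot} componentwise gives $\lambda(\mathbb{S}_{uv}(\h)\setminus e)\le \lambda(\mathbb{S}_{uv}(\h))$.

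\emph{Second quantity.} We embed $\h-e$ into $\mathbb{S}_{uv}(\h)$ by renaming $v$ as $u$. Note $v\notin e$ and $u\in e$, so $v\in V(\h-e)$ and $u\notin V(\h-e)$. Define $\varphi:V(\h)\setminus e\to V(\h)$ by $\varphi(v)=u$ and $\varphi(w)=w$ otherwise; it is injective. We check that every edge $f$ of $\h-e$ satisfies $\varphi(f)\in E(\mathbb{S}_{uv}(\h))$; note that $u\notin f$.
\begin{itemize}
\item If $v\notin f$, then $\mathbb{S}_{uv}(f)=f=\varphi(f)$.
\item If $v\in f$ and $f':=(f\setminus\{v\})\cup\{u\}\in E(\h)$, then $f'$ contains $u$, hence is fixed by the shift, so $\varphi(f)=f'\in E(\mathbb{S}_{uv}(\h))$.
\item If $v\in f$ and $f'\notin E(\h)$, then $\mathbb{S}_{uv}(f)=f'=\varphi(f)$.
\end{itemize}
Thus $\h-e$ is isomorphic to a subgraph $\mathcal{K}$ of $\mathbb{S}_{uv}(\h)$ whose vertex set avoids $e\setminus\{u\}$ and $v$. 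Lemma~\ref{subgraphlagroot}, applied to each component of $\mathcal{K}$ inside the component of $\mathbb{S}_{uv}(\h)$ containing it, gives $\lambda(\h-e)=\lambda(\mathcal{K})\le\lambda(\mathbb{S}_{uv}(\h))$.

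\emph{Strictness for connected $\h$.} The main point is the connectivity of $\mathbb{S}_{uv}(\h)$. Every shifted edge $(f\setminus\{v\})\cup\{u\}$ keeps all vertices of $f$ except $v$ and adds $u$. Take any path in $\h$ from a vertex $w\ne v$ to $u$. Replacing each edge on it by its shift still yields a connected chain through $u$, because any replaced edge contains $u$ itself. Hence all vertices of $V(\h)\setminus\{v\}$ lie in a single component $\mathcal{C}$ of $\mathbb{S}_{uv}(\h)$, and the only other possible component is the isolated vertex $v$, which has $\lambda=0$. Therefore $\lambda(\mathbb{S}_{uv}(\h))=\lambda(\mathcal{C})>0$, since $\mathcal{C}$ contains $e$.

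\begin{itemize}
\item $\mathbb{S}_{uv}(\h)\setminus e$ consists of $\mathcal{C}\setminus e$, a proper subgraph of the connected $\mathcal{C}$, possibly together with the isolated $v$. Lemma~\ref{subgraphlagroot} on each component of $\mathcal{C}\setminus e$ gives $\lambda(\mathbb{S}_{uv}(\h)\setminus e)<\lambda(\mathcal{C})$.
\item $\mathcal{K}$ avoids $v$, so it lies in $\mathcal{C}$. Since $k\ge2$, the set $e\setminus\{u\}$ is nonempty, and its vertices belong to $\mathcal{C}$ but not to $\mathcal{K}$. So every component of $\mathcal{K}$ is a proper subgraph of $\mathcal{C}$, and Lemma~\ref{subgraphlagroot} gives $\lambda(\h-e)<\lambda(\mathcal{C})$.
\end{itemize}

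I expect the main obstacle to be the strictness step, specifically the connectivity analysis: $\mathbb{S}_{uv}(\h)$ need not be connected, because $v$ may become isolated. The fix is that this only adds an isolated vertex with $\lambda=0$, and all the relevant subgraphs live inside the main component $\mathcal{C}$.
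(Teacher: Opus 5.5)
Your proof follows the paper's argument: subgraph monotonicity for $\mathbb{S}_{uv}(\mathcal{H})\setminus e$, the embedding of $\mathcal{H}-e$ into $\mathbb{S}_{uv}(\mathcal{H})$ by renaming $v$ as $u$, and strictness from the fact that $\mathbb{S}_{uv}(\mathcal{H})$ is either connected or one component plus the isolated vertex $v$. Three of your details are fine and a bit tidier than the paper's text:
\begin{itemize}
\item you check explicitly that each renamed edge lies in $\mathbb{S}_{uv}(\mathcal{H})$;
\item you apply Lemma~\ref{subgraphlagroot} componentwise, which matters because that lemma assumes a connected host;
\item you get properness from the missing vertices $e\setminus\{u\}$, rather than from the paper's non-surjectivity of the edge map.
\end{itemize}

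There is one slip, in your justification of the connectivity claim (the paper only asserts this claim). For an arbitrary $w$--$u$ path, replacing each edge by its shift need not give a connected chain.

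Take $k=3$ with edges $\{w,v,a\},\{w,u,a\},\{v,b,c\},\{b,u,d\}$ and the path $(w,\{w,v,a\},v,\{v,b,c\},b,\{b,u,d\},u)$. The first edge is not shifted, because its shift $\{w,u,a\}$ is already an edge. The second edge becomes $\{u,b,c\}$. So the first edge no longer meets the rest of the chain, and $w$ is cut off from $u$ within the chain.

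The repair is short. Write the path as $(x_0,g_1,x_1,\ldots,g_\ell,x_\ell)$. The chain can only break at a junction $x_j=v$.
\begin{itemize}
\item If $g_j$ was shifted, or if $u\in g_j$, then the image of $g_j$ already contains both $x_{j-1}$ and $u$.
\item Otherwise $(g_j\setminus\{v\})\cup\{u\}\in E(\mathcal{H})$. This edge is fixed by the shift and contains both $x_{j-1}$ and $u$.
\end{itemize}
Alternatively, choose a shortest $w$--$u$ path; then the bad configuration cannot occur, since that edge would give a shorter path. With this fix your proof is complete.
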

\begin{proof}
Note that $e=\mathbb{S}_{uv}(e)\in E(\mathbb{S}_{uv}(\h))$,
so $\mathbb{S}_{uv}(\h)\setminus e$ is a proper subgraph of $\mathbb{S}_{uv}(\h)$.
By Lemma~\ref{subgraphlagroot}, we have $\lambda(\mathbb{S}_{uv}(\h))
\geq \lambda(\mathbb{S}_{uv}(\h)\setminus e)$.
To prove $\lambda(\mathbb{S}_{uv}(\h))\geq \lambda(\h- e)$, we
define the map
$\phi: E(\h- e) \to E(\mathbb{S}_{uv}(\h))$ by
\[\phi(g)
=\left\{
 \begin{array}{ll}
g, &  \mbox{if~} g\in E(\h- e) {\rm~and~} v\notin g;\\
(g\setminus \{v\})\cup\{u\}, & \mbox{if~} g\in E(\h- e) {\rm~and~} v\in g.
  \end{array}\right.
\]
Observe that $\phi$ is injective, and we next show that $\phi$ is not surjective.
If $(f\setminus \{v\})\cup\{u\}= e$ for an edge $f\in E(\h- e)$ with $v\in f$, then $f\notin \phi(E(\h- e))$ and $f\in E(\mathbb{S}_{uv}(\h))$.
If $(g\setminus \{v\})\cup\{u\}\neq e$ for every edge $g\in E(\h- e)$ with $v\in g$, then $e \notin \phi(E(\h- e))$.
This implies that $\phi$ is not surjective.
So, the $k$-graph $\mathcal{G}=( (V(\h- e)\setminus\{v\}) \cup \{u\}, \phi(E(\h- e))$ is a proper subgraph of $\mathbb{S}_{uv}(\h)$.
Clearly, the map $\varphi: V(\h- e)\to V(\mathcal{G})$ defined by
\[\varphi(w)
=\left\{
 \begin{array}{ll}
u, &  \mbox{if~} w=v;\\
w, & \mbox{if~} w\neq v.
  \end{array}\right.
\]
is an isomorphism between $\h-e$ and $\mathcal{G}$.
Thus, $\h-e$ is isomorphic to a proper subgraph of $\mathbb{S}_{uv}(\h)$.
We deduce that
$\lambda(\mathcal{G})\leq \lambda(\mathbb{S}_{uv}(\h))$
by Lemma~\ref{subgraphlagroot}.

We further assume that $\h$ is connected.
Then either $\mathbb{S}_{uv}(\h)$ is connected or $\mathbb{S}_{uv}(\h)$ is comprised of $v$ and a component, say $\g$.
If $\mathbb{S}_{uv}(\h)$ is connected, then the above two inequalities are strict by Lemma~\ref{subgraphlagroot}.
Otherwise, one may check that
$$\lambda(\mathbb{S}_{uv}(\h))=\lambda(\g)
> \max\left \{\lambda(\mathbb{S}_{uv}(\h)\setminus e),\lambda(\h- e)\right \}$$
by Lemma~\ref{subgraphlagroot}.
\end{proof}

%\newpage

We are now ready to show the main result of this section.
As a consequence, Theorem~\ref{KelmTranshypergraph} follows from  Lemma~\ref{succrelation}.

\begin{theorem}\label{Kelmansttheorem11}
Let $\mathcal{H}$ be a $k$-graph with two vertices $u$ and $v$.
Then
$$\h\preccurlyeq \mathbb{S}_{uv}(\mathcal{H}).$$
Moreover, if $\mathcal{H}$ is connected and is not isomorphic to $\mathbb{S}_{uv}(\mathcal{H})$, and there exists an edge $e\in E_{\mathcal{H}}(u)\setminus E_{\mathcal{H}}(v)$ such that $\mathbb{S}_{uv}(\mathcal{H})-e$ is a proper subgraph of $\mathcal{H}-e$, then $\mathcal{H} \prec \mathbb{S}_{uv}(\mathcal{H})$.
\end{theorem}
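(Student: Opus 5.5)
The plan is to prove $\h\preccurlyeq\mathbb{S}_{uv}(\h)$ by induction on $|E(\h)|$, comparing the edge-deletion recursions of Lemma~\ref{basiclemma}(b) for $\h$ and for $\mathbb{S}_{uv}(\h)$ at the same edge $e$. The natural choice is $e\in E_\h(u)\setminus E_\h(v)$, because such an edge satisfies $\mathbb{S}_{uv}(e)=e$ and so lies in both $k$-graphs.

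\emph{Reductions.} If $E_\h(u)\setminus E_\h(v)=\emptyset$, no $v$-edge is blocked, so every edge of $E_\h(v)\setminus E_\h(u)$ is shifted. Then $\mathbb{S}_{uv}(\h)$ is just $\h$ with $u$ and $v$ relabelled, so the two have the same matching polynomial. By the same reasoning we may assume $\mathbb{S}_{uv}(\h)\neq\h$; the case $|E(\h)|=0$ is trivial. Applying Lemma~\ref{basiclemma}(b) to both $k$-graphs at the same $e$ gives
\[
\mu(\mathbb{S}_{uv}(\h),x)-\mu(\h,x)=\bigl[\mu(\mathbb{S}_{uv}(\h)\setminus e,x)-\mu(\h\setminus e,x)\bigr]+\bigl[\mu(\h-e,x)-\mu(\mathbb{S}_{uv}(\h)-e,x)\bigr].
\]

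\emph{The second bracket.} Every edge of $\mathbb{S}_{uv}(\h)$ that avoids $e$ avoids $u$. Such an edge is therefore an unshifted edge of $\h$, and $\mathbb{S}_{uv}(\h)-e$ is a spanning subgraph of $\h-e$. Corollary~\ref{spanningsucclemma22} then makes the second bracket $\le 0$ for $x\ge\lambda(\h-e)$, and Lemma~\ref{Kelmanstprelma} gives $\lambda(\mathbb{S}_{uv}(\h))\ge\lambda(\h-e)$. This also settles the ``moreover'' part. When $\mathbb{S}_{uv}(\h)-e$ is a proper subgraph of $\h-e$, the first part of Corollary~\ref{spanningsucclemma22} makes the bracket strictly negative for $x>\lambda(\h-e)$. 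The strict inequality in Lemma~\ref{Kelmanstprelma} for connected $\h$ gives $\lambda(\mathbb{S}_{uv}(\h))>\lambda(\h-e)$, so the bracket is strictly negative on all of $x\ge\lambda(\mathbb{S}_{uv}(\h))$.

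\emph{The first bracket.} Here I want $\mathbb{S}_{uv}(\h)\setminus e=\mathbb{S}_{uv}(\h\setminus e)$. Then induction gives $\h\setminus e\preccurlyeq\mathbb{S}_{uv}(\h\setminus e)$, which makes the bracket $\le0$ for $x\ge\lambda(\mathbb{S}_{uv}(\h)\setminus e)$, a range that contains $x\ge\lambda(\mathbb{S}_{uv}(\h))$ by Lemma~\ref{Kelmanstprelma}. The identity fails only if the ``mate'' $e'=(e\setminus\{u\})\cup\{v\}$ lies in $E(\h)$. In that case $e'$ is blocked in $\h$ but shifts to $e$ in $\h\setminus e$. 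So I first try to choose $e\in E_\h(u)\setminus E_\h(v)$ whose mate is absent, and then the argument above closes.

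\emph{Main obstacle.} The hard case is when every $u$-only edge $e$ has its mate $e'$ in $\h$. There I will delete the pair $\{e,e'\}$ at once, using
\[
\mu(\h)=\mu(\h\setminus\{e,e'\})-\mu(\h-e)-\mu((\h\setminus e)-e').
\]
The same identity holds for $\mathbb{S}_{uv}(\h)$, since both $e$ and $e'$ survive the shift. Shifting commutes with removing the whole pair, so induction handles the $\h\setminus\{e,e'\}$ terms. It remains to show
\[
\mu(\h-e)+\mu((\h\setminus e)-e')\le\mu(\mathbb{S}_{uv}(\h)-e)+\mu((\mathbb{S}_{uv}(\h)\setminus e)-e')
\]
for large $x$. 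I would prove this by crossing the terms with the relabelling $u\leftrightarrow v$, which should map $(\mathbb{S}_{uv}(\h)\setminus e)-e'$ onto a $k$-graph sharing a spanning subgraph with $\h-e$ and vice versa. This cross-pairing is the step I expect to need the most care; the rest is bookkeeping with Corollary~\ref{spanningsucclemma22} and Lemma~\ref{Kelmanstprelma}.
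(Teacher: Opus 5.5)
Your scheme is the paper's: induct on $|E(\mathcal{H})|$, delete a $u$-only edge $e$ in both $k$-graphs, apply induction to $\mathcal{H}\setminus e$, and use the spanning inclusion $\mathbb{S}_{uv}(\mathcal{H})-e\subseteq\mathcal{H}-e$ together with Lemma~\ref{Kelmanstprelma}. You are right that $\mathbb{S}_{uv}(\mathcal{H})\setminus e=\mathbb{S}_{uv}(\mathcal{H}\setminus e)$ fails exactly when the mate $e'=(e\setminus\{u\})\cup\{v\}$ is an edge; the paper asserts this identity for every $u$-only $e$.

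For the first assertion, however, your hard case is empty. Let $A$ be the family of $(k-1)$-sets $g\setminus\{u\}$ with $g\in E_{\mathcal{H}}(u)\setminus E_{\mathcal{H}}(v)$, and $B$ the family of sets $g\setminus\{v\}$ with $g\in E_{\mathcal{H}}(v)\setminus E_{\mathcal{H}}(u)$. ``Every $u$-only edge has its mate'' means $A\subseteq B$. The shift then makes the $u$-links $A\cup B=B$ and the $v$-links $A\cap B=A$, so $\mathbb{S}_{uv}(\mathcal{H})$ is $\mathcal{H}$ with $u$ and $v$ swapped, just as in your case $A=\emptyset$. Hence a mate-free $e$ exists whenever $\mathbb{S}_{uv}(\mathcal{H})\not\simeq\mathcal{H}$, and $\mathcal{H}\preccurlyeq\mathbb{S}_{uv}(\mathcal{H})$ follows with no pair deletion.

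Your cross-pairing would not close by spanning-subgraph comparisons anyway. After relabelling, $\mathcal{H}-e$ is a spanning subgraph of $(\mathbb{S}_{uv}(\mathcal{H})\setminus e)-e'$, while $\mathbb{S}_{uv}(\mathcal{H})-e$ is a spanning subgraph of $(\mathcal{H}\setminus e)-e'$, so the two comparisons point in opposite directions. What actually holds is an exact identity. The four $k$-graphs are one common $k$-graph plus a star at a single vertex, with link families $B_a$, $A_a$, $(A\cap B)_a$, $(A\cup B)_a$; here $X_a$ denotes the members of $X$ disjoint from $a=e\setminus\{u\}$. Since $\mu$ is affine in the link family, when $e'\in E(\mathcal{H})$ one gets $\mu(\mathcal{H},x)-\mu(\mathbb{S}_{uv}(\mathcal{H}),x)=\mu(\mathcal{H}_0,x)-\mu(\mathbb{S}_{uv}(\mathcal{H}_0),x)$ with $\mathcal{H}_0=\mathcal{H}\setminus\{e,e'\}$.

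The genuine gap is the ``moreover'' part. There $e$ is prescribed, so you cannot pick it mate-free, and your remark that the argument ``also settles'' it silently needs $e'\notin E(\mathcal{H})$ for the first bracket. No repair is possible, because the strict assertion is false in that case. Take $k=3$, $V=\{u,v,1,2,3,4,5\}$ and $E(\mathcal{H})=\{\{u,1,2\},\{u,3,4\},\{v,1,2\},\{v,4,5\}\}$.
\begin{itemize}
\item $\mathcal{H}$ is connected.
\item The shift blocks $\{v,1,2\}$ and only turns $\{v,4,5\}$ into $\{u,4,5\}$, so $u$ gets degree $3$ and $\mathbb{S}_{uv}(\mathcal{H})\not\simeq\mathcal{H}$.
\item For $e=\{u,1,2\}$, $\mathcal{H}-e$ contains the edge $\{v,4,5\}$ while $\mathbb{S}_{uv}(\mathcal{H})-e$ has no edges, so the hypothesis holds.
\item Yet both $k$-graphs have four edges and exactly two $2$-matchings, so $\mu(\mathcal{H},x)=\mu(\mathbb{S}_{uv}(\mathcal{H}),x)=x^7-4x^4+2x$.
\end{itemize}
In your decomposition the first bracket is $(x^7-3x^4+2x)-(x^7-3x^4+x)=x>0$. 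It exactly cancels the strictly negative second bracket, $(x^4-x)-x^4=-x$.

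The paper's proof breaks at the same identity. This example also satisfies the hypotheses of Corollary~\ref{KelmTransHGCor} with $f=\{v,4,5\}$, and it refutes the strict part of Theorem~\ref{KelmTranshypergraph}. The strict statement, and your proof of it, become correct once one adds the hypothesis $(e\setminus\{u\})\cup\{v\}\notin E(\mathcal{H})$.
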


\begin{proof}
%Let $\mathcal{H}$ be a $k$-graph with two vertices $u$ and $v$.
We prove $\h\preccurlyeq \mathbb{S}_{uv}(\mathcal{H})$ by induction on the size of $|E(\h)|$.
If $|E(\h)|=1$, then the statement is valid as $\mathbb{S}_{uv}(\h)\simeq\mathcal{H}$.
Assume that $|E(\h)|\geq 2$ and the statement holds for all $k$-graphs with size less than $|E(\h)|$.
If $\mathbb{S}_{uv}(\h)\simeq\mathcal{H}$, then $\h\preccurlyeq \mathbb{S}_{uv}(\mathcal{H})$ is trivial.
If $\mathbb{S}_{uv}(\h)$ is not isomorphic to $\mathcal{H}$,
then there exist an edge $e\in E_\h(u)\setminus E_\h(v)$.
%and an edge $f\in E_\h(v)\setminus E_\h(u)$ satisfying $\mathbb{S}_{uv}(f)\notin E(\h)$.
By Lemma~\ref{basiclemma}(b), we have
\begin{equation}\label{detedheKteq}
\mu(\h, x)-\mu(\mathbb{S}_{uv}(\h), x)
= \mu(\h\setminus e, x)-\mu(\mathbb{S}_{uv}(\h)\setminus e, x)
+\mu(\mathbb{S}_{uv}(\h)-e, x)-\mu(\h-e, x).
\end{equation}
Recall that $e\in E_\h(u)\setminus E_\h(v)$,
so $\mathbb{S}_{uv}(e)=e$, and hence that $\mathbb{S}_{uv}(\h\setminus e)=\mathbb{S}_{uv}(\h)\setminus e$.
As $|E(\h\setminus e)|<|E(\h)|$, the induction hypothesis gives
${\h}\setminus e\preccurlyeq \mathbb{S}_{uv}(\h\setminus e)$, and we deduce that
\begin{equation}\label{detedheKteq22331}
\mu(\h\setminus e, x)-\mu(\mathbb{S}_{uv}(\h)\setminus e, x)\geq 0 {\rm ~whenever~}x\geq \lambda(\mathbb{S}_{uv}(\h)\setminus e).
\end{equation}
Also, one may check that $\mathbb{S}_{uv}(\h)-e$ is a spanning subgraph of $\h-e$.
By Corollary \ref{spanningsucclemma22},
we get $\mathbb{S}_{uv}(\h)-e \preccurlyeq\h-e$, that is,
\begin{equation}\label{detedheKteq22332}
\mu(\mathbb{S}_{uv}(\h)- e, x)- \mu(\h-e, x)\geq 0
{\rm ~whenever~} x \geq \lambda(\h- e).
\end{equation}
Applying~\eqref{detedheKteq22331} and~\eqref{detedheKteq22332}  to~\eqref{detedheKteq},
we deduce that
$$%\begin{equation}\label{detedheKteq22}
\mu(\h, x)-\mu(\mathbb{S}_{uv}(\h), x)\geq 0
{\rm ~whenever~} x\geq \max\left\{\lambda(\mathbb{S}_{uv}(\h)\setminus e), \lambda({\h}- e)\right\}.
$$%\end{equation}
Combining this fact and Lemma~\ref{Kelmanstprelma}, we get
$\h\preccurlyeq \mathbb{S}_{uv}(\mathcal{H})$, which completes the induction.

Moreover, if $\mathcal{H}$ is connected and is not isomorphic to $\mathbb{S}_{uv}(\mathcal{H})$,
and $\mathbb{S}_{uv}(\h)-e$ is a proper spanning subgraph of $\h-e$,
then by Corollary \ref{spanningsucclemma22} we get that
\begin{equation}\label{detedheKteq22333}
\mu(\mathbb{S}_{uv}(\h)- e, x)- \mu(\h-e, x)> 0
{\rm ~whenever~} x> \lambda(\h- e).
\end{equation}
Applying~\eqref{detedheKteq22331} and~\eqref{detedheKteq22333}  to~\eqref{detedheKteq},
we deduce that
$$%\begin{equation}\label{detedheKteq22}
\mu(\h, x)-\mu(\mathbb{S}_{uv}(\h), x)> 0
{\rm ~whenever~} x> \max\left\{\lambda(\mathbb{S}_{uv}(\h)\setminus e), \lambda({\h}- e)\right\}.
$$%\end{equation}
Combining this fact and Lemma~\ref{Kelmanstprelma}, we get
$\h\prec \mathbb{S}_{uv}(\mathcal{H}).$
This completes the proof.
\end{proof}

%\newpage

\section{Proof of Theorem~\ref{MainresutCactus}}
\label{sectioncatus}

This section is devoted to proving Theorem~\ref{MainresutCactus}.
We begin with the following lemma which will be used later.

\begin{lemma}\label{degreeonelemma}
Let $\g$ and $\h$ be two vertex-disjoint connected $k$-graphs with $u,v\in V(\g), w\in V(\h)$ and $|E(\h)|\geq 1$.
If $u$ and $v$ are adjacent in $\g$ such that $d_\g(u)>1$ and $d_\g(v)=1$,
then %we have %$\g(v)\ast \h(w)\prec\g(u)\ast \h(w).$
%In particular,
$$\lambda(\g(v)\ast \h(w))< \lambda(\g(u)\ast \h(w)).$$
\end{lemma}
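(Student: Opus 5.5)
The plan is to realize $\g(u)\ast\h(w)$ as a single $(u,v)$-shift of $\mathcal{K}:=\g(v)\ast\h(w)$, and then get the strict inequality from the second part of Theorem~\ref{Kelmansttheorem11} (equivalently Theorem~\ref{KelmTranshypergraph}). In $\mathcal{K}$ the vertex $w$ is identified with $v$.

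First I would compute $\mathbb{S}_{uv}(\mathcal{K})$. Since $d_\g(v)=1$ and $u,v$ are adjacent, the only edge of $\g$ containing $v$ is one edge $f$, and $u\in f$. So $\mathbb{S}_{uv}(f)=f$. Every other edge of $\mathcal{K}$ containing $v$ is an edge $g$ of $\h$ through $w$, and $u\notin g$. Its image $(g\setminus\{v\})\cup\{u\}$ mixes vertices of $\h$ with $u\in V(\g)$, so it is not an edge of $\mathcal{K}$. Hence every such $g$ is genuinely moved to $u$, and all remaining edges are fixed. In $\mathbb{S}_{uv}(\mathcal{K})$ the vertex $v$ lies only in $f$, and the copy of $\h$ is now attached at $u$. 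Therefore $\mathbb{S}_{uv}(\mathcal{K})\simeq\g(u)\ast\h(w)$, and Theorem~\ref{KelmTranshypergraph} already gives the weak inequality.

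For strictness I would pick the edge $e$ required by Theorem~\ref{KelmTranshypergraph}. Since $d_\g(u)>1$ and $f$ is the only edge of $\g$ through $v$, there is an edge $e\in E_\g(u)$ with $v\notin e$. Then $e\in E_{\mathcal{K}}(u)\setminus E_{\mathcal{K}}(v)$. Now compare $\mathcal{K}-e$ with $\mathbb{S}_{uv}(\mathcal{K})-e$:
\begin{itemize}
\item both have vertex set $V(\mathcal{K})\setminus e$;
\item $\mathcal{K}-e$ contains all edges of $\h$, since $v\notin e$;
\item in $\mathbb{S}_{uv}(\mathcal{K})-e$ the shifted edges of $\h$ all contain $u\in e$, so they are deleted.
\end{itemize}
Because $\h$ is connected with $|E(\h)|\ge 1$, the vertex $w$ lies in at least one edge of $\h$. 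So $\mathbb{S}_{uv}(\mathcal{K})-e$ is a proper (spanning) subgraph of $\mathcal{K}-e$. Also $\mathcal{K}$ is connected.

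The main obstacle is the hypothesis in Theorem~\ref{KelmTranshypergraph} that $\mathbb{S}_{uv}(\mathcal{K})\not\simeq\mathcal{K}$, which is awkward to check directly. I would sidestep it by noting that the proof of Theorem~\ref{Kelmansttheorem11} uses non-isomorphism only to produce an edge $e\in E(u)\setminus E(v)$, and we already have one. Concretely, I would rerun that argument with this $e$:
\begin{itemize}
\item identity \eqref{detedheKteq} holds;
\item the induction hypothesis (the weak part of Theorem~\ref{Kelmansttheorem11} applied to $\mathcal{K}\setminus e$) gives the nonnegativity \eqref{detedheKteq22331};
\item Corollary~\ref{spanningsucclemma22} gives the strict positivity \eqref{detedheKteq22333} for $x>\lambda(\mathcal{K}-e)$;
\item Lemma~\ref{Kelmanstprelma}, using connectivity, gives $\lambda(\mathbb{S}_{uv}(\mathcal{K}))>\max\{\lambda(\mathbb{S}_{uv}(\mathcal{K})\setminus e),\lambda(\mathcal{K}-e)\}$.
\end{itemize}
Together these yield $\mu(\mathcal{K},x)>\mu(\mathbb{S}_{uv}(\mathcal{K}),x)$ for all $x\ge\lambda(\mathbb{S}_{uv}(\mathcal{K}))$, that is, $\mathcal{K}\prec\mathbb{S}_{uv}(\mathcal{K})$. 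Lemma~\ref{succrelation} then gives $\lambda(\g(v)\ast\h(w))<\lambda(\g(u)\ast\h(w))$.

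As a by-product, the strict inequality itself shows $\mathcal{K}\not\simeq\mathbb{S}_{uv}(\mathcal{K})$, so the result is consistent with Theorem~\ref{KelmTranshypergraph} as stated.
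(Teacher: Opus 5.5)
Your proposal is correct and matches the paper's proof. The paper also identifies $\g(u)\ast\h(w)$ with $\mathbb{S}_{uv}(\g(v)\ast\h(w))$, and gets strictness from Corollary~\ref{KelmTransHGCor} using an edge $e\in E_\g(u)\setminus E_\g(v)$ and an edge $f\in E_\h(w)$; that pair is exactly your witness that $\mathbb{S}_{uv}(\g(v)\ast\h(w))-e$ is a proper spanning subgraph of $(\g(v)\ast\h(w))-e$. The one difference is that the paper simply asserts the non-isomorphism, whereas you correctly note that the strict part of the proof of Theorem~\ref{Kelmansttheorem11} never uses it once $e$ is given, so non-isomorphism follows from the strict inequality rather than being needed as input.
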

\begin{proof}
Note that
$\mathbb{S}_{uv}(\g(v)\ast \h(w))
=\g(u)\ast \h(w)$ and it is not isomorphic to $\g(v)\ast \h(w)$.
According to the hypothesis on $u$ and $v$, we can pick edges $e\in E_\g(u)\setminus E_\g(v)$ and $f\in E_\h(w)$.
Observe that $e\cap f= \emptyset$ and $\mathbb{S}_{uv}(f)\notin E_{\g(v)\ast \h(w)}(u)$,
so the result follows from Corollary~\ref{KelmTransHGCor}.
\end{proof}

A $k$-graph is called a {\it $k$-star} with $m$ edges, denoted by $\mathcal{S}_m^{(k)}$,
if its vertex set admits a partition
$\{v\}\cup V_1 \cup\cdots  \cup V_m $
with $|V_1| =\cdots =|V_m| =k-1$
and the edge set is $\{\{v\}\cup V_i : i = 1,\ldots,m\}$.
The vertex $v$ is called the {\it center} of $\mathcal{S}_m^{(k)}$.

Let $\h$ be a $k$-graph.
An edge $e\in E(\h)$ is called a {\it pendant edge} of $\h$ if it contains at least $k-1$ vertices of degree one.
If $\h$ can be written as $\G(u)\ast \T(v)$, where $\G$ is a $k$-graph and $\T$ is a $k$-tree, then we call $\T$ a {\it pendant $k$-tree} of $\h$ with respect to the {\it root} $v$.
A pendant $k$-tree is said to be  {\it maximal} if it is not a proper subgraph of any other pendant $k$-tree.

\begin{lemma}\label{pendanttreelemma}
Let $\h$ and $\T$ be two vertex-disjoint connected $k$-graphs, and let $u\in V(\h)$ and $v\in V(\T)$.
If $\T$ is a $k$-tree with $m$ edges, then
$$\lambda(\h(u)\ast \T(v))
\leq
\lambda(\h(u)\ast \mathcal{S}^{(k)}_m(v)),$$
where $v$ is the center of $\mathcal{S}^{(k)}_m$,
and the equality holds if and only if
$\h(u)\ast \T(v) \simeq
\h(u)\ast \mathcal{S}^{(k)}_m(v)$.
\end{lemma}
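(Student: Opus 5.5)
We argue by induction on the number of non-pendant edges of $\T$, or more concretely, on the quantity $m - d_\T(v)$. If $d_\T(v)=m$, then every edge of $\T$ contains $v$. Since $\T$ is a $k$-tree, any two of these edges meet only in $v$, so $\T\simeq \mathcal{S}^{(k)}_m$ with center $v$, and we have equality. For the equality statement, it then suffices to show that whenever $\T\not\simeq\mathcal{S}^{(k)}_m$ (with $v$ as the center), we can find a shifting that strictly increases $\lambda$ and raises $d_\T(v)$ while keeping the tree rooted at $v$ with $m$ edges.

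Suppose $d_\T(v)<m$. Since $\T$ is connected, there is an edge $g\in E_\T(v)$ and a vertex $w\in g\setminus\{v\}$ with $d_\T(w)\geq 2$. Put $\h_0:=\h(u)\ast\T(v)$, where $u$ and $v$ are identified; call the common vertex $v$. Consider the shift $\mathbb{S}_{vw}(\h_0)$. Every edge $f\ni w$ with $f\neq g$ lies in $\T$ and does not contain $v$, because $\T$ is a tree and $g$ is the only edge containing both $v$ and $w$. The edge $(f\setminus\{w\})\cup\{v\}$ is not already an edge, since otherwise it would share the $k-1\geq 1$ vertices $f\setminus\{w\}$ with $f$ and create a cycle in $\T$. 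Hence all such $f$ move to $v$, and $\mathbb{S}_{vw}(\h_0)=\h(u)\ast\T'(v)$. Here $\T'$ is again a $k$-tree with $m$ edges (the branches hanging at $w$ are re-hung at $v$), and $d_{\T'}(v)>d_\T(v)$. By Theorem~\ref{KelmTranshypergraph}, $\lambda(\h_0)\le\lambda(\mathbb{S}_{vw}(\h_0))$, and the induction hypothesis applied to $\T'$ gives the inequality.

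For strictness we use Corollary~\ref{KelmTransHGCor}, which requires an edge $e\ni v$ and an edge $f\ni w$ with $e\cap f=\emptyset$ and $\mathbb{S}_{vw}(f)\notin E(u)$. Take $f\ni w$ with $f\neq g$; it is disjoint from $v$. We need $e\in E_{\h_0}(v)$ disjoint from $f$. If $|E(\h)|\geq 1$, any edge of $\h$ at $v$ works. If $\h$ is a single vertex and $d_\T(v)\ge2$, any edge of $E_\T(v)$ other than $g$ works. Finally, $\mathbb{S}_{vw}(f)=(f\setminus\{w\})\cup\{v\}$ is a new edge, so it is not in $E_{\h_0}(v)$. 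We must also check that the shift is non-isomorphic. This follows from the strict inequality chain, but Corollary~\ref{KelmTransHGCor} demands non-isomorphism as a hypothesis, so we verify it directly, for example by comparing the sizes of the maximal pendant structures, or more simply by invoking Theorem~\ref{Kelmansttheorem11}'s criterion that $\mathbb{S}_{vw}(\h_0)-e$ is a proper subgraph of $\h_0-e$, which gives strictness without any isomorphism discussion.

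The degenerate case, where $\h$ is trivial and $d_\T(v)=1$, is the main obstacle: there is no edge at $v$ disjoint from $f$. In that situation we swap roles. Note that $\h(u)\ast\T(v)\simeq\T$, and it suffices to compare $\T$ with the star; $\lambda(\mathcal{S}^{(k)}_m)$ is the largest among $k$-trees with $m$ edges. We can obtain this either by choosing a different pair $(w',w)$ with a suitable edge $e$ at $w'$, or by applying the argument above rooted at a vertex of degree at least $2$. Since the statement for trivial $\h$ only asks whether $\T$ is isomorphic to the star, the root choice is free, and we may assume $d_\T(v)\ge2$ unless $m=1$, which is trivial.
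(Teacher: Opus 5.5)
Your proposal is correct and is essentially the paper's argument: your shift $\mathbb{S}_{vw}$ is exactly the move made in Lemma~\ref{degreeonelemma}, namely re-hanging the branches at a neighbour $w$ of $v$ onto $v$, justified by Corollary~\ref{KelmTransHGCor}. Your induction on $m-d_\T(v)$ replaces the paper's choice of a maximizer, and your re-rooting at a vertex of degree at least $2$ when $\h$ is trivial replaces its splitting $\T=\T_1(w)\ast\T_2(w)$. One correction: Theorem~\ref{Kelmansttheorem11} as stated also assumes non-isomorphism, so citing it does not let you skip that check. The check is immediate here, though: the shift lowers the number of $2$-matchings by $(d_\T(w)-1)(d_{\h_0}(v)-1)>0$, so the matching polynomials differ.
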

\begin{proof}
We begin with the case of $|E(\h)|\geq 1$.
Write $$\mathfrak{H}_m=\{\h(u)\ast \mathcal{F}(v): \mathcal{F} \text{ is a $k$-tree with $m$ edges and}~v\in V(F)\}.$$
Let $\h(u)\ast \T(v)$ be a maximizing $k$-graph in $\mathfrak{H}_m$.
If $\T$ is not a $k$-star with center $v$, then there exist two edges $e,f\in E(\T)$ and a vertex $z$ such that
$v\in e$, $v\notin f$ and $e\cap f =\{z\}$.
As $\T$ is a $k$-tree, $\h(u)\ast \T(v)$ can be uniquely written as $\widehat{\h}(z)\ast \widehat{\T}(z)$,
where $\widehat{\T}$ is a subtree of $\T$ and $\widehat{\h}$ is a $k$-graph containing $\h$ such that $d_{\widehat{\h}}(z)=1$.
By Lemma~\ref{degreeonelemma}, we get
$$\lambda(\widehat{\h}(z)\ast \widehat{\T}(z))<\lambda(\widehat{\h}(v)\ast \widehat{\T}(z)).$$
Clearly, we have $\widehat{\h}(v)\ast \widehat{\T}(z)\in \mathfrak{H}_m$, which contradicts the maximality of $\h(u)\ast \T(v)$ in $\mathfrak{H}_m$.
%Clearly, we have $\mathbb{S}_{vz}(\h(u)\ast \T(v))\in \mathfrak{H}_m$, and $f\cap g=\emptyset$ for every $g\in E_{\h}(v)$.
% Theorem~\ref{KelmTranshypergraph} implies that
%$\lambda(\h(u)\ast \T(v))<\lambda(\mathbb{S}_{vz}(\h(u)\ast \T(v)))$, which contradicts the maximality of $\h(u)\ast \T(v)$ in $\mathfrak{H}_m$.
%++++++++++
For the case of $|E(\h)|=0$, we have $\h(u)\ast \T(v)=\T$.
If $\T$ is not a $k$-star, then we can write $\T=\T_1(w)\ast \T_2(w)$, where $|E(\T_1)|\geq 1$ and $|E(\T_2)|\geq 1$.
Applying the obtained result, we get that $\T_1$ and $\T_2$ are both $k$-stars with center $w$, as desired.
\end{proof}

As an application of Theorem~\ref{largestrootdeflemma} and Lemma~\ref{pendanttreelemma}, we have the following important result in spectral hypergraph theory.

\begin{corollary}[\cite{LiShaoQi}]\label{extremaltreecorollary}
If $\T$ is a $k$-tree with $m$ edges, then
$$\rho(\T)\leq \rho(\mathcal{S}^{(k)}_m),$$ with equality if and only if
$\T\simeq\mathcal{S}^{(k)}_m$.
\end{corollary}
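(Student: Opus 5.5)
The plan is to transfer the spectral-radius question to a largest-matching-root question, and then apply Lemma~\ref{pendanttreelemma} with a trivial host $k$-graph.

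First, since $\T$ is a $k$-tree it is connected and acyclic. For any linear ordering $\prec$ on $V(\T)$ and any vertex $u$, the $k$-walk-tree $\T(\T,\prec,u)$ is isomorphic to $\T$, as noted after Definition~\ref{defwalktree}. Theorem~\ref{largestrootdeflemma} then gives
$$\lambda(\T)=\rho(\T(\T,\prec,u))=\rho(\T).$$
The same reasoning applied to the $k$-tree $\mathcal{S}^{(k)}_m$ gives $\lambda(\mathcal{S}^{(k)}_m)=\rho(\mathcal{S}^{(k)}_m)$. It therefore suffices to prove that $\lambda(\T)\le\lambda(\mathcal{S}^{(k)}_m)$, with equality only when $\T\simeq\mathcal{S}^{(k)}_m$.

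Second, I apply Lemma~\ref{pendanttreelemma} with $\h$ a single vertex $u$, so that $|E(\h)|=0$. This is allowed because that lemma covers the case $|E(\h)|=0$, where $\h(u)\ast\T(v)=\T$. I need a vertex $v$ of $\T$ to identify with $u$. Any $v$ works for the inequality: the lemma gives $\lambda(\T)\le\lambda(\mathcal{S}^{(k)}_m)$, with equality if and only if $\T\simeq\mathcal{S}^{(k)}_m$ with $v$ as center. In the equality case $\T$ is then a $k$-star, which is exactly what we need.

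The step I expect to be delicate is the equality clause. Lemma~\ref{pendanttreelemma} states equality only when $\h(u)\ast\T(v)\simeq\h(u)\ast\mathcal{S}^{(k)}_m(v)$. With $\h$ trivial this reads $\T\simeq\mathcal{S}^{(k)}_m$, so the clause transfers directly. Conversely, if $\T\simeq\mathcal{S}^{(k)}_m$, equality of the spectral radii is trivial.

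If one worries about the $|E(\h)|=0$ branch of that proof, I would argue directly instead. Suppose $\T$ is not a star. Write $\T=\T_1(w)\ast\T_2(w)$ where each part has at least one edge. The $|E(\h)|\ge1$ case then lets me replace $\T_2$ by a star centered at $w$ with strict increase unless $\T_2$ is already such a star, and similarly for $\T_1$. Finally, Lemma~\ref{degreeonelemma} (equivalently Corollary~\ref{KelmTransHGCor}) shifts the remaining non-central edges onto $w$, again with strict increase. Combining the two steps gives the corollary.
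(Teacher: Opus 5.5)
Your proposal is correct and is essentially the paper's argument: for a $k$-tree the walk-tree $\T(\T,\prec,u)$ is isomorphic to $\T$, so Theorem~\ref{largestrootdeflemma} gives $\rho(\T)=\lambda(\T)$ (and likewise for $\mathcal{S}^{(k)}_m$), and Lemma~\ref{pendanttreelemma} applied with an edgeless host $\h$ then gives both the inequality and the equality case. Two small points: with trivial $\h$ the lemma's equality condition is just $\T\simeq\mathcal{S}^{(k)}_m$ for any choice of $v$, not ``with $v$ as center''; and in your fallback argument the final shifting step is unnecessary, since replacing both $\T_1$ and $\T_2$ by stars centered at $w$ already produces $\mathcal{S}^{(k)}_m$.
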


The following lemma gives some structural
information on the maximizing $k$-graphs in $\mathfrak{Ca}_k(m,t)$ and $\mathfrak{LCa}_k(m,t)$ for $m\geq 4$.

\begin{lemma}\label{structurallma}
Let $k\geq 3,$ $m\geq 4,$ $t\geq 0$ be integers, and let $\h$ be a maximizing $k$-graph in $\mathfrak{Ca}_k(m,t)$ {\rm(}resp. $\mathfrak{LCa}_k( m,t)${\rm)}.
Then the following assertions hold.
\begin{enumerate} [label={\rm (\alph*)}]% (\roman*)[label=\arabic*)]
		
\item Every edge of $\h$ lies in either a cycle or a pendant tree.

\item For every cycle $\mathcal{C}$ of $\h$, if
$v\in V(\mathcal{C})$ and $d_\mathcal{C}(v)=1$, then  $d_\h(v)=1$. In particular, if two cycles $\mathcal{C}_1$ and $\mathcal{C}_2$ of $\h$ have a common vertex $u$,
     then $d_{\mathcal{C}_1}(u)=d_{\mathcal{C}_2}(u)=2$.

\item Every cycle of $\h$ has length $2$ {\rm(}resp. 3{\rm)}.

\end{enumerate}
\end{lemma}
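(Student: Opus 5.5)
Each of the three assertions will be proved by contradiction: assuming it fails, we exhibit another $k$-graph in the same family ($\mathfrak{Ca}_k(m,t)$, resp. $\mathfrak{LCa}_k(m,t)$) with strictly larger largest matching root. The available tools are Lemma~\ref{treejointgraphslemma}, Lemma~\ref{degreeonelemma}, Lemma~\ref{pendanttreelemma}, and Corollary~\ref{KelmTransHGCor}. In each case the main bookkeeping is to check that the modified $k$-graph is again a (linear) $k$-cactus with $m$ edges and exactly $t$ cycles.

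For (a), suppose an edge $e$ lies in no cycle and in no pendant tree. Since $e$ is in no cycle, deleting $e$ leaves its $k$ vertices in pairwise distinct components. We then view $e$ as a one-edge $k$-tree $\T$ and write $\h=\T(u_1,\ldots,u_s)\ast(\h_1(v_1),\ldots,\h_s(v_s))$, where $u_1,\ldots,u_s\in e$ are the vertices of $e$ whose attached component has at least one edge. Because $e$ is not in a pendant tree, at least two of these components contain a cycle, so $s\ge 2$. Lemma~\ref{treejointgraphslemma} then gives a vertex $u\in e$ such that gluing every $\h_i$ at $u$ strictly increases $\lambda$. Gluing components that meet only through $e$ at a single vertex creates no new cycle, destroys none, and keeps linearity. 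This contradicts maximality.

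For (b), let $v\in V(\mathcal{C})$ with $d_{\mathcal{C}}(v)=1$ but $d_\h(v)\ge 2$. Let $e$ be the edge of $\mathcal{C}$ containing $v$, and pick a vertex $u\in e$ with $d_{\mathcal{C}}(u)=2$. By Lemma~\ref{lemmacactus}(b) the part hanging at $v$ off the cycle is a separate block, so $\h=\g(v)\ast\h'(v)$ with $\g\supseteq\mathcal{C}$ and $d_\g(v)=1$, while $u$ and $v$ are adjacent in $\g$ with $d_\g(u)>1$. Lemma~\ref{degreeonelemma} shows that moving $\h'$ to $u$ strictly increases $\lambda$. This move preserves the cycle count; it also preserves the cactus property, since all cycles through $u$ still pairwise meet in at most one vertex, and it preserves linearity because $\h'$ meets $\g$ only at one vertex. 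The ``in particular'' part follows directly: if $\mathcal{C}_1$ and $\mathcal{C}_2$ share $u$ and $d_{\mathcal{C}_1}(u)=1$, then $d_\h(u)\ge 2$, contradicting what we just proved.

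For (c), suppose a cycle $\mathcal{C}$ has length $\ell\ge 3$ (resp. $\ge 4$ in the linear case). By Lemma~\ref{lemmacactus}(a) it is linear, and by (a), (b) and Lemma~\ref{lemmacactus}(b) all extra structure hangs at the $\ell$ cycle vertices $w_1,\ldots,w_\ell$ of degree $2$ in $\mathcal{C}$. The plan is to apply a shift $\mathbb{S}_{uv}$ with $u=w_1$ and $v$ a suitable vertex of $\mathcal{C}$, chosen so that the resulting cycle has length $2$ (resp. $3$) and one edge of $\mathcal{C}$ becomes a pendant edge at $w_1$. Edge and cycle counts are unchanged, and the cactus property holds, and linearity too (in the linear case $\ell\ge4$ gives room to take $v$ at distance $\ge2$ from $u$, shortening to a triangle while keeping intersections of size $\le1$). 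Strictness comes from Corollary~\ref{KelmTransHGCor}: we need edges $e\ni u$ and $f\ni v$ with $e\cap f=\emptyset$ and $\mathbb{S}_{uv}(f)\notin E_\h(u)$. Such edges can be chosen on the cycle because $\ell\ge3$ and the cycle is linear. The main obstacle is this step: picking $v$ so that the shift hits exactly the intended edges, keeps exactly $t$ cycles, and satisfies the non-isomorphism hypothesis. If a single shift does not do all of this, I would first use (b) and Lemma~\ref{treejointgraphslemma} to consolidate all attachments at one cycle vertex, and only then shift, handling the small-$m$ boundary through the assumption $m\ge4$.
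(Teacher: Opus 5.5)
Your arguments for (a) and (b) are the paper's; taking $\T=e$ in (a) is fine. The gap is in (c). Write $\mathcal{C}=(v_0,e_1,v_1,\ldots,e_\ell,v_0)$.

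\textbf{The shift you describe does not exist.} You want one shift that turns $\mathcal{C}$ directly into a cycle of length $2$ (resp. $3$) while keeping exactly $t$ cycles, and no shift does this for general $\ell$.
\begin{itemize}
\item If $u=v_0$ and $v=v_j$ with $2\le j\le \ell-2$ (your ``distance $\ge 2$'' choice), then both $e_j$ and $e_{j+1}$ are moved onto $v_0$. The cycle $\mathcal{C}$ splits into two cycles through $v_0$, of lengths $j$ and $\ell-j$. The shifted graph therefore has $t+1$ cycles and leaves the family. For $\ell=4$, $j=2$ it even produces two $2$-cycles, so linearity is also lost.
\item If $v$ is adjacent to $u$, the shift shortens $\mathcal{C}$ by exactly one.
\end{itemize}
So the claim ``edge and cycle counts are unchanged, and the cactus property holds, and linearity too'' is false for the choice you suggest.

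\textbf{The missing observation.} You never need to reach the final shape; one strictly improving move already contradicts maximality. The paper applies $\mathbb{S}_{v_0v_1}$. This replaces $e_2$ by $(e_2\setminus\{v_1\})\cup\{v_0\}$, makes $e_1$ pendant at $v_0$, and moves everything hanging at $v_1$ to $v_0$. The result is a cycle of length $\ell-1$, still in $\mathfrak{Ca}_k(m,t)$, and still linear when $\ell\ge 4$. It is not isomorphic to $\h$, since the multiset of cycle lengths changes. For $\ell\ge 4$, the edges $e_\ell\ni v_0$ and $e_2\ni v_1$ are disjoint, so Corollary~\ref{KelmTransHGCor} gives the contradiction.

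\textbf{The strictness claim fails for $\ell=3$.} You say the disjoint pair $e\ni u$, $f\ni v$ ``can be chosen on the cycle because $\ell\ge 3$''. For $\ell=3$ this is false: the only cycle edge moved by $\mathbb{S}_{v_0v_1}$ is $e_2$, and both cycle edges at $v_0$ meet it ($e_1$ at $v_1$, $e_3$ at $v_2$). The failure is genuine, not an artifact: for $m=3$ this shift sends $\mathcal{L}^{(k)}_{3,1}$ to $\h^{(k)}_{3,1}$, and these have the same $\lambda$.

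This is exactly where $m\ge 4$ is used. By connectivity, (b), and the cactus property, there is an edge $g\notin E(\mathcal{C})$ meeting $\{v_0,v_1,v_2\}$ in exactly one vertex. Label the cycle so that this vertex is $v_0$. Then $g\cap e_2=\emptyset$ and $\mathbb{S}_{v_0v_1}(e_2)\notin E_\h(v_0)$, so the corollary applies.

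Your fallback of first consolidating attachments with Lemma~\ref{treejointgraphslemma} does not work as stated, because that lemma requires the host to be a $k$-tree, not a cycle. It is also unnecessary once you choose the labeling this way.
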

\begin{proof}
For (a) and (b), we prove the case of $\h\in \mathfrak{Ca}_k(m,t)$,
and the case of $\h\in \mathfrak{LCa}_k(m,t)$ follows similarly.
We begin with the proof of (a).
If there exists an edge $e$ that does not satisfy the assertion, then $\h$ can be written as $\T(u_1, \ldots,u_s)\ast(\h_1(v_1), \ldots,\h_s(v_s))$, where $\T$ is a $k$-tree containing the edge $e$, $s\geq 2$, and every $\h_i$ contains a cycle for $i\in[s]$.
By Lemma~\ref{treejointgraphslemma},
there exists $u\in\{u_1, \ldots, u_s\}$ such that
$$\lambda(\T(u_1, \ldots,u_s)\ast(\h_1(v_1), \ldots,\h_s(v_s)))
<
\lambda(\T(u, \ldots,u)\ast(\h_1(v_1), \ldots,\h_s(v_s))).$$
Note that $\T(u, \ldots,u)\ast(\h_1(v_1), \ldots,\h_s(v_s))$ also lies in $\mathfrak{Ca}_k(m,t)$, which contradicts the maximality of $\h$ in $\mathfrak{Ca}_k(m,t)$, as desired.

For (b), if $d_\mathcal{C}(v)=1$ but $d_\h(v)>1$, by Lemma~\ref{lemmacactus}(b), then $\h$ can be written as $\h_1(v)\ast \h_2(v)$,
where $\h_1$ contains $\mathcal{C}$ with $d_{\h_1}(v)=1$ and $\h_2$ contains at least one edge.
Let $v'\in V(\mathcal{C})$ be a vertex such that $d_\mathcal{C}(v')=2$ and $v$ and $v'$ are adjacent in $\h_1$.
Then Lemma~\ref{degreeonelemma} implies that
$\lambda(\h_1(v)\ast \h_2(v))< \lambda(\h_1(v')\ast \h_2(v))$.
Since $\h_1(v')\ast \h_2(v)$ lies in $\mathfrak{Ca}_k(m,t)$,
this contradicts the maximality of $\h$ in $\mathfrak{Ca}_k(m,t)$ and thus yields (b).

It remains to prove (c).
Let $\h$ be a maximizing $k$-graph in $\mathfrak{Ca}_k(m,t)$, and
let $\mathcal{C}=(v_0,e_1,v_1, \ldots, e_\ell, v_\ell)$ be a cycle of $\h$ with length $\ell$.
For $\ell \geq 3$, observe that $\mathbb{S}_{v_0v_1}(\h)\in \mathfrak{Ca}_k(m,t)$ and it is not isomorphic to $\h$ by Lemma~\ref{lemmacactus}(b).
If $\ell \geq 4$, then $e_2\cap e_\ell=\emptyset$ and $\mathbb{S}_{v_0v_1}(e_2)\notin E_\h(v_0)$, so Corollary~\ref{KelmTransHGCor} implies that
$\lambda(\h)< \lambda(\mathbb{S}_{v_0v_1}(\h))$, which  contradicts the maximality of $\h$ in $\mathfrak{Ca}_k(m,t)$.
Hence, every cycle of $\h$ has length at most $3$.
Note that for the case of $\h\in \mathfrak{LCa}_k(m,t)$,
this discussion still remains valid, so we get that every cycle of $\h$ has length $3$ in this case.
If $\ell =3$, note that $m\geq 4$, then Lemma~\ref{structurallma}(b) and Lemma~\ref{lemmacactus}(b) suggests that there exists an edge $f\in E(\h)\setminus \{e_1,e_2,e_3\}$ such that $|f\cap \{v_0,v_1,v_2\}|=1$.
Without loss of generality, we assume that $f\cap \{v_0,v_1,v_2\}=\{v_0\}$.
Then $f\cap e_2=\emptyset$ and $\mathbb{S}_{v_0v_1}(e_2)\notin E_\h(v_0)$, so Corollary~\ref{KelmTransHGCor} implies that
$\lambda(\h)< \lambda(\mathbb{S}_{v_0v_1}(\h))$, which  contradicts the maximality of $\h$ in $\mathfrak{Ca}_k(m,t)$.
Thus, every cycle of $\h$ has length at most $2$, as desired.
\end{proof}

Let $k\geq 3, t\geq 0$ and $m\geq 2t$ {\rm(}resp. $m\geq 3t${\rm)} be integers.
We write $\h^{(k)}_{m,t}$ {\rm(}resp. $\mathcal{L}^{(k)}_{m,t}${\rm)} for the unique $k$-graph in $\mathfrak{Ca}_k(m,t)$ {\rm(}resp. $\mathfrak{LCa}_k(m,t)${\rm)} satisfying the following conditions:
\begin{enumerate}  %[label={\rm (\alph*)}]% (\roman*)[label=\arabic*)]

\item $\h^{(k)}_{m,t}$ has a vertex $u$ and exactly $m-2t$ {\rm(}resp. $m-3t${\rm)} pendant edges containing $u$.

\item For every cycle $\mathcal{C}$ of $\h^{(k)}_{m,t}$, $\mathcal{C}$ has length $2$ {\rm(}resp. $3${\rm)} and
    $d_\mathcal{C}(u)=2$.

\end{enumerate}
The structure of $\h^{(k)}_{m,t}$ {\rm(}resp. $\mathcal{L}^{(k)}_{m,t}${\rm)} is illustrated in Figure~\ref{figure3}.
We next show that  $\h^{(k)}_{m,t}$ is the
unique maximizing $k$-graph in $\mathfrak{Ca}_k(m,t)$ for $k\geq 4$.

\begin{theorem}\label{MainresutCactus111}
Let $k\geq 3,$ $m\geq 4$ and $t\geq 0$ be integers.
The $k$-graph $\h^{(k)}_{m,t}$ %illustrated in Figure~\ref{figure3}
is the unique maximizing $k$-graph in $\mathfrak{Ca}_k(m,t)$.
\end{theorem}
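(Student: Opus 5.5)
Let $\h$ be a maximizing $k$-graph in $\mathfrak{Ca}_k(m,t)$, which exists because the family is finite up to isomorphism. The plan is to show $\h\simeq\h^{(k)}_{m,t}$ in four steps: pin down the cycles, show all cycles meet at one vertex, gather the pendant trees at that vertex, and turn them into pendant edges. This gives both maximality and uniqueness at once.

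First, by Lemma~\ref{structurallma}(c) every cycle of $\h$ has length $2$. A length-$2$ cycle is a pair of edges $e_1,e_2$ with $|e_1\cap e_2|=2$, since Observation~\ref{observationcactus}(a) forbids a larger intersection. By Lemma~\ref{structurallma}(a), every edge lies in a cycle or in a pendant tree. Let $\mathcal{K}$ be the subgraph formed by the cycle edges. I will show that $\mathcal{K}$ is connected, which forces the union of the cycles to have a cactus-tree structure: blocks $\{e_1,e_2\}$ glued at vertices. By Lemma~\ref{structurallma}(b), every vertex of a $2$-cycle $\mathcal{C}$ lying outside $e_1\cap e_2$ has degree $1$ in $\h$. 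Hence any two cycles sharing a vertex share it inside both of their $2$-vertex cores. Moreover, pendant trees can only attach at core vertices.

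The key step is to show that all $t$ cycles pass through a single vertex $u$. Suppose not. Then there are cycles $\mathcal{C}=\{e_1,e_2\}$ with core $\{a,b\}$, where $a$ lies in another cycle or tree and so does $b$; alternatively, the cores form a longer chain. I would apply the shift $\mathbb{S}_{ab}$. Edges through $b$ that avoid $a$ are moved to $a$. No new coincidences arise, because those edges share only $b$ with $e_1$ and $e_2$. So $\mathbb{S}_{ab}(\h)$ is still a $k$-cactus with $m$ edges and $t$ cycles, and it is non-isomorphic to $\h$, since the degree sequence changes. To get strict inequality I use Corollary~\ref{KelmTransHGCor}. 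Choose an edge $e\in E_\h(a)$ hanging off $a$ outside $\mathcal{C}$, and an edge $f\in E_\h(b)$ outside $\mathcal{C}$, so that $e\cap f=\emptyset$; this holds because otherwise $e,f,e_1$ would form a forbidden configuration, or the cactus condition would fail. Also $\mathbb{S}_{ab}(f)\notin E_\h(a)$. The corollary then gives $\lambda(\h)<\lambda(\mathbb{S}_{ab}(\h))$, contradicting maximality. In the case where $a$ has extra edges only inside $\mathcal{C}$, the symmetric shift $\mathbb{S}_{ba}$ is used instead. Iterating this, every cycle core contains a common vertex $u$.

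Next, the remaining edges form pendant trees. I will show they all hang at $u$. A pendant tree rooted at a vertex $w\neq u$ is either at the other core vertex of some cycle, handled by the same shift argument with $e$ taken from another cycle at $u$, or deeper, handled by Lemma~\ref{degreeonelemma}. Then Lemma~\ref{pendanttreelemma}, applied with root $u$, replaces the union of the pendant trees at $u$ (a single $k$-tree with $m-2t$ edges) by a star centered at $u$. Its equality case shows that $\h$ must already be that star attachment. This yields $\h\simeq\h^{(k)}_{m,t}$.

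The main obstacle is the shift step: checking carefully that $\mathbb{S}_{ab}(\h)$ stays in $\mathfrak{Ca}_k(m,t)$ with exactly $t$ cycles, and finding the disjoint pair $e,f$ needed for strictness. The degenerate case where $t$ is small and $u$ carries no extra edges needs separate handling, for instance via Lemma~\ref{treejointgraphslemma}. The hypothesis $m\geq 4$ is used only through Lemma~\ref{structurallma}.
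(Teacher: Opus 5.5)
Your proposal is correct and follows essentially the paper's route. Lemma~\ref{structurallma} gives the length-$2$ cycles and the degree-one non-core vertices; the shift $\mathbb{S}_{ab}$ along a cycle core $\{a,b\}$ with extra edges at both ends, via Corollary~\ref{KelmTransHGCor}, forces all cycles and then all pendant trees onto one vertex $u$; and Lemma~\ref{pendanttreelemma} makes what hangs at $u$ a star. The only differences are small: the paper applies Lemma~\ref{pendanttreelemma} to each maximal pendant tree before moving it, whereas you apply it once to the union at the end, and the loose ends you flag are easier than you suggest, since $t=0$ is Lemma~\ref{pendanttreelemma} directly and for $t=1$ your own key shift already shows at most one core vertex carries pendant edges, so neither Lemma~\ref{treejointgraphslemma} nor $\mathbb{S}_{ba}$ is needed.
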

\begin{proof}
Let $\h$ be a maximizing $k$-graph in $\mathfrak{Ca}_k(m,t)$.
For the case of $t=0$, the statement follows from Lemma~\ref{pendanttreelemma}.
Assume that $t\geq 1$.
To prove that $\h$ is isomorphic to $\h^{(k)}_{m,t}$,
it suffices to prove that $\h$ satisfies the following properties.

\begin{enumerate}[label={\rm (\alph*)}]%

\item[(A1)] Every edge of $\h$ lies in either a cycle or a pendant tree.

\item[(A2)] Every cycle of $\h$ has length $2$.

\item[(A3)] There exists a vertex $u\in V(\h)$ such that $d_\mathcal{C}(u)=2$ for every cycle $\mathcal{C}$ of $\h$.

\item[(A4)] Every pendant edge of $\h$ contains the vertex $u$.

\end{enumerate}

Clearly, (A1) and (A2) follow from Lemma~\ref{structurallma}(a) and (c), respectively.
For (A3), it is trivial for the case of $t= 1$, and we further
assume $t\geq 2$.
By (A1), there exist two cycles $\mathcal{C}_1$ and $\mathcal{C}_2$ such that they have a common vertex, say $u$.
We next prove that $u$ is the required vertex.
 Lemma~\ref{structurallma}(b) implies that for every cycle $\mathcal{C}$ of $\h$, either
$d_\mathcal{C}(u)=2$ or $\mathcal{C}$ does not contain $u$,
and so $d_{\mathcal{C}_1}(u)=d_{\mathcal{C}_2}(u)=2$.
If there exists a cycle of $\h$ containing no $u$,
then (A1) suggests that there are two cycles $\mathcal{C}_3$ and $\mathcal{C}_4$ such that $d_{\mathcal{C}_3}(u)=2$, $V(\mathcal{C}_4)$ does not contain $u$,
and $V(\mathcal{C}_3)$ and $V(\mathcal{C}_4)$ have a common vertex, say $z$.
By (A2), the length of $\mathcal{C}_3$ is $2$, so $u$ and  $z$ are adjacent in $\h$.
Combining this and Lemma~\ref{lemmacactus}(b), one may check that $\mathbb{S}_{uz}(\h) \in \mathfrak{Ca}_k(m,t)$ and it is not isomorphic to $\h$.
Moreover, we pick edges $e\in E_{\h}(u)\setminus E(\mathcal{C}_3)$ and $f \in E(\mathcal{C}_4)$, then
$e \cap f=\emptyset$ and $\mathbb{S}_{uz}(f)\notin E_\h(u)$, so Corollary~\ref{KelmTransHGCor} implies that
$\lambda(\h)< \lambda(\mathbb{S}_{uz}(\h))$, which  contradicts the maximality of $\h$ in $\mathfrak{Ca}_k(m,t)$.
Hence, $d_\mathcal{C}(u)=2$ for every cycle $\mathcal{C}$ of $\h$, as desired.

It remains to prove (A4).
By Lemma~\ref{structurallma}(a), if $e$ is a pendant edge of $\h$, then it is contained in a
maximal pendant $k$-tree of $\h$, say $\T$.
Lemma~\ref{pendanttreelemma} implies that $\T$ is a $k$-star
and the center $w$ of $\T$ is contained in a cycle $\mathcal{C}$ of $\h$ by Lemma~\ref{structurallma}(a).
Moreover, Lemma~\ref{structurallma}(b) implies that $d_\mathcal{C}(w)=2$.
If $w\neq u$, then $w$ and $u$ are adjacent in $\mathcal{C}$ as (A1) says that the length of $\mathcal{C}$ equals $2$.
By Lemma~\ref{lemmacactus}(b), one may check that $\mathbb{S}_{uw}(\h) \in \mathfrak{Ca}_k(m,t)$.
If $t=1$, then either $\mathbb{S}_{uw}(\h) \simeq \h$ or $\mathbb{S}_{uw}(\h)$ is not isomorphic to $\h$.
In the latter case, there exists an edge $f\in E_\h(u)\setminus E(\mathcal{C})$.
Then $e \cap f=\emptyset$ and $\mathbb{S}_{uw}(e)\notin E_\h(u)$, so Corollary~\ref{KelmTransHGCor} implies that
$\lambda(\h)< \lambda(\mathbb{S}_{uw}(\h))$, which contradicts the maximality of $\h$ in $\mathfrak{Ca}_k(m,t)$.
If $t\geq 2$, by (A2) and (A3),
then there exists a cycle $\widehat{\mathcal{C}}\neq \mathcal{C}$ of length $2$ containing $u$.
In this case,
$e \cap g=\emptyset$ for every $g\in E(\widehat{\mathcal{C}})$ and $\mathbb{S}_{uw}(e)\notin E_\h(u)$, so Corollary~\ref{KelmTransHGCor} implies that
$\lambda(\h)< \lambda(\mathbb{S}_{uw}(\h))$, which contradicts the maximality of $\h$ in $\mathfrak{Ca}_k(m,t)$.
Hence, $w=u$, which implies that every pendant edge of $\h$ contains $u$.
The proof is completed.
\end{proof}

%%%%%%%%%%%%%%%%%%%%%%%%%%%%%%%%%%%%%%%%%%%%%%%%%%%%%%%%%%%%%%%%
%%%%%%%%%%%%%%%%%%%%%%%%%%%%%%%%%%%%%%%%%%%%%%%%%%%%%%%%%%%%%%%%

Our next goal is to prove that $\mathcal{L}^{(k)}_{m,t}$ is the unique maximizing $k$-graph in $\mathfrak{LCa}_k(m,t)$.
We start with the simplest case of $k=2$,
which will be used in the proof for $k\geq 3$.

\begin{theorem}\label{MainresutCactus222}
Let $m\geq 4$ and $t\geq 0$ be integers.
The graph $\mathcal{L}^{(2)}_{m,t}$ % illustrated in Figure~\ref{figure3}
is the unique maximizing graph in $\mathfrak{LCa}_2(m,t)$.
\end{theorem}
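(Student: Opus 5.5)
For $k=2$, a linear $2$-cactus is an ordinary cactus graph without multiple edges; cycles have length at least $3$. I will mirror the proof of Theorem~\ref{MainresutCactus111}. The Kelmans tools (Theorem~\ref{KelmTransgraph}, equivalently Theorem~\ref{KelmTranshypergraph} with the remark that strictness is automatic for $k=2$) replace Corollary~\ref{KelmTransHGCor}. Let $G$ be maximizing in $\mathfrak{LCa}_2(m,t)$. The case $t=0$ is the tree case, handled by Lemma~\ref{pendanttreelemma} (the star is maximal). Assume $t\ge 1$ and aim to verify four properties:
(B1) every edge lies in a cycle or a pendant tree;
(B2) every cycle is a triangle;
(B3) some vertex $u$ lies on every cycle;
(B4) every pendant edge contains $u$, and pendant trees are stars centered at $u$.

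Step (B1) and the degree statement of Lemma~\ref{structurallma}(b) go through verbatim for $k=2$. They rely only on Lemmas~\ref{treejointgraphslemma}, \ref{degreeonelemma} and \ref{lemmacactus}(b), none of which used $k\ge3$ in an essential way. Lemma~\ref{degreeonelemma} uses Corollary~\ref{KelmTransHGCor}, which for $k=2$ follows from Theorem~\ref{KelmTransgraph} as soon as the shift is non-isomorphic. In a cactus graph, Lemma~\ref{structurallma}(b) is vacuous, because every cycle vertex has cycle degree $2$. I will instead use Lemma~\ref{lemmacactus}(b) directly: the graph decomposes as a cycle with pendant blocks hung at its vertices.

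For (B2), take a cycle $C=(v_0,v_1,\dots,v_{\ell-1},v_0)$ with $\ell\ge4$. Apply the shift $\mathbb{S}_{v_0v_1}$; for $k=2$ this is the Kelmans transformation. It moves the edge $v_1v_2$ to $v_0v_2$, turning $C$ into a cycle of length $\ell-1$ plus a pendant edge $v_0v_1$. It also moves the blocks hanging at $v_1$ to $v_0$. One must check that the result stays in $\mathfrak{LCa}_2(m,t)$: it has the same number of edges, and the cycle count is preserved because $v_0v_2$ was not an edge when $\ell\ge4$. Since the result is not isomorphic to $G$, Theorem~\ref{KelmTransgraph} gives strict increase, a contradiction. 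Hence $\ell=3$.

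For (B3), argue as in (A3). If two triangles $C_3\ni u$ and $C_4\not\ni u$ share a vertex $z$ adjacent to $u$, then $\mathbb{S}_{uz}$ moves $C_4$ (and everything else hanging at $z$) to $u$. The new graph is still a linear cactus with $t$ cycles. Connectivity is preserved, and no new cycle is created, since the only common neighbour of $u$ and $z$ on $C_3$ is its third vertex. If $u$ is not already on all cycles, the result is non-isomorphic, and Theorem~\ref{KelmTransgraph} gives a contradiction. Combined with (B1), this shows all cycles pass through a single vertex $u$. Then (B4) follows as before: a pendant star centered at $w\ne u$ with $w$ on a triangle through $u$ is moved by $\mathbb{S}_{uw}$, strictly increasing $\lambda$.

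The main obstacle I expect is the bookkeeping that each shift stays inside $\mathfrak{LCa}_2(m,t)$ and is genuinely non-isomorphic. The most delicate point is in (B2): a Kelmans move on a cycle of length $4$ must not merge it into a multi-edge or destroy a cycle. If $N(v_1)\cap N(v_0)\neq\emptyset$ off the cycle, the cactus property has to rule this out. The remaining cases $m\le3$ are not covered by this statement and need no treatment here.
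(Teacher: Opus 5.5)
Your outline is the argument the paper intends: it says only that the proof mirrors Theorem~\ref{MainresutCactus111}. You use the equality case of Theorem~\ref{KelmTransgraph} in place of Corollary~\ref{KelmTransHGCor}, which is harmless for $k=2$, and (B1)--(B3) go through. The catch is that non-isomorphism then carries the whole of every strictness claim, and in (B4) for $t=1$ the claim is false as you wrote it. When $t=1$, (B3) does not single out $u$; any vertex of the triangle qualifies. Take $G=\mathcal{L}^{(2)}_{m,1}$ itself, with triangle $uwb$, where $u$ has degree $2$ and $w$ carries all $m-3\geq 1$ pendant edges. Then $\mathbb{S}_{uw}(G)\simeq G$, so Theorem~\ref{KelmTransgraph} gives only equality, and ``every pendant edge contains $u$'' is simply false for this $u$. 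The same happens when pendant edges sit at $w$ and $b$ but not at $u$. Because a triangle has a third vertex that may carry pendant edges, the paper's dichotomy in (A4), ``$\mathbb{S}_{uw}(\mathcal{H})\simeq\mathcal{H}$ or not'', written for cycles of length $2$, does not transfer verbatim either, so ``as before'' does not cover this case.

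The repair for $t=1$ is to take two triangle vertices $w\neq w'$ that both carry pendant edges and apply $\mathbb{S}_{w'w}$. The triangle edge from $w$ to the third vertex is not moved, since its image already exists, but every pendant edge at $w$ is. The number of triangle vertices of degree greater than $2$ therefore drops by one, so the shift is non-isomorphic and Theorem~\ref{KelmTransgraph} gives the contradiction. For $t\geq 2$ your (B4) step is fine: $u$ is then the unique vertex on at least two cycles, so the increase of $d(u)$ certifies non-isomorphism.

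The non-isomorphism claims you only assert in (B2) and (B3) need explicit invariants as well. In (B2), the multiset of cycle lengths changes: $C$ drops to length $\ell-1$ and every other cycle keeps its length. In (B3), the number of vertices lying on at least two cycles drops by one, since $z$ loses $C_4$ while $u$ already lay on $C_1$ and $C_2$.

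Finally, state why (B1)--(B4) force $G\simeq\mathcal{L}^{(2)}_{m,t}$. A vertex $a\neq u$ of a triangle lies on no other cycle, because such a cycle would also contain $u$ and share $\{u,a\}$ with the triangle. It carries no pendant edge either, so $d(a)=2$.
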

\begin{proof}
The proof is similar to that of Theorem~\ref{MainresutCactus111}, so we leave it to the reader.
\end{proof}

\begin{remark}
For $k\geq 3$, the proof strategy of Theorem~\ref{MainresutCactus111} cannot be directly used to determine the maximizing graph in $\mathfrak{LCa}_k(m,t)$.
The main reason is that for a $k$-graph $\mathcal{H}$ which is a linear cycle $(v_0,e_1,v_1,e_2,v_2,e_3,v_0)$ of length $3$, the $k$-graph $\mathbb{S}_{v_1v_0}(\mathcal{H})$ is not linear.
However, for $k=2$, the $2$-graph $\mathbb{S}_{v_1v_0}(\mathcal{H})$ is still a cycle of length $3$.
This is why the proof strategy of Theorem~\ref{MainresutCactus222} coincides with that of Theorem~\ref{MainresutCactus111}.
\end{remark}

%%%%%%%%%%%%%%%%%%%%%%%%%%%%%%%%%%%%%%%%%%%%%%%%%%%%%%%%%%%%%%%%
%%%%%%%%%%%%%%%%%%%%%%%%%%%%%%%%%%%%%%%%%%%%%%%%%%%%%%%%%%%%%%%%

To determine the maximizing graph in $\mathfrak{LCa}_k(m,t)$ for $k\geq 3$, we need the concept of $k$-th power of a graph.
Let $G=(V,E)$ be a graph.
For an integer $k\geq3$,
the {\it $k$-th power of $G$}, denoted by $G^{(k)}$,
is the $k$-graph with the vertex set $V\cup{\{v_{e,1},\ldots,v_{e,k-2}:e\in E}\}$ and edge set
${\{e\cup{{\{v_{e,1},\ldots,v_{e,k-2}}}\}:e\in E}\}$.
The following result establishes an identity between
$\mu(G,x)$ and $\mu(G^{(k)},x)$.

\begin{lemma}\label{largestmrootpowergraph}
Let $G$ be a graph.
Then
\[
\mu(G^{(k)},x)=x^{(1-\frac{k}{2})|V(G)|+(k-2)|E(G)|}
\mu(G,x^{\frac{k}{2}}).
\]
In particular, we have $\lambda(G^{(k)})=\lambda(G)^{\frac{2}{k}}$.
\end{lemma}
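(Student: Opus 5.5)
The identity follows by comparing matchings of $G$ and $G^{(k)}$ directly and substituting into the definition \eqref{definitionmp}. The map $e\mapsto e\cup\{v_{e,1},\ldots,v_{e,k-2}\}$ is a bijection $E(G)\to E(G^{(k)})$. Two edges of $G^{(k)}$ can only meet in original vertices of $V$, since the added vertices $v_{e,j}$ lie in exactly one edge. Hence a set of edges of $G$ is a matching if and only if its image is a matching of $G^{(k)}$, and
\[
p(G^{(k)},r)=p(G,r)\qquad\text{for every } r\ge 0 .
\]

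Put $n=|V(G)|$ and $m=|E(G)|$, so that $|V(G^{(k)})|=n+(k-2)m$. By \eqref{definitionmp},
\[
\mu(G^{(k)},x)=\sum_{r\ge0}(-1)^r p(G,r)\,x^{\,n+(k-2)m-kr},
\]
while
\[
\mu(G,x^{k/2})=\sum_{r\ge0}(-1)^r p(G,r)\,x^{\frac{k}{2}(n-2r)}=\sum_{r\ge0}(-1)^r p(G,r)\,x^{\frac{k}{2}n-kr}.
\]
Multiplying the second sum by $x^{(1-\frac k2)n+(k-2)m}$ gives exponents $n+(k-2)m-kr$, which are exactly those of the first sum. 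This proves the identity for $x>0$, using the real branch of $x^{k/2}$. That is all we need for the largest root.

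For the statement about $\lambda$, write $y=x^{k/2}$. Both sides of the identity are strictly increasing functions of $x$ on $(0,\infty)$ after this substitution. The positive real roots of $\mu(G^{(k)},x)$ are therefore exactly the values $x=y^{2/k}$ with $y>0$ a root of $\mu(G,y)$.

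By the Heilmann--Lieb theorem, $\lambda(G)$ is the largest real root of $\mu(G,x)$. It is nonnegative, and positive when $G$ has an edge. Its image $\lambda(G)^{2/k}$ is then the largest positive real root of $\mu(G^{(k)},x)$.

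By \cite[Theorem 1.2]{WanWF}, as recalled in the introduction, $\lambda(G^{(k)})$ is the largest real root of $\mu(G^{(k)},x)$, and it is positive whenever there is an edge. So $\lambda(G^{(k)})=\lambda(G)^{2/k}$.

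If $G$ has no edges, both polynomials are monomials and both largest roots equal $0$, so the formula still holds.

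The only step needing care is this last transfer of the largest root. Complex roots of $\mu(G^{(k)},x)$ need not correspond to real ones, so we cannot argue with moduli alone. We avoid this by using that $\lambda$ is attained as the largest real root, which the cited hypergraph Heilmann--Lieb theorem provides, and by using monotonicity of $x\mapsto x^{k/2}$ on the positive axis.
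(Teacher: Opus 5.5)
Your proof is correct and is essentially the paper's: the bijection $e\mapsto e\cup\{v_{e,1},\ldots,v_{e,k-2}\}$ gives $p(G^{(k)},r)=p(G,r)$ and the identity is then the same exponent bookkeeping. For the ``in particular'', which the paper leaves implicit, your transfer of the largest positive root is valid, though the reason is that the prefactor is positive and $x\mapsto x^{k/2}$ is increasing on $(0,\infty)$, not that ``both sides'' are increasing. Contrary to your closing remark, moduli alone do suffice, so the hypergraph Heilmann--Lieb theorem is not needed: $\mu(G^{(k)},x)=x^{n+(k-2)m}Q(x^{-k})$ with $Q(z)=\sum_r(-1)^rp(G,r)z^r$, so every nonzero root satisfies $x^k=y^2$ for some root $y$ of $\mu(G,\cdot)$; this $y$ is real by Heilmann--Lieb, whence $|x|=|y|^{2/k}\le\lambda(G)^{2/k}$.
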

\begin{proof}
By the definition of $G^{(k)}$, we get that $p(G^{(k)},i)=p(G,i)$ for every $i$, and
$$|V(G^{(k)})|=|V(G)| + (k-2)|E(G)|.$$
The statement follows from the following calculation
\begin{align*}
\mu(G^{(k)},x)
&=\sum_{i\geq 0}(-1)^ip(G^{(k)},i)x^{|V(G^{(k)})|-ki}\\
&=\sum_{i\geq 0}(-1)^ip(G,i)x^{|V(G)|+(k-2)|E(G)|-ki}\\
&=x^{(1-\frac{k}{2})|V(G)|+(k-2)|E(G)|}
\sum_{i\geq 0}(-1)^ip(G,i)x^{\frac{k}{2}(|V(G)|-2i)}\\
&=x^{(1-\frac{k}{2})|V(G)|+(k-2)|E(G)|}\mu(G,x^{\frac{k}{2}}). \qedhere
\end{align*}
\end{proof}

With the help of Theorem~\ref{MainresutCactus222} and Lemma~\ref{largestmrootpowergraph}, the following result determines the unique maximizing $k$-graph in $\mathfrak{LCa}_k(m,t)$.

\begin{theorem}\label{MainresutCactus333}
Let $k\geq 2,$ $m\geq 4$ and $t\geq 0$ be integers.
The $k$-graph $\mathcal{L}^{(k)}_{m,t}$ % illustrated in Figure~\ref{figure3}
is the unique maximizing $k$-graph in $\mathfrak{LCa}_k(m,t)$.
\end{theorem}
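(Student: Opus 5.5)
For $k=2$ the statement is exactly Theorem~\ref{MainresutCactus222}, so assume $k\geq 3$. The plan is to reduce to the graph case through the $k$-th power construction and Lemma~\ref{largestmrootpowergraph}. The family $\mathfrak{LCa}_k(m,t)$ is finite up to isomorphism, so a maximizing $k$-graph $\h$ exists. I will show that $\h\simeq G^{(k)}$ for some linear cactus graph $G\in\mathfrak{LCa}_2(m,t)$. Since $\mathcal{L}^{(k)}_{m,t}=(\mathcal{L}^{(2)}_{m,t})^{(k)}$, Lemma~\ref{largestmrootpowergraph} and Theorem~\ref{MainresutCactus222} then give
$$\lambda(\h)=\lambda(G)^{2/k}\leq \lambda(\mathcal{L}^{(2)}_{m,t})^{2/k}=\lambda(\mathcal{L}^{(k)}_{m,t}).$$
Here equality holds only if $\lambda(G)=\lambda(\mathcal{L}^{(2)}_{m,t})$, which by the uniqueness part of Theorem~\ref{MainresutCactus222} forces $G\simeq \mathcal{L}^{(2)}_{m,t}$ and hence $\h\simeq\mathcal{L}^{(k)}_{m,t}$. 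This settles both maximality and uniqueness.

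The main step is the structural claim: in a maximizer $\h$, every edge contains at most two vertices of degree greater than one. I would derive it from Lemma~\ref{structurallma}, applied in its linear version with $m\geq 4$.
\begin{itemize}
\item By part (a), every edge lies in a cycle or in a pendant tree.
\item By part (c), every cycle has length $3$; by Lemma~\ref{lemmacactus}(a) such a cycle $(v_0,e_1,v_1,e_2,v_2,e_3,v_0)$ is linear.
\item Take a cycle edge, say $e_1$. Its vertices other than $v_0,v_1$ have degree one in the cycle, so part (b) shows they have degree one in $\h$.
\item For pendant trees, apply Lemma~\ref{pendanttreelemma} to a maximal pendant tree: by maximality of $\lambda(\h)$ it must be a star centred at its root $w$. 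Each of its edges therefore consists of $w$ together with $k-1$ vertices of degree one.
\end{itemize}
When $t=0$, $\h$ is itself a $k$-tree and is a star by Lemma~\ref{pendanttreelemma}, i.e.\ $\h\simeq (K_{1,m})^{(k)}$. This is consistent with the claim.

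Next I build $G$ by assigning to each edge of $\h$ two designated vertices. For an edge of a $3$-cycle I take its two cycle vertices. For a pendant edge I take its centre $w$ and one arbitrary degree-one vertex. The remaining $k-2$ vertices of every edge have degree one, so $\h=G^{(k)}$, where $G$ is the graph on the designated vertices. Then $G$ has $m$ edges and is connected, and it is automatically linear because it is a simple graph.

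The point I expect to need the most care is that the cycles of $\h$ correspond bijectively to the cycles of $G$, so that $G$ is a cactus with exactly $t$ cycles. A cycle of $\h$ passes only through vertices of degree at least two, since consecutive edges of the cycle share its transition vertices. All such vertices are designated, so every cycle of $\h$ traces a cycle in $G$ using the same edges, and conversely every cycle of $G$ lifts to a cycle of $\h$. Vertex intersections of cycles are preserved under this correspondence, so the cactus condition transfers and $G\in\mathfrak{LCa}_2(m,t)$. This completes the reduction to the graph case.
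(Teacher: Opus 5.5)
Your proposal is correct and follows essentially the same route as the paper. Both arguments use Lemma~\ref{structurallma} and Lemma~\ref{pendanttreelemma} to show that every edge of a maximizer has at least $k-2$ degree-one vertices, so $\h\simeq G^{(k)}$, and then transfer Theorem~\ref{MainresutCactus222} through Lemma~\ref{largestmrootpowergraph}; your explicit check that $G\in\mathfrak{LCa}_2(m,t)$ via the cycle correspondence, and that uniqueness carries over, fills in details the paper leaves implicit.
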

\begin{proof}
If $t=0$,
then the statement follows from Lemma~\ref{pendanttreelemma}.
Assume that $t\geq 1$.
The case of $k=2$ follows from Theorem~\ref{MainresutCactus222}.
Now we suppose that $k\geq 3$ and let $\h$ be a maximizing $k$-graph in $\mathfrak{LCa}_k(m,t)$.
By Lemma~\ref{largestmrootpowergraph} and Theorem~\ref{MainresutCactus222},
it suffices to prove that $\h$ is a $k$-th power of a graph. Equivalently, every edge of $\h$ contains at least $k-2$ degree-one vertices.
We prove it by showing that $\h$ satisfies the following properties.

\begin{enumerate}[label={\rm (\alph*)}]%

\item[(B1)] Every edge of $\h$ lies in either a cycle or a pendant tree.

\item[(B2)] Every cycle of $\h$ has length $3$.

\item[(B3)] If two cycles $\mathcal{C}_1$ and $\mathcal{C}_2$ of $\h$ have a common vertex $u$,
     then $d_{\mathcal{C}_1}(u)=d_{\mathcal{C}_2}(u)=2$.

\item[(B4)] If $e$ is a pendant edge of $\h$, then there exists a cycle $\mathcal{C}$ of $\h$ and a vertex $w\in e$ such that $d_\mathcal{C}(w)=2$.

\end{enumerate}
Clearly, (B1), (B2), and (B3) follow from Lemma~\ref{structurallma}.
It remains to prove (B4).
By Lemma~\ref{structurallma}(a), if $e$ is a pendant edge of $\h$, then it is contained in a $k$-tree of $\h$, say $\T$.
Lemma~\ref{pendanttreelemma} implies that $\T$ is a $k$-star
and the center $w$ of $\T$ is contained in a cycle $\mathcal{C}$ of $\h$ by Lemma~\ref{structurallma}(a).
Moreover, Lemma~\ref{structurallma}(b) implies that $d_\mathcal{C}(w)=2$.
The proof is completed.
\end{proof}

We are now ready to prove the main result of this paper.

\begin{proof}[\proofname{ of \bf Theorem~\ref{MainresutCactus}.}]
For $m\geq 4$, the statement follows from Theorems~\ref{MainresutCactus111} and~\ref{MainresutCactus333},
respectively.
For $m=1$ or $m=2$, the statement is obvious.
If $m=3$, then $t=0$ or $t=1$.
For $(m,t)=(3,0)$, all elements of $\mathfrak{Ca}_k(3,0)$ and $\mathfrak{LCa}_k(3,0)$ are $k$-trees, so the statement follows from Lemma~\ref{pendanttreelemma}.
It remains to consider the case of $(m,t)=(3,1)$.
Clearly, $\mathfrak{LCa}_k(3,1)$ contains only one $k$-graph $\mathcal{L}^{(k)}_{3,1}$.
Also, $\mathfrak{Ca}_k(3,1)=\{\h^{(k)}_{3,1}, \mathcal{L}^{(k)}_{3,1}, \g\}$,
where $\g$ is the $k$-cactus that consists of a cycle of length $2$ with one pendant edge attached at a degree-one vertex of the cycle.
One may calculate that
$$\mu(\g,x)=x^{3k-3}-3x^{2k-3}+x^{k-3}$$
and
$$\mu(\h^{(k)}_{3,1},x)=\mu(\mathcal{L}^{(k)}_{3,1},x)
=x^{3k-3}-3x^{2k-3},$$
so
$$\left(\frac{3+\sqrt{5}}{2}\right)^{1/k}=\lambda(\g)<\lambda(\h^{(k)}_{3,1})
=\lambda(\mathcal{L}^{(k)}_{3,1})=3^{1/k}.$$
The proof is completed.
\end{proof}

\section{Concluding Remarks}\label{Concluding}

In this paper, we determine all $k$-graphs
whose largest matching root attains the maximum
among all $k$-cacti and linear $k$-cacti
with a given number of cycles and edges.
Our method combines the spectral method and the shifting operation of $k$-graphs.
We believe that there are some potential applications of Theorem~\ref{KelmTranshypergraph} and Lemma~\ref{treejointgraphslemma}.
For instance, as a quick application,
we next consider the maximizing $k$-graphs among all $n$-vertex $k$-graphs with a given number of pendant edges.

Let $\mathcal{K}_n^{(k)}$ be the complete $k$-graph of order $n$, that is, the $k$-graph with $n$ vertices and its
edge set consists of all $k$-element subsets of the vertex set.
For integers $n,k,p$, write
$$\mathcal{K}_{n,p}^{(k)}= \mathcal{K}_n^{(k)}(u)\ast \mathcal{S}_p^{(k)}(v),$$
where $u\in V(\mathcal{K}_n^{(k)})$ and $v$ is the center of the $k$-star $\mathcal{S}^{(k)}_p$.
Observe that the $k$-graph $\mathcal{K}_{n,p}^{(k)}$ is unique up to isomorphism.
The following result extends Theorem 3.3 in \cite{ZhangCY} from graphs to $k$-graphs.

\begin{theorem}\label{thmmaxpendentedges}
Let $k\geq 2$, $p\geq 1$, and  $n\geq p(k-1)+k+1$ be integers.
Then the $k$-graph $\mathcal{K}_{n-p(k-1),p}^{(k)}$ is the unique   $k$-graph whose largest matching root
attains the maximum among all $n$-vertex $k$-graphs with $p$ pendant edges.
\end{theorem}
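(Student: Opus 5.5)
Let $\h$ be a maximizing $k$-graph among all $n$-vertex $k$-graphs with exactly $p$ pendant edges. The plan is to show first that all pendant edges of $\h$ hang at one common vertex. Then we show that the non-pendant part of $\h$ is as large as possible, namely a complete $k$-graph on $n-p(k-1)$ vertices. Throughout we use two facts. Adding edges strictly increases $\lambda$ on connected $k$-graphs (Lemma~\ref{subgraphlagroot}, together with Lemma~\ref{basiclemma}(a) for disconnected ones). Shifting never decreases $\lambda$ (Theorem~\ref{KelmTranshypergraph}), with strict increase certified by Corollary~\ref{KelmTransHGCor}.

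\emph{Step 1 (reduction to a core with pendant edges).} Write $\h$ as a core $\G$ together with $p$ pendant edges. Each pendant edge $e_i$ has $k-1$ vertices of degree one and a root $w_i$; we take $\G$ to be the union of the non-pendant edges together with the roots. This step needs two verifications.
\begin{itemize}
\item $\h$ is connected. Otherwise we add an edge joining two components. This must be done without changing the number of pendant edges, for instance by adding an edge inside the core using vertices of the core, and this is possible because $n\ge p(k-1)+k+1$ leaves at least $k+1$ core vertices.
\item The roots may be taken in the core and no pendant edge is an isolated component. Both follow by the same edge-adding argument.
\end{itemize}

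\emph{Step 2 (saturating the core).} Let $U$ be the set of core vertices, so that $|U|=n-p(k-1)$. Every $k$-subset of $U$ can be added as an edge without creating a new pendant edge, because every vertex of $U$ already lies in some core edge or is a root. Adding such a subset also destroys no existing pendant edge, since pendant edges use only degree-one vertices outside $U$. The one exception would be a $k$-subset containing a root $w$ of degree one in $\G$, but such a root does not occur because $w$ lies in a core edge.

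By maximality and Lemma~\ref{subgraphlagroot}, the core must therefore already be $\mathcal{K}^{(k)}_{|U|}$. Here we must make sure the count stays at $p$: in $\mathcal{K}^{(k)}_{|U|}$ with $|U|\ge k+1$, every vertex has degree at least $\binom{|U|-1}{k-1}\ge 2$, so no core edge becomes pendant. This is exactly where the hypothesis $n\ge p(k-1)+k+1$ enters.

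\emph{Step 3 (collecting the pendant edges).} If two pendant edges have distinct roots $u\ne w$, we apply the shift $\mathbb{S}_{uw}$. Since $\mathcal{K}^{(k)}_{|U|}$ is symmetric in $u$ and $w$, the core is unchanged, and the shift moves every pendant edge at $w$ over to $u$. The number of pendant edges stays $p$, and the result is not isomorphic to $\h$ unless one of the two roots carries no pendant edges.

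To obtain strict increase we apply Corollary~\ref{KelmTransHGCor}. We take $f$ to be a pendant edge at $w$, and $e$ to be a core edge containing $u$ but not $w$. Such an $e$ exists because $|U|\ge k+1$, and it is disjoint from $f$ because $f\cap U=\{w\}$. Moreover $\mathbb{S}_{uw}(f)\notin E_\h(u)$, since the non-root vertices of $f$ have degree one. This contradicts maximality, so all pendant edges share one root, and hence $\h\simeq\mathcal{K}^{(k)}_{n-p(k-1),p}$.

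Uniqueness comes from the strictness at each stage. The main obstacle I expect is the bookkeeping in Step 1: making the edge-adding moves rigorous when a root has degree one in the core, or when the core is small. The hypothesis on $n$ is what supplies the room for these moves.
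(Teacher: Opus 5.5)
Your argument is essentially the paper's proof. The paper saturates the non-pendant part to a complete $k$-graph on $n-p(k-1)$ vertices via Lemma~\ref{subgraphlagroot}, then merges distinct roots with a shift and gets strictness from Corollary~\ref{KelmTransHGCor}. The one difference in Step 3 is your choice of $e$. The paper takes $e=e_s$ and $f=e_t$, two pendant edges at the two distinct roots, which are automatically disjoint. You take $e$ to be a core edge through $u$ avoiding $w$. Both choices satisfy the hypotheses of the corollary.

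\textbf{Step 1 needs repair.} It is unnecessary, and as sketched it does not go through. Your core (non-pendant edges plus roots) omits isolated vertices, which the family of $n$-vertex $k$-graphs allows, so $|U|=n-p(k-1)$ is not justified. The specific moves you propose also break down:
\begin{itemize}
\item An edge added ``inside the core'' cannot absorb an isolated vertex.
\item For $k=2$, any single edge through an isolated vertex is itself pendant, so adding it changes the count.
\item Adding one edge to a disconnected graph need not strictly increase $\lambda$. Lemma~\ref{subgraphlagroot} requires the larger graph to be connected, and the maximum may sit in an untouched component.
\end{itemize}

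\textbf{The fix is the paper's one-shot saturation.} Fix a set $W$ of $k-1$ degree-one vertices from each pendant edge, so $|W|=p(k-1)$. Put $U=V(\h)\setminus W$ and add all $k$-subsets of $U$ simultaneously. Then:
\begin{itemize}
\item Every non-pendant edge of $\h$ avoids $W$, so $\h$ is a spanning subgraph of the resulting $\widehat{\h}$.
\item Every pendant edge meets $U$ in exactly one vertex, so $\widehat{\h}$ is connected.
\item By your degree bound $\binom{|U|-1}{k-1}\ge 2$, the pendant edges of $\widehat{\h}$ are exactly $e_1,\dots,e_p$.
\end{itemize}
Lemma~\ref{subgraphlagroot} then gives $\lambda(\h)<\lambda(\widehat{\h})$ unless $\h=\widehat{\h}$. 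This handles connectivity, isolated vertices and isolated pendant edges at once, with no preliminary step. For the same reason, drop the one-edge-at-a-time justification in Step 2 (``every vertex of $U$ already lies in some core edge or is a root''), which fails for isolated vertices, and keep only the all-at-once degree count.

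Finally, your opening sentence lists the steps in the wrong order. Saturation must come first, because Step 3 uses completeness of the core to see that the shift fixes the core and preserves the number of pendant edges. Your steps as actually executed are in the correct order.
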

\begin{proof}
Let $\h$ be a maximizing $k$-graph among all $n$-vertex $k$-graphs with $p$ pendant edges.
Let $e_1,\ldots, e_p$ be all pendant edges of $\h$,
and let $W$ be the $p(k-1)$ degree-one vertices of these pendant edges.
Denote by $\h-W$ the induced subgraph of $\h$ induced by the vertex set $V(\h)\setminus W$, that is, $V(\h-W)=V(\h)\setminus W$ and $E(\h-W)=\{e\in E(\h):e \subseteq V(\h)\setminus W\}$.
We claim that $\h-W$ is a complete $k$-graph of order $n-p(k-1)$.
If not, we add all possible edges to $\h-W$ until it becomes a complete $k$-graph.
Observe that the resultant $k$-graph $\widehat{\h}$ still has $p$ pendant edges and Lemma~\ref{subgraphlagroot} implies that $\lambda(\h)< \lambda(\widehat{\h})$, a contradiction to the maximality of $\h$. The claim follows.

For all $i=1,\ldots,p$,
let $v_i\in e_i$ be the root of $e_i$ in $\h$, that is, $v_i$ is the unique vertex of $e_i$ with degree at least two.
If $v_s \neq v_t$ for some $1\leq s,t\leq p$,
then we consider the $k$-graph $\mathbb{S}_{v_sv_t}(\h)$.
As we claimed before, $\h-W$ is a complete
$k$-graph of order $n-p(k-1)$.
One may check that $\mathbb{S}_{v_sv_t}(\h)-W$ is also a complete $k$-graph of order $n-p(k-1)$, and
$\mathbb{S}_{v_sv_t}(\h)$ has $p$ pendant edges and is not isomorphic to $\h$.
Moreover,
$e_s\cap e_t=\emptyset$ and $\mathbb{S}_{v_sv_t}(e_t)\notin E_\h(v_s)$, so Corollary~\ref{KelmTransHGCor} implies that $\lambda(\mathcal{H})<\lambda(\mathbb{S}_{v_sv_t}(\h)),$
which contradicts the maximality of $\h$.
Thus, $v_i = v_j$ for all $1\leq i,j\leq p$, and the proof is  completed.
\end{proof}

\end{document}